\newtheorem{theorem}{Theorem}
\newtheorem{conjecture}[theorem]{Conjecture}
\newtheorem{problem}[theorem]{Problem}
\newtheorem{lemma}[theorem]{Lemma}
\newtheorem{corollary}[theorem]{Corollary}
\newtheorem{proposition}[theorem]{Proposition}
\newtheorem{thmx}{Theorem}
\newtheorem{probx}[thmx]{Problem}
\theoremstyle{definition}
\newtheorem{remark}[theorem]{Remark}
\newcommand{\tto}{\twoheadrightarrow}
\newcommand{\Z}{\mathbb Z}
\newcommand{\inv}{^{-1}}
\newcommand{\cO}{\mathcal O}
\newcommand{\pd}{\operatorname{proj.dim}}
\newcommand{\wi}{w_0^{\mathfrak{p}}}
\begin{document}

\title[Some homological properties of category $\mathcal{O}$, VI]
{Some homological properties of category $\mathcal{O}$, VI}

\author[H.~Ko, V.~Mazorchuk and R.~Mr{\dj}en]
{Hankyung Ko, Volodymyr Mazorchuk and Rafael Mr{\dj}en}

\begin{abstract}
This paper explores various homological regularity phenomena 
(in the sense of Auslander) in category $\mathcal{O}$ and 
its several variations and generalizations. Additionally, we 
address the problem of determining projective dimension of
twisted and shuffled projective and tilting modules.
\end{abstract}

\maketitle

\setcounter{tocdepth}{1}
\tableofcontents

\section{Introduction, motivation and description of the results}\label{s1}

\subsection{Category $\mathcal{O}$}\label{s1.1}

Let $\mathfrak{g}$ be a semi-simple, finite dimensional Lie algebra over $\mathbb{C}$
with a fixed triangular decomposition
\begin{displaymath}
\mathfrak{g}=\mathfrak{n}_-\oplus \mathfrak{h}\oplus\mathfrak{n}_+. 
\end{displaymath}
Consider the Bernstein-Gelfand-Gelfand (BGG) {\em category $\mathcal{O}$} associated to
this decomposition. Category $\cO$ plays an important role in
modern representation theory and its applications. See e.g., \cite{BGS,Hu,So,Str} 
and references therein. Indecomposable blocks of $\mathcal{O}$
are described by finite dimensional algebras and possess a number of remarkable
symmetries. For example, they have simple preserving duality and exhibit both
Ringel self-duality and Koszul self-duality. See \cite{So,BGS,So2}.

Category $\mathcal{O}$ has a number of interesting sub- and quotient- categories such as the {\em parabolic category $\mathcal{O}$} associated with the choice of
a parabolic subalgebra $\mathfrak{p}$ of $\mathfrak{g}$ (see \cite{RC}) and 
the  {\em $\mathcal{S}$-subcategories in $\mathcal{O}$} associated  with $\mathfrak{p}$ (see \cite{FKM}). The latter categories are also known as the subcategories of 
{\em $\mathfrak{p}$-presentable modules}, see \cite{MS}, and can be
alternatively defined as certain Serre quotients of category $\mathcal{O}$.

\subsection{Auslander regular algebras}\label{s1.2}

A finite dimensional (associative) algebra $A$ is called {\em Auslander-Gorenstein},
see \cite{Iy,CIM}, provided that the (left) regular module ${}_AA$ 
admits a finite injective coresolution
\begin{displaymath}
0\to A\to Q_0\to Q_1\to\dots\to Q_k\to 0, 
\end{displaymath}
such that $\mathrm{proj.dim}(Q_i)\leq i$, for all $i=0,1,\dots,k$.
An Auslander-Gorenstein algebra of finite global dimension is called an
{\em Auslander regular} algebra.

Auslander regular algebras have a number of remarkable homological  properties,
see,  for example, \cite[Theorem~1.1]{Iy} and \cite[Theorem~2.1]{AR}.

We identify properties of algebras with that of their module categories, so,
in an appropriate case, we can say that $A$-mod is Auslander regular, etc.

\subsection{Motivation}\label{s1.3}

This paper originates from a question which the second author received
from Ren{\'e} Marczinzik in July 2020. The question was whether blocks of
category $\mathcal{O}$ are Auslander regular. It was motivated by 
the observations that the answer is positive in small ranks based on
computer calculations using the quiver and relation presentations
of blocks of category $\mathcal{O}$ from \cite{St1}.

\subsection{The main result}\label{s1.4}

The main result of the present paper is the following
statement which, in particular, answers positively 
and vastly generalizes the question posed 
by Ren{\'e} Marczinzik (see Theorem~\ref{thm3}, 
Corollary~\ref{cor6}, Theorem~\ref{thm9}, 
Corollary~\ref{cor10}, Theorem~\ref{thm12} and
Theorem~\ref{thm14}):

\begin{thmx}\label{thmA1}
All blocks of (parabolic) category $\mathcal{O}$ 
are Auslander regular. All blocks of 
$\mathcal{S}$-sub\-ca\-te\-go\-ri\-es in $\mathcal{O}$
are Auslander-Gorenstein.
\end{thmx}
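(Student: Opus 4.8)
\medskip

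The plan is to deduce the Auslander regularity of blocks of (parabolic) category $\mathcal{O}$ from two classical features: the Koszulity (in fact Koszul self-duality) of these blocks \cite{BGS,So2}, and the existence of a simple preserving duality. First I would recall Iyama's and Marczinzik's observation that, for a finite global dimension algebra $A$ equipped with a duality fixing simples, the Auslander condition is equivalent to the \emph{dominant dimension} statement ``$\operatorname{domdim}(A) \geq \operatorname{gldim}(A)$'' together with an interlacing condition on the grades of the simples; but a cleaner route is available in the graded/Koszul setting. The key step is: for a (positively graded) Koszul algebra $A$ with Koszul dual $A^!$, there is a precise relationship between the projective resolution of a simple $A$-module $L$ and the grading of $A^!$, namely $\Ext^i_A(L,L')$ is concentrated in degree $i$. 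This makes the minimal injective coresolution of ${}_AA$ completely transparent in terms of the radical layers of the injective (equivalently, by the duality, projective) modules, so that the bound $\operatorname{proj.dim}(Q_i) \leq i$ becomes a statement purely about how the Loewy length interacts with homological degree — which is exactly what Koszulity provides. For blocks of category $\mathcal{O}$ one then invokes Koszul self-duality: $A \cong A^!$ (Soergel \cite{So2}), which forces the two filtrations (radical filtration of projectives and the ``grade'' filtration coming from $\Ext$) to coincide, and the Auslander condition drops out.

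\medskip

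Next I would treat the parabolic case. Parabolic blocks $\mathcal{O}^{\mathfrak p}$ are also Koszul \cite{BGS}, and their Koszul dual is a \emph{singular} block of ordinary category $\mathcal{O}$ (rather than the same block) — this is the ``Koszul duality exchanges parabolic and singular'' phenomenon. So the argument above must be run slightly more carefully: one shows that the injective coresolution of ${}_AA$ for $A$ the parabolic block algebra has the required projective-dimension bounds by transporting the computation through the Koszul duality to the singular block, where the analogous statement is about a quotient of the regular block and can be checked using the known Kazhdan–Lusztig combinatorics of projective dimensions of structural modules (standard, costandard, tilting), as developed in the earlier papers of this series. The simple preserving duality on $\mathcal{O}^{\mathfrak p}$ guarantees that injective coresolutions and projective resolutions carry the same information, so it suffices to control $\operatorname{proj.dim}$ of the terms, which are sums of projectives of the Koszul-dual block.

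\medskip

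For the $\mathcal{S}$-subcategories the situation is genuinely weaker — we only claim Auslander–Gorenstein, not regular — and the reason is that these categories need not have finite global dimension (they are Serre quotients / subcategories of $\mathcal{O}$, realized as categories of $\mathfrak p$-presentable modules \cite{FKM,MS}, and their defining algebras are in general not Koszul and not of finite global dimension). Here the plan is to use the description of $\mathcal{S}$-subcategory blocks as endomorphism algebras of certain (pro)generators inside $\mathcal{O}$: an $\mathcal{S}$-subcategory block is equivalent to $B\text{-mod}$ where $B = \End_{\mathcal O}(P)$ for a projective $P$ that is a direct summand of a ``big'' projective, and such $B$ inherits from the Auslander regular $A = \End_{\mathcal O}(\text{big } P)$ the Auslander–Gorenstein property via a recollement / idempotent-truncation argument: truncating an Auslander regular algebra by an idempotent preserves the Auslander–Gorenstein condition (one controls the injective coresolution of $eAe$ by applying $e(-)$ to that of $A$ and checking the projective-dimension bounds survive, using that $eAe$-mod is a quotient category of $A$-mod). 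The main obstacle, and the place where the real work sits, is precisely this last point: idempotent truncation does \emph{not} in general preserve Auslander–Gorensteinness, so one must use the specific homological features of category $\mathcal{O}$ — the existence of the duality, the fact that the relevant idempotents are ``stratifying'', and the explicit knowledge of projective dimensions of standard and tilting modules from the earlier parts of this series — to verify that the coresolution obtained by truncation still satisfies $\operatorname{proj.dim}(eQ_i) \leq i$. I expect Koszul self-duality (regular block) and this truncation-survival step ($\mathcal{S}$-subcategories) to be the two hard points; the parabolic and singular cases should then follow by the same mechanism applied through Koszul duality.
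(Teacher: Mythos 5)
Your overall architecture (Koszul self-duality for the regular block, transport through Koszul duality to singular blocks for the parabolic case, and a quotient/truncation argument for the $\mathcal{S}$-subcategories) points in the right general direction, but the central step is missing. For the regular block, Auslander regularity does \emph{not} simply ``drop out'' of Koszul self-duality. Koszulity controls the linear (injective co)resolutions of \emph{simple} modules; the Auslander condition is about the minimal injective coresolution of the regular module ${}_AA$, i.e.\ of the projectives $P_w$, and that coresolution is very far from linear (already for $\mathfrak{sl}_3$ one has $0\to P_e\to I_{w_0}\to I_{w_0}^{\oplus 2}\to\cdots$, with repeated and non-diagonal terms), so no argument matching Loewy layers against homological degree can give the bound $\mathrm{proj.dim}(Q_i)\leq i$. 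What the actual proof needs, and what your sketch nowhere supplies, is the quantitative input $\mathrm{proj.dim}(T_w)=\mathbf{a}(w)$ and $\mathrm{proj.dim}(I_w)=2\mathbf{a}(w_0w)$ in terms of Lusztig's $\mathbf{a}$-function \cite{Ma3,Ma4}, combined in a two-step gluing: first a tilting coresolution of $P_e$ (Ringel--Koszul self-duality shows $T_w$ occurs only in positions $\geq\mathbf{a}(w)$), then injective coresolutions of each tilting summand (Koszul self-duality applied to $T_{w_0}=L_{w_0}$, propagated by projective functors, shows $I_x$ occurs in the coresolution of $T_{w_0y}$ only in positions $\geq\mathbf{a}(w_0x)$ and only when $x\geq_{\mathtt{J}}y$). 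Adding the two estimates gives $\mathbf{a}(w_0y)+\mathbf{a}(w_0x)\geq 2\mathbf{a}(w_0x)=\mathrm{proj.dim}(I_x)$; the cell-order comparison is essential and has no counterpart in your outline. The same gap propagates to your parabolic paragraph, which in addition needs the explicit formulas $\mathrm{proj.dim}(T^{\mathfrak{p}}_x)=\mathbf{a}(w_0^{\mathfrak p}x)-\mathbf{a}(w_0^{\mathfrak p})$ and the $2\mathbf{a}$-shift for injectives from \cite{CM}, not just ``transport through Koszul duality''.

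For the $\mathcal{S}$-subcategories you correctly flag the real obstacle---idempotent truncation does not in general preserve the Auslander--Gorenstein property---but you do not produce the ingredient that overcomes it, so the plan does not close. The paper's mechanism is the admissibility theory of \cite{PW}: the indecomposable projectives (resp.\ injectives, tiltings) of $\mathcal{S}^{\mathfrak p}_0$ are the images of the $P_w$, $I_w$ with $w$ a longest coset representative (resp.\ $T_w$ with $w$ shortest), these objects are admissible, minimal projective resolutions in $\mathcal{O}$ of the admissible tiltings and injectives involve only admissible projectives, and the exact quotient functor $\mathcal{O}_0\twoheadrightarrow\mathcal{S}^{\mathfrak p}_0$ preserves the projective dimensions of exactly these objects. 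With that, the regular and Gorenstein properties are inherited directly from Theorems on $\mathcal{O}_0$; without it, ``duality plus stratifying idempotents'' is a hope, not an argument. Finally, note that the reduction of arbitrary blocks to the integral regular case (Soergel's combinatorial description plus translation to walls) also has to be said explicitly, though that part is routine.
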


The first two papers \cite{Ma3} and \cite{Ma4} of the 
``Some homological properties of category $\mathcal{O}$'' series 
were devoted to the study of projective dimension of structural modules
in category $\mathcal{O}$, with the main emphasis on the projective
dimension of indecomposable tilting and injective modules.
Our proof of Theorem~\ref{thmA1} is heavily based on these results.

\subsection{General setup for similar regularity phenomena}\label{s1.5}

We observe that the condition used to define Auslander-Gorenstein and
Auslander regular algebras makes perfect sense in the general setup of 
(generalized) tilting modules in the sense of Miyashita \cite{Mi}.
Let $A$ be a finite-dimensional algebra and $T$ an $A$-module.
Recall, that $T$ is called a {\em (generalized) tilting module}
provided that it has the following properties:
\begin{itemize}
\item $T$ has finite projective dimension;
\item $T$ is ext-self-orthogonal, that is, 
all extensions of positive degree from $T$ to $T$ vanish;
\item the module ${}_AA$ has a finite coresolution by modules
in $\mathrm{add}(T)$.
\end{itemize}
It is a standard fact that $\mathrm{proj.dim}(T)$ equals the length of 
a minimal coresolution of $A$ by modules from $\mathrm{add}(T)$.

Now, given $A$ and a 
(generalized) tilting $A$-module $T$, we say that
$A$ is $T$-regular provided that there is a coresolution
\begin{displaymath}
0\to A\to Q_0\to Q_1\to\dots\to Q_k\to 0, 
\end{displaymath}
such that $Q_i\in\mathrm{add}(T)$ and $\mathrm{proj.dim}(Q_i)\leq i$, 
for all $i=0,1,\dots,k$.

The notion of an Auslander-Gorenstein algebra corresponds to the situation
when the injective cogenerator is a (generalized) tilting module.

\subsection{Regularity phenomena for various generalized tilting
modules in category $\mathcal{O}$}\label{s1.6}

The bounded derived category of the principal block $\mathcal{O}_0$ of 
category $\mathcal{O}$ admits two different actions, by 
derived equivalences, of the 
braid group associated to $(W,S)$ where $W$ is the Weyl group of $\mathfrak g$ and $S$ the set of simple reflections.
These actions are given by the so-called {\em twisting functors}, 
see \cite{AS,KM}, and {\em shuffling functors}, see \cite{MS}.
These actions can be used to define the following
four classes of (generalized) tilting modules in $\mathcal{O}_0$:
\begin{itemize}
\item twisted projective modules;
\item twisted tilting modules;
\item shuffled projective modules;
\item shuffled tilting modules.
\end{itemize}

In Sections~\ref{s8} and \ref{s9} we explore the regularity phenomena in 
$\mathcal{O}_0$ with respect to these four families of 
(generalized) tilting modules. Each of these families contains
$|W|$ (generalized) tilting modules with some overlap between the families.

\begin{probx}\label{quesregularity}
For which of the above generalized tilting modules the category $\cO_0$ has the regularity property?
\end{probx}

Here is a summary of our results,
see Theorems~\ref{thm8.n1} and \ref{thm8.n21},
Propositions~\ref{prop9.n1} and \ref{prop9.n2}
and Examples in Subsections~\ref{s8.5} and \ref{s9.4}:

\begin{thmx}\label{thmA2}
\hspace{2mm}

\begin{enumerate}[$($a$)$]
\item\label{thmA2.1} The category $\mathcal{O}_0$ has the regularity
property with respect to both projective and tilting modules twisted
by the longest element in a parabolic subgroup of the Weyl group.
\item\label{thmA2.2} The category $\mathcal{O}_0$ has the regularity
property with respect to both projective and tilting modules shuffled
by a simple reflection.
\item\label{thmA2.3} There exist both twisted and shuffled
projective and tilting modules, with respect to which 
the category $\mathcal{O}_0$ does not have the regularity
property.
\end{enumerate}
\end{thmx}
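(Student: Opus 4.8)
The plan is to establish the three parts separately: parts~(a) and~(b) are positive statements, proved by exhibiting, for the relevant generalized tilting module $T$, a coresolution $0\to A\to Q_0\to\dots\to Q_k\to 0$ of the regular module ${}_AA$ with $Q_i\in\operatorname{add}(T)$ and $\operatorname{proj.dim}(Q_i)\le i$ (here $A$ denotes the basic algebra of $\mathcal{O}_0$, which is Koszul and Ringel self-dual), whereas part~(c) amounts to producing explicit counterexamples in small rank. Throughout I would use that projective and tilting modules carry $\Delta$-flags and are therefore acyclic for the twisting functors, so that $\bigoplus_x T_{w_0^{\mathfrak{p}}}P(x)$ and $\bigoplus_x T_{w_0^{\mathfrak{p}}}T(x)$ are generalized tilting; that shuffled projectives and shuffled tiltings $C_sP(x)$, $C_sT(x)$ are quotients of the projective, respectively tilting, modules $\theta_sP(x)$, $\theta_sT(x)$ (here $\theta_s$ is translation through the $s$-wall and $C_s$ the shuffling functor, see~\cite{MS}), and hence have projective dimension at most $1$; and the projective-dimension formulas for indecomposable twisted and shuffled projectives and tiltings, building on \cite{Ma3,Ma4} and the earlier sections of the paper.

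For part~(a), the key is that $T_{w_0^{\mathfrak{p}}}$ lifts to a functor on graded category $\mathcal{O}_0$, so the generalized tilting modules $T_{w_0^{\mathfrak{p}}}P$ and $T_{w_0^{\mathfrak{p}}}T$ are graded. Since $T_{w_0^{\mathfrak{p}}}P$ is generalized tilting, the minimal $\operatorname{add}(T_{w_0^{\mathfrak{p}}}P)$-coresolution of $A$ is finite and graded-minimal; I would then establish --- this is the crux --- a precise compatibility between the projective dimension of each indecomposable summand $T_{w_0^{\mathfrak{p}}}P(x)$ and the internal degrees in which it is generated, so that positivity of the Koszul grading of $A$ forces a summand of projective dimension $d$ to occur only at homological position $\ge d$ in the coresolution, which is exactly $\operatorname{proj.dim}(Q_i)\le i$; the module $T_{w_0^{\mathfrak{p}}}T$ is then treated by the same argument or via Ringel duality. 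The special role of $w_0^{\mathfrak{p}}$, as opposed to an arbitrary Weyl group element, enters precisely in this compatibility, through the length-additivity properties of $w_0^{\mathfrak{p}}$ and the consequent good behaviour of $T_{w_0^{\mathfrak{p}}}$ on $\Delta$-flags.

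For part~(b), the projective-dimension bound $\operatorname{proj.dim}(C_sP(x))\le 1$ (and $\operatorname{proj.dim}(C_sT(x))\le 1$) already shows that, if $C_sP$ is generalized tilting, the minimal $\operatorname{add}(C_sP)$-coresolution of $A$ has length at most $1$, say $0\to A\to Q_0\to Q_1\to 0$, and $\operatorname{proj.dim}(Q_1)\le 1$ is automatic, so the whole problem reduces to showing that $Q_0$ can be chosen projective --- equivalently, that the degree-zero term of a length-$\le 1$ coresolution of $A$ by shuffled projectives lies in the additive closure of the projective ones among them. I would carry this out by a detailed analysis of the indecomposable shuffled projectives and shuffled tiltings: determining which of them are projective, describing the left $\operatorname{add}(C_sP)$- and $\operatorname{add}(C_sT)$-approximations of $A$ (using the defining exact sequences relating $C_s$, $\theta_s$ and the identity), and checking that the approximation can be taken inside the projective summands. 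This structural analysis is where the real work for part~(b) lies.

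For part~(c), I would look for failures already in small rank, for example in types $A_2$, $A_3$ or $B_2$. The strategy: fix a twisting element $w$ that is \emph{not} the longest element of a parabolic subgroup, respectively a shuffling element $w$ that is not a simple reflection, form the generalized tilting module $T_wP$ or $C_wP$ (and the tilting analogues), determine its indecomposable summands and their projective dimensions, and observe that the minimal $\operatorname{add}(T_wP)$-coresolution of $A$ --- whose shape is forced by those data --- must contain a term $Q_i$ with $\operatorname{proj.dim}(Q_i)>i$: the obstruction appears when a summand of large projective dimension is forced into an early homological position because the summands available in lower degrees do not suffice to cover $A$ together with its first cokernels. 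This is a finite computation once the projective-dimension data are in hand. The principal obstacle throughout is the homological bookkeeping in parts~(a) and~(b) --- matching the homological position of each term of the coresolution with the bound on its projective dimension --- which is not formal and relies on the explicit projective-dimension formulas together with the graded structure governing how the twisting and shuffling functors act on resolutions.
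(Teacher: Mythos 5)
Your proposal has two genuine problems. First, in part (b) the tilting half of your argument rests on the claim that $\mathrm{proj.dim}(\mathrm{C}_sT_x)\leq 1$ because $\mathrm{C}_sT_x$ is a quotient of $\theta_sT_x$. This is false: the kernel of $\theta_sT_x\twoheadrightarrow \mathrm{C}_sT_x$ is $T_x$, which is tilting but not projective, so the short exact sequence $0\to T_x\to\theta_sT_x\to\mathrm{C}_sT_x\to 0$ only gives $\mathrm{proj.dim}(\mathrm{C}_sT_x)\leq\mathrm{proj.dim}(T_x)+1=\mathbf{a}(x)+1$; already in type $A_2$ one has $\mathrm{proj.dim}(\mathrm{C}_sT_{w_0})=4$. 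Consequently the $\mathrm{add}(\mathrm{C}_sT)$-coresolution of $A$ is not of length at most $1$ (in type $A_2$ the coresolution of $P_e$ has length $4$), and your reduction of the tilting case to ``choose $Q_0$ projective'' collapses. The correct route here is to take the minimal tilting coresolution of $P$, which is regular by the Auslander--Ringel regularity of $\mathcal{O}_0$, and splice into it the sequences $0\to T_x\to\theta_sT_x\to\mathrm{C}_sT_x\to 0$ for the summands with $xs<x$, using that $\theta_s$ is exact (so does not raise projective dimension) and that the summands $T_y$ of $\theta_sT_x$ satisfy $ys>y$, hence $\mathrm{C}_sT_y\cong T_y$. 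For the projective half of (b) your reduction is sound, but the ``real work'' you defer is exactly the content: it is settled immediately by the same kind of sequence $0\to P_x\to\theta_sP_x\to\mathrm{C}_sP_x\to 0$, whose middle term is projective and lies in $\mathrm{add}(\mathrm{C}_sP)$ because its summands $P_y$ have $ys<y$.

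Second, in part (a) the step you yourself call the crux --- that Koszul positivity forces a summand of $\mathrm{add}(\top_{w_0^{\mathfrak{p}}}P)$ of projective dimension $d$ into homological position at least $d$ --- is asserted, not proved, and it is not clear it can be proved along the lines you indicate: the relevant coresolutions are in general neither linear nor controlled by the grading in the way your sketch requires. The missing idea is parabolic induction from the Levi factor $\mathfrak{l}$ of $\mathfrak{p}$: twisting by $w_0^{\mathfrak{p}}$ commutes with parabolic induction and with projective functors, so it suffices to coresolve $\Delta_e$; parabolic induction sends the tilting (resp.\ injective) modules of $\mathcal{O}_0(\mathfrak{l})$ to the $w_0^{\mathfrak{p}}$-twisted projectives (resp.\ twisted tiltings) of $\mathcal{O}_0(\mathfrak{g})$, and it transports the regular coresolutions furnished by Auslander--Ringel regularity (for twisted projectives) and Auslander regularity (for twisted tiltings) of $\mathcal{O}_0(\mathfrak{l})$ to the desired coresolutions of $\Delta_e$. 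Note also that the two cases in (a) are not interchangeable ``by the same argument or Ringel duality'': one needs the tilting coresolution over $\mathfrak{l}$ for twisted projectives and the injective coresolution over $\mathfrak{l}$ for twisted tiltings. Your plan for part (c) (explicit small-rank counterexamples, e.g.\ $\top_{ts}P$, $\top_{ts}T$, $\mathrm{C}_{st}P$, $\mathrm{C}_{st}T$ in type $A_2$) matches what is actually done, though to rule out regularity you must argue that \emph{no} admissible coresolution exists (e.g.\ via socle obstructions on the forced cokernels), not merely inspect the minimal one.
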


\subsection{Projective dimension of twisted and shuffled projective
and tilting modules}\label{s1.7}

Theorem~\ref{thmA2} suggests that a complete answer to Problem~\ref{quesregularity} is non-trivial.
One important step here is the following problem.

\begin{probx}\label{prodim}
Determine the projective dimensions of twisted and shuffled projective and tilting modules in $\cO_0$. 
\end{probx}
We explore Problem \ref{prodim} in Section~\ref{s10}. 
Since twisted projective modules coincide with translated
Verma modules, while twisted tilting modules coincide with 
translated dual Verma modules,  
Problem \ref{prodim} provides a nice connection to the more recent
papers \cite{CM,KMM} in the ``Some homological properties of 
category $\mathcal{O}$'' series.
One of the main results of \cite{KMM}
determines projective dimension of translated simple modules in 
$\mathcal{O}$. In Section~\ref{s10} we propose conjectures
for projective dimension of twisted and shuffled projective
and tilting modules in the spirit of the results of \cite{KMM}
and prove a number of
partial results. All these conjectures and results are
formulated in terms of Kazhdan-Lusztig combinatorics, namely,
Lusztig's $\mathfrak{a}$-function from \cite{Lu1,Lu2}
and its various generalizations
studied in \cite{CM} and \cite{KMM}. The case of shuffled modules
seems at the moment to be significantly more difficult than the case
of twisted modules. The main reason for this is the fact that,
in contrast to twisting functors, shuffling functors do not commute
with projective functors.

\subsection*{Acknowledgments}

This research was partially supported by the Swedish Research Council, 
G{\"o}ran Gustafsson Stiftelse and Vergstiftelsen. The third author 
was also partially supported by the QuantiXLie Center of Excellence 
grant no. KK.01.1.1.01.0004 funded by the European Regional Development Fund.

We are especially indebted to Ren{\'e} Marczinzik for the question about Auslander 
regularity of $\mathcal{O}$, which started the research presented in this paper,
and also for his comments on the preliminary version of the manuscript.

\section{Auslander-Ringel regular quasi-hereditary algebras}\label{s2}

\subsection{Quasi-hereditary algebras}\label{s2.1}

Let $\Bbbk$ be an algebraically closed field and $A$ a finite dimensional
(associative) $\Bbbk$-algebra. Let $L_1,L_2,\dots,L_n$ be a complete and irredundant
list of isomorphism classes of simple $A$-modules. Note that, by fixing this list,
we have fixed a linear order on the isomorphism classes of simple $A$-modules, this will
be an essential part of the structure we are going to define now.

For $i\in\{1,2,\dots,n\}$, we denote by $P_i$ and $I_i$ the indecomposable
projective cover and injective envelope of $L_i$, respectively. Denote by
$\Delta_i$ the quotient of $P_i$ by the trace in $P_i$ of all 
$P_j$ with $j>i$. Denote by
$\nabla_i$ the submodule of $I_i$ defined as the intersection of 
the kernels of all homomorphisms from $I_i$ to $I_j$ with $j>i$. 
The modules $\Delta_i$ are called {\em standard} and the modules
$\nabla_i$ are called {\em costandard}.

Recall from \cite{CPS,DR}, that $A$ is said to be {\em quasi-hereditary}
provided that
\begin{itemize}
\item the endomorphism algebra of each $\Delta_i$ is $\Bbbk$;
\item the regular module ${}_AA$ has a filtration with standard subquotients.
\end{itemize}
According to \cite{Ri}, if  $A$ is quasi-hereditary, then, for each $i$,
there is a unique indecomposable module $T_i$, called a {\em tilting module},
which has both, a filtration with standard subquotients and a filtration with
costandard subquotients, and, additionally, such that $[T_i:L_i]\neq 0$
while $[T_i:L_j]=0$ for $j>i$. The module 
$\displaystyle T=\bigoplus_{i=1}^n T_i$ is called the
{\em characteristic tilting module} and (the opposite of) 
its endomorphism algebra is called the {\em Ringel dual} of $A$.

For each $M\in A$-mod, there is a unique minimal finite complex
$\mathcal{T}_\bullet(M)$ of tilting modules which is isomorphic to $M$
in the bounded derived category of $A$. We will denote by $\mathbf{r}(M)$
the maximal non-negative $i$ such that $\mathcal{T}_i(M)\neq 0$ and by 
$\mathbf{l}(M)$ the maximal non-negative $i$ such that $\mathcal{T}_{-i}(M)\neq 0$.
Note that $\mathbf{l}(M)=0$ if and only if $M$ has a filtration with standard
subquotients and $\mathbf{r}(M)=0$ if and only if $M$ has a filtration with costandard
subquotients.
We refer to \cite{MO2} for further details.

\subsection{Auslander-Ringel regular algebras}\label{s2.2}

We say that a quasi-hereditary algebra $A$ is {\em Auslander-Ringel regular} 
provided that there is a coresolution
\begin{displaymath}
0\to A\to Q_0\to Q_1\to\dots\to Q_k\to 0, 
\end{displaymath}
such that each $Q_i\in\mathrm{add}(T)$ and $\mathrm{proj.dim}(Q_i)\leq i$, 
for all $i=0,1,\dots,k$. 
Note that, being quasi-hereditary, $A$ has  finite global dimension (see \cite{CPS,DR}) and that the characteristic tilting module is a
(generalized) tilting module. Thus, Auslander-Ringel regularity corresponds to
$T$-regularity in the terminology of Subsection~\ref{s1.5}.

In Section~\ref{s3}, we will see that blocks of (parabolic) 
BGG category $\mathcal{O}$ are Auslander-Ringel regular.

\section{Regularity phenomena in category $\mathcal{O}$}\label{s3}

\subsection{Category $\mathcal{O}$}\label{s3.1}

We refer the reader to \cite{Hu} for details and generalities about
category $\mathcal{O}$.

We denote by $\mathcal{O}_0$ the principal block of $\mathcal{O}$, that is, the
indecomposable direct summand of $\cO$ containing the trivial $\mathfrak{g}$-module.
The simple modules in $\mathcal{O}_0$ are simple highest weight
modules, and their isomorphism classes are naturally indexed by elements of the Weyl group $W$. 
For $w\in W$, we denote by $L_w$ the simple highest weight module
in $\mathcal{O}_0$ with highest weight $w\cdot 0$, where $0$ is the zero
element in $\mathfrak{h}^*$ and $\cdot$ is the dot action of $W$.

We denote by $P_w$ and $I_w$ the indecomposable projective cover and 
injective envelope of $L_w$ in $\mathcal{O}_0$, respectively. 
Let $A$ be a basic, finite dimensional, associative algebra such that
$\mathcal{O}_0$ is equivalent to $A$-mod. It is well-known that $A$
is quasi-hereditary with respect to any linear order which extends the
dominance order on weights. The latter is given by $\lambda\leq \mu$ if and only if
$\mu-\lambda$ is a linear combination of positive roots with 
non-negative integer coefficients.

By \cite{So}, the algebra $A$ admits a Koszul
$\mathbb{Z}$-grading.  We denote by ${}^{\mathbb{Z}}\mathcal{O}_0$
the category of $\mathbb{Z}$-graded finite-dimensional 
$A$-modules. We denote by
$\langle 1\rangle$ the shift of grading which maps degree $0$
to degree $-1$. We fix standard graded lifts of structural modules so that
\begin{itemize}
\item $L_w$ is concentrated in degree zero;
\item the top of $P_w$ is concentrated in degree zero;
\item the socle of $I_w$ is concentrated in degree zero;
\item the top of $\Delta_w$ is concentrated in degree zero;
\item the socle of $\nabla_w$ is concentrated in degree zero;
\item the canonical map $\Delta_w\hookrightarrow T_w$ is 
homogeneous of degree zero.
\end{itemize}

For $w\in W$, we denote by $\theta_w$ the indecomposable projective
endofunctor of $\mathcal{O}_0$, see \cite{BG}, uniquely defined by
the property $\theta_w P_e\cong P_w$. By \cite{St2}, $\theta_w$
admits a natural graded lift normalized by the same condition.

We denote by $\geq_{\mathtt{L}}$, $\geq_{\mathtt{R}}$
and $\geq_{\mathtt{J}}$ the Kazhdan-Luszitg left, right and two-sided
orders, respectively.

\subsection{$\mathcal{O}_0$ is Auslander-Ringel regular}\label{s3.2}

\begin{theorem}\label{thm1}
The category  $\mathcal{O}_0$ is Auslander-Ringel regular.
\end{theorem}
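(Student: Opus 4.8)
The plan is to build the required coresolution of ${}_AA$ by tilting modules degree by degree, using the homological information about indecomposable projective and tilting modules in $\cO_0$ developed in the earlier papers \cite{Ma3,Ma4} of the series, together with the standard combinatorics of $\cO_0$. The key observation is that a projective module $P_w$ has a costandard (i.e.\ $\nabla$-) filtration, so it is automatically a direct summand of a tilting module; more precisely, since $P_w$ is projective it has a $\Delta$-filtration, and being projective in $\cO_0$ it also has a $\nabla$-filtration, hence $P_w$ is itself a (partial) tilting module, and thus $\operatorname{add}(P)\subseteq\operatorname{add}(T)$. Consequently the algebra $A={}_AA$ is already in $\operatorname{add}(T)$, and one could try to take the trivial coresolution $0\to A\to A\to 0$; the content of the theorem is the extra projective-dimension constraint $\operatorname{proj.dim}(Q_i)\le i$, which the trivial resolution does not meet unless $\operatorname{proj.dim}(A)=0$.

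So the real work is to produce a genuinely nontrivial tilting coresolution meeting the degree bounds. First I would recall (from \cite{Ma3,Ma4}) the formula, or at least the needed upper bound, for $\operatorname{proj.dim}(T_w)$ in terms of the Bruhat/Kazhdan--Lusztig combinatorics of $w$, and similarly control $\mathbf{r}(P_w)$, the top nonzero degree of the minimal tilting complex $\mathcal{T}_\bullet(P_w)$. Second, I would dualize: applying the simple-preserving duality $\star$ of $\cO_0$ sends tilting modules to tilting modules and turns a tilting \emph{coresolution} of the injective cogenerator into a tilting \emph{resolution}; alternatively, one can pass through Ringel duality, under which $\cO_0$ is (essentially) Ringel self-dual, so that projectives correspond to tilting modules and a tilting coresolution of $A$ on one side corresponds to a projective resolution of $T$ on the other. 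The cleanest route is probably: take, for each $w$, the minimal tilting complex $\mathcal{T}_\bullet(I_w)$ resolving the indecomposable injective $I_w$; because $I_w$ has a $\nabla$-filtration this complex lives in non-negative cohomological degrees, i.e.\ it is an honest coresolution $0\to I_w\to T^{(0)}\to T^{(1)}\to\cdots$, and then splice these together over a minimal injective coresolution of ${}_AA$. Third, I would verify the degree estimate $\operatorname{proj.dim}(T^{(i)})\le i$ by combining the known value of $\operatorname{proj.dim}$ on indecomposable tiltings with the combinatorial fact that the tiltings appearing in homological degree $i$ of $\mathcal{T}_\bullet(I_w)$ are indexed by elements at ``distance $i$'' from $w$ in the relevant order, whose projective dimension is correspondingly bounded. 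This is exactly the kind of bookkeeping that the results quoted in Subsection~\ref{s1.7} and in \cite{Ma3,Ma4} are designed to support.

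The main obstacle, I expect, is the compatibility between the two filtrations: one must splice the local tilting coresolutions of the injectives $I_w$ appearing in a fixed injective coresolution $0\to A\to \bigoplus I^{(0)}\to\bigoplus I^{(1)}\to\cdots$ of ${}_AA$ into a single tilting coresolution of $A$ \emph{without} the cohomological degrees adding up in the wrong way, and to check that the projective-dimension bound is preserved under this splicing rather than merely under each piece. Concretely, if $I_w$ sits in injective-degree $j$ and $T_x$ appears in $\mathcal{T}_\bullet(I_w)$ in degree $i$, then $T_x$ contributes to the total complex in degree $i+j$, and one needs $\operatorname{proj.dim}(T_x)\le i+j$; this should follow from $\operatorname{proj.dim}(T_x)\le \operatorname{proj.dim}(I_w)$ whenever $T_x$ appears in a $\nabla$-filtration-compatible tilting coresolution of $I_w$ after at most $i$ steps, plus $\operatorname{proj.dim}(I_w)\le j$ from the minimality of the chosen injective coresolution (the latter is a general property of quasi-hereditary algebras with a suitable ``Auslander'' feature, and in $\cO_0$ follows from \cite{Ma3,Ma4}). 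Once these two inequalities are in place the total complex is the desired $Q_\bullet$, and the theorem follows; I would also remark that the same argument, run with parabolic standard/costandard objects, yields the parabolic case, and run with the $\cS$-subcategory analogues yields the Auslander--Gorenstein (finite global dimension may fail there) statement of Theorem~\ref{thmA1}.
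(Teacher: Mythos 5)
Your proposal contains several genuine errors that break the argument. First, projectives in $\mathcal{O}_0$ do \emph{not} in general have $\nabla$-filtrations (only the projective-injective ones do), so it is false that $\operatorname{add}(P)\subseteq\operatorname{add}(T)$; already for $\mathfrak{sl}_2$ the dominant Verma $P_e=\Delta_e$ is not a tilting module. Second, and more seriously for your ``cleanest route'': an injective $I_w$ admits a tilting \emph{resolution}, not a tilting coresolution --- a $\nabla$-flag forces the minimal tilting complex of $I_w$ into non-positive homological degrees ($\mathbf{r}(I_w)=0$), and indeed for $\mathfrak{sl}_2$ the module $I_e=\nabla_e$, whose socle is $L_e$, cannot be embedded into any tilting module (the tiltings there are $T_e=P_s$, with socle $L_s$, and $T_s=L_s$). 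So the coresolutions $0\to I_w\to T^{(0)}\to T^{(1)}\to\cdots$ you want to splice do not exist, and splicing the objects that do exist (tilting resolutions of the $I^{(j)}$) over an injective coresolution of ${}_AA$ produces a complex of tiltings spread over both negative and positive degrees, not a coresolution of $A$. Finally, the inequality $\operatorname{proj.dim}(I_w)\leq j$ for an injective sitting in position $j$ of a minimal injective coresolution of ${}_AA$ is neither in \cite{Ma3,Ma4} nor a formal consequence of minimality: it is exactly the statement that $\mathcal{O}_0$ is Auslander regular (Theorem~\ref{thm3}), which the paper deduces \emph{from} Theorem~\ref{thm1}; invoking it here makes your argument circular.

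What is missing are the two ideas the paper actually uses. First, a reduction: it suffices to produce a regular tilting coresolution of the dominant projective $P_e=\Delta_e$, because applying the projective functor $\theta_x$ is exact, sends tiltings to tiltings and projectives to projectives (hence does not increase projective dimension), and so transports a regular coresolution of $P_e$ to one of $P_x$ for every $x\in W$; this replaces your attempt to control each $P_w$ or $I_w$ separately. Second, the degree bound for $P_e$ comes from linearity and Koszul combinatorics, not from a vague ``distance $i$'' heuristic: since $A$ is balanced, the minimal tilting coresolution $\mathcal{T}_\bullet(P_e)$ is a linear complex of tiltings, and Ringel--Koszul self-duality identifies the multiplicity of $T_w\langle i\rangle$ in $\mathcal{T}_i(P_e)$ with a composition multiplicity in $\Delta_e$, which is nonzero only for $\mathbf{a}(w)\leq i\leq\ell(w)$; combined with $\operatorname{proj.dim}(T_w)=\mathbf{a}(w)$ from \cite{Ma3,Ma4}, this gives $\operatorname{proj.dim}\mathcal{T}_i(P_e)\leq i$. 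Without the reduction via projective functors and the $\mathbf{a}$-function control coming from the category of linear complexes of tilting modules, the bookkeeping you describe cannot be carried out.
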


\begin{proof}
Consider the category $\mathcal{LT}(\mathcal{O}_0)$ of linear complexes of
tilting modules in $\mathcal{O}_0$, see \cite{Ma,MO}. The algebra $A$ 
is a balanced quasi-hereditary algebra in the sense of \cite{Ma2} and hence
$\mathcal{LT}(\mathcal{O}_0)$ contains the tilting coresolution 
$\mathcal{T}_\bullet(P_e)$ of the dominant standard module
$\Delta_e=P_e$. 

Due to the Ringel-Koszul self-duality of $\mathcal{O}_0$, 
the category $\mathcal{LT}(\mathcal{O}_0)$ is equivalent to 
${}^{\mathbb{Z}}\mathcal{O}_0$. This implies that the multiplicity
of $T_w\langle i\rangle$ as a summand of $\mathcal{T}_i(P_e)$ coincides with the
composition multiplicity of $L_{w_0w\inv w_0}\langle -i\rangle$ in $\Delta_e$. The latter 
is given by Kazhdan-Lusztig combinatorics for $(W,S)$. In particular, it is non-zero only
if 
\begin{displaymath}
\mathbf{a}(w)=\mathbf{a}(w_0w\inv w_0)\leq i\leq\ell(w_0w\inv w_0)=\ell(w), 
\end{displaymath}
where $\ell(w)$ is the length of 
$w$ and $\mathbf{a}$ is Lusztig's $\mathbf{a}$-function from \cite{Lu1,Lu2}.

Consequently, $T_w$ can appear (up to shift of grading) only in homological positions
$i$ such that $\mathbf{a}(w)\leq i\leq\ell(w)$. Taking into account that
$\mathrm{proj.dim.}(T_w)=\mathbf{a}(w)$ by \cite{Ma3,Ma4}, it follows that 
$\mathcal{T}_i(P_e)$ has projective dimension at most $i$.

For $x\in W$, applying $\theta_x$ to $\mathcal{T}_\bullet(P_e)$ gives a
tilting coresolution of $P_x$ (not necessarily minimal or linear). 
Since $\theta_x$ is exact and sends projectives to projectives, it cannot
increase the projective dimension. This means that
\begin{displaymath}
\mathrm{proj.dim.}(\theta_x\mathcal{T}_i(P_e))\leq
\mathrm{proj.dim.}(\mathcal{T}_i(P_e))\leq i.
\end{displaymath}
The claim of the theorem follows.
\end{proof}

\begin{corollary}\label{cor2}
{\hspace{1mm}}

\begin{enumerate}[$($i$)$]
\item\label{cor2.1} Let $\mathcal{P}_\bullet(T)$ be a minimal projective resolution
of $T$. Then $\mathbf{r}(\mathcal{P}_{-i}(T))\leq i$, for all $i\geq 0$.
\item\label{cor2.2} Let $\mathcal{T}_\bullet(I)$ be a minimal tilting resolution
of the basic injective cogenerator $I$. Then we have $\mathrm{inj.dim.}(\mathcal{T}_{-i}(I))\leq i$, 
for all $i\geq 0$.
\item\label{cor2.3} Let $\mathcal{I}_\bullet(T)$ be a minimal injective coresolution
of $T$. Then $\mathbf{l}(\mathcal{I}_i(T))\leq i$, for all $i\geq 0$.
\end{enumerate}
\end{corollary}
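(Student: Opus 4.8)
\textbf{Proof proposal for Corollary~\ref{cor2}.}
The plan is to obtain all three statements from Theorem~\ref{thm1} by applying the standard dualities and (co)dualities available on $\mathcal{O}_0$, rather than reproving anything from scratch. The point is that Auslander-Ringel regularity of $A$ is a statement about a distinguished $\Add(T)$-coresolution of ${}_AA$, and each of (i)--(iii) is exactly what this statement translates into after applying, respectively, the Ringel dual equivalence, the simple-preserving duality, and a combination of the two.

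For part (\ref{cor2.1}), I would pass to the Ringel dual $A'=\End_A(T)^{\mathrm{op}}$; since $\mathcal{O}_0$ is Ringel self-dual (as recalled in Subsection~\ref{s1.1}), $A'\cong A$, and the Ringel duality functor $\mathcal{R}=\RHom_A(T,-)$ is a derived equivalence which sends the characteristic tilting module $T$ to the regular module (projectives on one side correspond to tilting modules on the other, and tilting modules correspond to injectives). Under $\mathcal{R}$, the minimal tilting complex $\mathcal{T}_\bullet(M)$ of an object $M$ becomes the minimal projective complex of $\mathcal{R}(M)$ up to the usual homological reindexing $i\mapsto -i$ dictated by the conventions for $\mathbf{r}$ and $\mathbf{l}$; in particular $\mathbf{r}(M)$ on the $A$-side becomes the length of the projective resolution, i.e.\ $\mathrm{proj.dim}$, on the $A'$-side, and $\mathrm{proj.dim}$ becomes $\mathbf{r}$. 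So the inequality ``$\mathcal{T}_i(P_e)\in\Add(T)$ sits in homological degrees with $\mathrm{proj.dim}\le i$'' from the proof of Theorem~\ref{thm1}, transported through $\mathcal{R}$ and along all $\theta_x$, becomes precisely ``$\mathcal{P}_{-i}(T)\in\Add(P)=\operatorname{proj}$ has $\mathbf{r}\le i$''. I would phrase this cleanly by noting that $\mathbf{r}(\mathcal{P}_{-i}(T))$ equals the length of a minimal tilting coresolution of that projective, and that the tilting coresolution of a projective obtained by applying $\theta_x$ to $\mathcal{T}_\bullet(P_e)$ has length $\le \ell(\text{top degree of }\mathcal{T}_\bullet(P_e)\text{ supporting }P_x)$, which is bounded as in Theorem~\ref{thm1}.

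Part (\ref{cor2.3}) then follows from part (\ref{cor2.1}) by applying the simple-preserving duality $\mathsf{d}$ on $\mathcal{O}_0$: this is a contravariant exact equivalence fixing simples, hence interchanging projectives and injectives and fixing each tilting module $T_w$ (as $T_w$ is self-dual). A contravariant equivalence turns a minimal projective resolution of $T$ into a minimal injective coresolution of $\mathsf{d}(T)\cong T$, reversing homological degree, and turns the invariant $\mathbf{r}$ (maximal positive tilting degree) into $\mathbf{l}$ (maximal negative tilting degree). Thus ``$\mathbf{r}(\mathcal{P}_{-i}(T))\le i$'' becomes ``$\mathbf{l}(\mathcal{I}_i(T))\le i$'', which is (\ref{cor2.3}). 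Finally, part (\ref{cor2.2}) is the Ringel dual of (\ref{cor2.3}) (equivalently, apply $\mathsf{d}$ to Theorem~\ref{thm1} directly): under $\mathcal{R}$ the injective cogenerator $I$ corresponds to the characteristic tilting module, a minimal tilting resolution corresponds to a minimal projective-into-tilting resolution with degrees reversed, and $\mathrm{inj.dim}$ corresponds to $\mathrm{proj.dim}$ via the same duality as in (\ref{cor2.1}); the bound $\mathrm{proj.dim}(\mathcal{T}_i(P_e))\le i$ from Theorem~\ref{thm1} becomes $\mathrm{inj.dim}(\mathcal{T}_{-i}(I))\le i$.

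The main obstacle I anticipate is purely bookkeeping rather than mathematical: one must be careful and consistent about how the homological index is reflected under Ringel duality versus under the contravariant duality $\mathsf{d}$, and about the sign conventions built into $\mathbf{r}(M)$ and $\mathbf{l}(M)$ (which count tilting complex terms in positive versus negative degree) so that, e.g., a \emph{resolution} on one side genuinely corresponds to a \emph{coresolution} on the other with the stated shift. Once the dictionary ``$(\mathcal{R},\mathsf{d})$ swaps $(\operatorname{proj},\operatorname{tilt},\operatorname{inj})$ and swaps $(\mathrm{proj.dim},\mathbf{r},?)$ appropriately, reversing homological degree'' is spelled out once, all three parts drop out in parallel, and no new computation beyond what is already in the proof of Theorem~\ref{thm1} is needed.
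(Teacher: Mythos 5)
Your proposal is correct and follows essentially the same route as the paper: the paper likewise deduces all three parts from Theorem~\ref{thm1} by combining the simple-preserving duality on $\mathcal{O}_0$ with Ringel self-duality (it obtains (\ref{cor2.2}) by duality from Theorem~\ref{thm1}, (\ref{cor2.1}) as the Ringel dual of Theorem~\ref{thm1}, and (\ref{cor2.3}) as the Ringel dual of (\ref{cor2.2})). Your derivation merely permutes the order in which the two dualities are applied, and your explicit dictionary (swapping projective/tilting/injective and $\mathrm{proj.dim}$, $\mathbf{r}$, $\mathbf{l}$, $\mathrm{inj.dim}$ with the homological reindexing) is exactly the bookkeeping the paper leaves implicit.
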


\begin{proof}
Claim~\ref{cor2.2} is obtained from Theorem~\ref{thm1} using the simple
preserving duality on $\mathcal{O}$. Since $\mathcal{O}_0$ is Ringel self-dual,
Claim ~\ref{cor2.1} is the Ringel dual of Theorem~\ref{thm1} and, finally,
Claim ~\ref{cor2.3} is the Ringel dual of Claim~\ref{cor2.2}.
\end{proof}

\subsection{$\mathcal{O}_0$ is Auslander regular}\label{s3.3}

\begin{theorem}\label{thm3}
The category $\mathcal{O}_0$ is Auslander regular. 
\end{theorem}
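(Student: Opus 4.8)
The plan is to deduce Auslander regularity from Auslander-Ringel regularity (Theorem~\ref{thm1}) by replacing the characteristic tilting module $T$ with the injective cogenerator $I$ in the defining coresolution. The key observation is that $\mathcal{O}_0$ has finite global dimension (it is quasi-hereditary), so what remains is to produce a finite injective coresolution $0\to A\to Q_0\to Q_1\to\dots\to Q_k\to 0$ of ${}_AA$ with $\mathrm{proj.dim}(Q_i)\leq i$ for all $i$. First I would recall that in category $\mathcal{O}_0$ every tilting module has a costandard filtration, hence a $\nabla$-filtration, and costandard modules embed into injectives; more to the point, I would use that projective-injective modules exist and, crucially, that the tilting modules appearing in $\mathcal{T}_\bullet(P_e)$ can be ``pushed'' one step further into an injective coresolution.

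The cleanest route is via an inductive splicing argument. Starting from the tilting coresolution of Theorem~\ref{thm1} for each $P_x$ (equivalently for $A=\bigoplus_x P_x$), I would resolve each tilting term by injectives. Concretely, each $Q_i\in\mathrm{add}(T)$ has an injective coresolution whose terms, by Corollary~\ref{cor2}\ref{cor2.3} (the statement $\mathbf{l}(\mathcal{I}_j(T))\leq j$), control the tilting-complex length of the injective terms; combining this with the bound $\mathrm{proj.dim}(Q_i)\le i$ and the fact that $\mathrm{proj.dim}$ of an injective $I_w$ is governed by the same Kazhdan--Lusztig combinatorics, one assembles a total complex that is an injective coresolution of $A$ with the required projective-dimension estimates. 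Alternatively, and perhaps more transparently, one invokes the known formula $\mathrm{proj.dim}(I_w) = 2\,\mathbf{a}(w)$ (or the relevant bound from \cite{Ma3,Ma4}) together with the grading: an injective $I_w\langle i\rangle$ can only appear in the minimal injective coresolution of $A$ in homological degrees $i$ with a matching lower bound coming from $\mathbf{a}(w)$, and these two constraints force $\mathrm{proj.dim}(Q_i)\leq i$.

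The main obstacle I anticipate is bookkeeping the homological degrees and grading shifts correctly: one must verify that the injective $I_w\langle j\rangle$ occurring in homological position $i$ of the minimal injective coresolution of $A$ satisfies $\mathrm{proj.dim}(I_w)\le i$, and this requires knowing both the precise position where $I_w$ can appear (an analogue of the estimate $\mathbf{a}(w)\le i\le \ell(w)$ used in the proof of Theorem~\ref{thm1}, now for injectives rather than tiltings) and the precise value or bound for $\mathrm{proj.dim}(I_w)$ from the earlier papers \cite{Ma3,Ma4} in the series. Since $\mathcal{O}_0$ is Koszul and Ringel self-dual, the injective coresolution of $A$ is Koszul-dual to a projective resolution, so linearity of the relevant complex (again using the balanced quasi-hereditary structure from \cite{Ma2}) should make the degree $=$ homological-degree matching automatic, which is what ultimately yields the inequality; once linearity is in hand the proof is short. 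I would also record that finite global dimension upgrades ``Auslander--Gorenstein'' to ``Auslander regular'' with no further work.
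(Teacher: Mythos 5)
Your first route is, in outline, the paper's route---glue minimal injective coresolutions of the tilting terms of the Theorem~\ref{thm1} coresolution---but neither of the mechanisms you offer for the degree bookkeeping delivers the required estimate, and the actual key ingredient is missing. What has to be shown is that every (shifted) copy of $I_x$ in the glued coresolution sits in homological position at least $\mathrm{proj.dim}(I_x)=2\mathbf{a}(w_0x)$ (note the $w_0$: the formula is $2\mathbf{a}(w_0x)$, not $2\mathbf{a}(x)$ as you wrote). Theorem~\ref{thm1} contributes only one factor of $\mathbf{a}$: a summand $T_{w_0y}$ of the tilting coresolution sits in position at least $\mathrm{proj.dim}(T_{w_0y})=\mathbf{a}(w_0y)$. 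The second factor must come from an analysis of the minimal injective coresolutions of the individual tilting modules, and this is precisely the paper's Lemma~\ref{lem4}, which your proposal neither states nor proves: (a) in the minimal injective coresolution of $T_{w_0y}$, a summand $I_x$ can occur only in homological position at least $\mathbf{a}(w_0x)$, and (b) only those $I_x$ with $x\geq_{\mathtt{J}}y$ occur at all. Item (b) is indispensable: it gives $\mathbf{a}(w_0y)\geq\mathbf{a}(w_0x)$, so the two estimates add to $\mathbf{a}(w_0y)+\mathbf{a}(w_0x)\geq 2\mathbf{a}(w_0x)$; without it, splicing only bounds the position below by $\mathbf{a}(w_0x)+\mathbf{a}(w_0y)$ with no control on $\mathbf{a}(w_0y)$, which is not enough. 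Corollary~\ref{cor2}\eqref{cor2.3}, which you invoke, bounds $\mathbf{l}$ of the injective terms in a coresolution of $T$ (a tilting-complex length) and yields neither (a) nor (b).

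Your ``more transparent'' alternative rests on a false premise: the minimal injective coresolution of $A$ (equivalently of $P_e=\Delta_e$ and its translates) is not linear, so Koszul plus Ringel self-duality does not make ``grading shift $=$ homological position'' automatic. Already for $\mathfrak{sl}_2$ the minimal injective coresolution $0\to\Delta_e\to I_s\to I_s\to I_e\to 0$ fails linearity: the cokernel of $\Delta_e\hookrightarrow I_s$ is the simple top of $I_s$, which lies two degrees above its socle, so the grading shift jumps by two between positions $0$ and $1$. The place where Koszul self-duality legitimately enters is the antidominant simple $T_{w_0}=L_{w_0}$, whose minimal injective coresolution is linear and is identified with the dominant standard object in the category of linear complexes of injectives; this gives (a) and (b) for $T_{w_0}$, and the general case is then obtained by applying projective functors $\theta_y$ (which also means the resulting coresolutions are no longer minimal or linear, so minimality of the coresolution of $A$ cannot be assumed anywhere in the argument). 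With Lemma~\ref{lem4} supplied, your splicing plan goes through exactly as in the paper; as written, the proposal has a genuine gap at its central step.
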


We will need the following auxiliary statement.

\begin{lemma}\label{lem4}
For $w\in W$, let $\mathcal{I}_\bullet(T_{w_0w})$ be a minimal injective coresolution
of $T_{w_0w}$. Then we have:
\begin{enumerate}[$($i$)$]
\item\label{lem4.1} The maximal value of $i$ such that 
$\mathcal{I}_i(T_{w_0w})\neq 0$ equals $\mathbf{a}(w_0w)$.
\item\label{lem4.2} Each indecomposable direct summand of $\mathcal{I}_\bullet(T_{w_0w})$ 
is isomorphic, up to a graded shift, to $I_x$, for some $x\geq_{\mathtt{J}}w$.
\item\label{lem4.3} If $\mathcal{I}_i(T_{w_0w})$ has a direct summand
isomorphic, up to a graded shift, to $I_x$, for some $x\in W$, 
then $i\geq \mathbf{a}(w_0x)$.
\end{enumerate}
\end{lemma}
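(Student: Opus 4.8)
My plan is to derive Lemma~\ref{lem4} from the already-established Auslander-Ringel regularity and its Ringel-dual consequences in Corollary~\ref{cor2}, transported through the various (anti)symmetries of $\mathcal{O}_0$. The three parts are essentially the injective analogue, read off via the simple-preserving duality and Koszulity, of facts about tilting resolutions of projectives; the key numerical inputs are $\mathrm{proj.dim}(T_w)=\mathbf{a}(w)$ from \cite{Ma3,Ma4}, the identity $\mathbf{r}(M)=\mathbf{a}(\text{label})$-type bounds appearing in the proof of Theorem~\ref{thm1}, and the fact that the duality $d$ on $\mathcal{O}_0$ fixes tilting modules ($dT_w\cong T_w$) and swaps $P_w\leftrightarrow I_w$, $\Delta_w\leftrightarrow\nabla_w$.

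For part~\ref{lem4.1}: applying the simple-preserving duality $d$ to a minimal injective coresolution of $T_{w_0w}$ produces a minimal projective resolution of $dT_{w_0w}\cong T_{w_0w}$, so the length of the injective coresolution equals $\mathrm{proj.dim}(T_{w_0w})=\mathbf{a}(w_0w)$ by \cite{Ma3,Ma4}. For part~\ref{lem4.2}: dualizing again, the indecomposable summands of the injective coresolution of $T_{w_0w}$ correspond to the indecomposable summands $P_x$ occurring in a minimal projective resolution of $T_{w_0w}$; by the description of $\mathcal{T}_\bullet(P_e)$ in the proof of Theorem~\ref{thm1} (combined with applying $\theta_x$ and the linkage/compatibility of projective functors with the Kazhdan--Lusztig two-sided order), such $P_x$ forces $x\geq_{\mathtt{J}}w_0w$; translating through $w_0$ and using that $\mathbf{a}$ and $\geq_{\mathtt{J}}$ are invariant under $w\mapsto w_0w$ (more precisely, tracking how the cell containing $w_0w$ relates to the cell of $w$) yields $x\geq_{\mathtt{J}}w$. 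I would double-check here exactly which two-sided-cell inequality the Koszul--Ringel duality delivers, since the bookkeeping with $w_0(-)$, $(-)\inv$ and $w_0(-)w_0$ is the place errors creep in. For part~\ref{lem4.3}: this is the ``each summand appears only late enough'' refinement. If $I_x$ (up to shift) is a summand of $\mathcal{I}_i(T_{w_0w})$, then dually $P_x$ is a summand of $\mathcal{P}_{-i}$ of a tilting module, and the truncation/linear-complex estimate from the proof of Theorem~\ref{thm1} — namely that $T$'s resolution places a projective labelled $x$ no earlier than homological degree $\mathbf{a}(w_0x)$ (this is precisely the lower bound $\mathbf{a}(\cdot)\le i$ appearing there, applied to the relevant label) — gives $i\ge\mathbf{a}(w_0x)$. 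Equivalently one can invoke Corollary~\ref{cor2}\ref{cor2.1}: $\mathbf{r}(\mathcal{P}_{-i}(T))\le i$ together with $\mathbf{r}(I_x)=\mathbf{a}(w_0x)$ (the latter being the dual of $\mathrm{proj.dim}(T_{w_0x})=\mathbf{a}(w_0x)$, read through $\mathbf{r}(I_x)=\mathbf{l}(dI_x)=\mathbf{l}(P_x)$... ) forces the bound.

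The main obstacle I anticipate is \emph{not} the homological degree bounds — those are near-formal consequences of Theorem~\ref{thm1} and Corollary~\ref{cor2} once the dualities are set up — but rather part~\ref{lem4.2}'s \emph{combinatorial} claim that the summands are indexed by $x\geq_{\mathtt{J}}w$. This requires knowing that the labels $x$ of projectives appearing in a minimal projective resolution of a tilting module $T_{w_0w}$ all lie in two-sided cells above that of $w$, which in turn rests on: (a) the Kazhdan--Lusztig-theoretic computation of the tilting coresolution of $\Delta_e=P_e$ from the proof of Theorem~\ref{thm1}, and (b) the observation that applying a projective functor $\theta_x$ can only produce summands $P_y$ with $y$ in cells $\geq_{\mathtt J}$ the cells already present. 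Pinning down (b) cleanly — i.e. that $\theta_x P_z$ decomposes into $P_y$'s with $y\geq_{\mathtt{L}}z$ hence $y\geq_{\mathtt{J}}z$ — and then chasing that statement through Koszul--Ringel duality to the injective side is where the real work lies; I would isolate it as a preliminary sublemma about projective functors and two-sided cells before assembling the three parts.
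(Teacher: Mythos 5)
Parts (i) and (iii) of your plan are essentially sound. Part (i) is exactly the paper's argument: dualize and quote $\pd(T_{w_0w})=\mathbf{a}(w_0w)$ from \cite{Ma3,Ma4}. For part (iii), your variant via Corollary~\ref{cor2}(i) does work and amounts to a mild repackaging of the paper's proof (which instead recomputes, via Koszul self-duality, the multiplicities in the linear injective coresolution of $T_{w_0}=L_{w_0}$ and then applies $\theta_w$): dualizing turns $I_x$ in $\mathcal{I}_i(T_{w_0w})$ into $P_x$ in $\mathcal{P}_{-i}(T_{w_0w})$, which is a summand of $\mathcal{P}_{-i}(T)$, so $\mathbf{r}(P_x)\leq i$; you then need the identity $\mathbf{r}(P_x)=\mathbf{a}(w_0x)$, the Ringel dual of $\pd T_{w_0x}=\mathbf{a}(w_0x)$ (the paper uses exactly this identity in the proof of Proposition~\ref{prop25}). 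Note, however, that your formula ``$\mathbf{r}(I_x)=\mathbf{a}(w_0x)$'' is a slip: $\mathbf{r}(I_x)=0$ since injectives have costandard filtrations; the quantity you want is $\mathbf{r}(P_x)$. Also, for general $w$ your sketch still needs the passage through $\theta_w$ discussed below.

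The genuine gap is in part (ii), and it sits precisely where you flagged uncertainty. Two of your stated steps are false. First, a projective $P_x$ occurring in the minimal projective resolution of $T_{w_0w}$ need not satisfy $x\geq_{\mathtt{J}}w_0w$: already for $\mathfrak{sl}_2$ and $w=e$ the resolution $0\to P_e\to P_{w_0}\to T_{w_0}\to 0$ contains $P_e$, and $e\not\geq_{\mathtt{J}}w_0$. Second, $\geq_{\mathtt{J}}$ and $\mathbf{a}$ are not invariant under $u\mapsto w_0u$; this map reverses the two-sided order and does not preserve $\mathbf{a}$ (e.g. $\mathbf{a}(w_0)=\ell(w_0)\neq 0=\mathbf{a}(e)$) --- only $u\mapsto u^{-1}$ and $u\mapsto w_0uw_0$ preserve both. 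Moreover, the sublemma you propose to isolate, that $\theta_xP_z$ decomposes into $P_y$ with $y\geq_{\mathtt{J}}z$, bounds cells relative to the label $z$ of the module being translated, whereas claim (ii) requires a bound relative to the parameter $w$ indexing $T_{w_0w}$. The missing ingredient is the identity $T_{w_0w}\cong\theta_wT_{w_0}$ (equivalently $I_y\cong\theta_yI_e$): the paper applies the exact functor $\theta_w$ to $\mathcal{I}_\bullet(T_{w_0})$ and uses $\theta_wI_y=\theta_w\theta_yI_e\cong\bigoplus_zI_z^{\oplus m_{w,y}^z}$ together with the fact that $m_{w,y}^z\neq 0$ forces $z\geq_{\mathtt{J}}w$, i.e. the cell bound relative to the functor label, which yields $x\geq_{\mathtt{J}}w$ directly with no $w_0$-bookkeeping at all. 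The same device also finishes the general-$w$ case of (iii), since $z\geq_{\mathtt{J}}y$ gives $\mathbf{a}(w_0z)\leq\mathbf{a}(w_0y)\leq i$; without it, your reduction to the $w=e$ (or dominant-label) situation is not complete.
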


\begin{proof}
Using the simple preserving duality, Claim~\ref{lem4.1} is one of the main results of 
\cite{Ma3,Ma4}. 

Since $T_{w_0w}\cong \theta_w T_{w_0}$, to prove  Claim~\ref{lem4.2}, it is enough to prove the same statement for  $\theta_w\mathcal{I}_\bullet(T_{w_0})$. 
But we have
\begin{displaymath}
\theta_w I_y= \theta_w \theta_y I_e=
\bigoplus_{z\in W}\theta_z^{\oplus m_{w,y}^z}I_e=
\bigoplus_{z\in W} I_z^{\oplus m_{w,y}^z}
\end{displaymath}
and  $m_{w,y}^z\neq 0$ only if $z\geq_{\mathtt{J}}w$.

Let us prove Claim~\ref{lem4.3}. We start with the case $w=e$.
Due to Koszulity
of $\mathcal{O}_0$, the minimal injective coresolution $\mathcal{I}_\bullet(T_{w_0})$ 
of the antidominant tilting$=$simple module $T_{w_0}=L_{w_0}$ is linear and
hence is an object in the category $\mathcal{LI}(\mathcal{O}_0)$ of linear complexes
of injective modules and is isomorphic to the dominant standard$=$projective
object in this category. 

Due to the Koszul self-duality of $\mathcal{O}_0$, 
the category $\mathcal{LI}(\mathcal{O}_0)$ is equivalent to 
${}^{\mathbb{Z}}\mathcal{O}_0$. This implies that the multiplicity
of $I_x\langle i\rangle$ as a summand of $\mathcal{I}_i(T_{w_0})$ coincides with the
composition multiplicity of $L_{w_0x^{-1}}\langle -i\rangle$ in $\Delta_e$. The latter 
is given by Kazhdan-Lusztig combinatorics. In particular, it is non-zero only
if $\mathbf{a}(w_0x\inv)\leq i\leq\ell(w_0x\inv)$. 
Consequently, the module 
$I_x$ can appear (up to shift of grading) only in homological positions
$i$ such that $\mathbf{a}(w_0x)=\mathbf{a}(w_0x\inv)\leq i\leq\ell(w_0x\inv)$. 
This proves Claim~\ref{lem4.3} in the case $w=e$. The general case
is obtained from this one applying $\theta_w$.
\end{proof}

\begin{proof}[Proof of Theorem~\ref{thm3}]
Take the minimal tilting coresolution $\mathcal{T}_\bullet(P_e)$ of $P_e$
considered in the proof of Theorem~\ref{thm1}. We can take a minimal injective
coresolution of each $T_x$, up to grading shift, appearing in $\mathcal{T}_\bullet(P_e)$ 
and glue these into an injective coresolution $\mathcal{I}_\bullet$
of $P_e$. Applying $\theta_w$ to $\mathcal{I}_\bullet$, gives an
injective coresolution of $P_w$ without increasing the projective dimensions
of homological positions. 
By \cite{Ma3,Ma4}, the projective dimension of $I_x$ is $2\mathbf{a}(w_0x)$. 
It is thus
enough to show that any graded shift of $I_x$ appearing in $\mathcal{I}_\bullet$ appears only in homological positions $i$ such that 
$i\geq 2\mathbf{a}(w_0x)$.

By Lemma~\ref{lem4}, $I_x$ can only appear in homological position at least
$\mathbf{a}(w_0x)$ when coresolving $T_y$. Furthermore, again by
Lemma~\ref{lem4}, $I_x$ can only appear in coresolutions of $T_{w_0y}$, where
$x\geq_{\mathtt{J}}y$. By Theorem~\ref{thm1}, such $T_{w_0y}$ appears in $\mathcal T_\bullet(P_e)$ in homological
positions at least $\pd T_{w_0y}= \mathbf{a}(w_0y)$.
Adding these two estimates together, we obtain that
$I_x$ appears in $\mathcal{I}_\bullet$ in homological positions at least $\mathbf{a}(w_0y)+\mathbf{a}(w_0x)\geq 2\mathbf{a}(w_0x)$.
This completes the proof.
\end{proof}

\subsection{Singular blocks}\label{s3.4}

\begin{corollary}\label{cor6}
All blocks of $\mathcal{O}$ are  Auslander-Ringel regular, Auslander regular,
and have the properties 
described in Corollary~\ref{cor2}. 
\end{corollary}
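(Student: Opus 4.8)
The strategy is to reduce the case of a general block $\mathcal{O}_\lambda$ to the already-established regularity results for the principal block $\mathcal{O}_0$ via translation functors. Let $\lambda$ be a dominant (possibly singular) integral weight, and let $\tothewall\colon\mathcal{O}_0\to\mathcal{O}_\lambda$ and $\fromthewall\colon\mathcal{O}_\lambda\to\mathcal{O}_0$ be the translation functors to and from the $\lambda$-wall. Recall that $\fromthewall$ is exact, sends projectives to projectives, and sends $P_e\in\mathcal{O}_0$ (up to isomorphism and a suitable indexing) onto the dominant projective $P^\lambda_e\in\mathcal{O}_\lambda$; moreover $\tothewall$ sends tilting modules to tilting modules and, crucially, sends the characteristic tilting module of $\mathcal{O}_0$ onto a module which contains every indecomposable tilting module of $\mathcal{O}_\lambda$ as a summand. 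The key numerical input is that the relevant homological invariants (projective dimension of $T^\lambda_w$, the homological width of a tilting coresolution of $P^\lambda_e$) in the singular block are controlled by the corresponding invariants in $\mathcal{O}_0$, which are governed by Lusztig's $\mathbf{a}$-function via \cite{Ma3,Ma4}.

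\textbf{Key steps.} First I would start from the linear tilting coresolution $\mathcal{T}_\bullet(P_e)$ of $P_e$ in $\mathcal{O}_0$ from the proof of Theorem~\ref{thm1}, together with the injective coresolution $\mathcal{I}_\bullet$ of $P_e$ constructed in the proof of Theorem~\ref{thm3}, both of which realize the required bounds $\mathrm{proj.dim.}(\mathcal{T}_i(P_e))\leq i$ and the homological-position estimates for injectives. Second, apply the exact functor $\tothewall$ term by term: this produces a tilting coresolution $\tothewall\mathcal{T}_\bullet(P_e)$ of the dominant projective $\tothewall P_e\cong P^\lambda_{e}$ in $\mathcal{O}_\lambda$, and an injective coresolution $\tothewall\mathcal{I}_\bullet$ of the same module. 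Since $\tothewall$ is exact and sends projectives to projectives, it does not increase projective dimension, so every homological position $i$ of $\tothewall\mathcal{T}_\bullet(P_e)$ still has projective dimension at most $i$, and similarly for the injective coresolution. Third, exactly as in Theorem~\ref{thm1}, for arbitrary $x\in W$ apply the indecomposable projective functor $\theta^\lambda_x$ on $\mathcal{O}_\lambda$ to obtain coresolutions of all projectives $P^\lambda_x$ with the same bounds; since the $P^\lambda_x$ exhaust the indecomposable projectives of $\mathcal{O}_\lambda$, this gives Auslander-Ringel regularity. For Auslander regularity one glues injective coresolutions of the tilting summands of $\tothewall\mathcal{T}_\bullet(P_e)$, using the singular analogue of Lemma~\ref{lem4} (injective dimension of $I^\lambda_x$ equals $2\mathbf{a}^\lambda(\cdot)$, and the two-sided-order constraints on which $I^\lambda_x$ occur), to run the same two-estimate addition argument as in the proof of Theorem~\ref{thm3}. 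The remaining assertions of Corollary~\ref{cor2} in the singular block then follow from the Ringel self-duality and the simple-preserving duality of $\mathcal{O}_\lambda$, exactly as in the principal block.

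\textbf{Main obstacle.} The delicate point is bookkeeping the homological invariants under translation to the wall: one must verify that $\tothewall$ applied to $\mathcal{T}_\bullet(P_e)$ remains a \emph{valid} (though no longer minimal or linear) tilting coresolution of the correct length and, more importantly, that the projective dimensions of $I^\lambda_x$ and the positions in which they appear still satisfy the inequality $i\geq 2\mathbf{a}^\lambda(w_0x)$ (or the appropriate singular replacement). This requires the singular-block versions of the results of \cite{Ma3,Ma4} on $\mathrm{proj.dim.}(T^\lambda_w)$ and $\mathrm{proj.dim.}(I^\lambda_x)$, and a careful matching of indexing conventions (cosets $W_\lambda\backslash W$ or longest-coset-representative conventions) between the principal and singular blocks. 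Once the translation dictionary for $\mathbf{a}$-values and two-sided cells is set up correctly, every step is a formal consequence of exactness and the projective-to-projective property of $\fromthewall$ and $\tothewall$, so I expect the proof to be short modulo this combinatorial dictionary.
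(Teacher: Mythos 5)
Your overall strategy, reduction to the principal block by translation to the wall using exactness and preservation of projectives, injectives and tiltings, is the same as the paper's, but as written your argument has two genuine gaps. First, the corollary concerns \emph{all} blocks of $\mathcal{O}$, including non-integral ones, while you only treat integral dominant $\lambda$. The paper first invokes Soergel's combinatorial description from \cite{So}, by which every block of $\mathcal{O}$ is equivalent to an integral block (possibly for a different Lie algebra), and only then runs the wall-crossing argument; without this reduction (or some substitute) the non-integral case is simply missing.

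Second, and more substantively, your route to Auslander regularity of $\mathcal{O}_\lambda$ --- gluing injective coresolutions of the tilting summands of the translated tilting coresolution and rerunning the two-estimate argument of Theorem~\ref{thm3} with a ``singular analogue of Lemma~\ref{lem4}'' and formulas of the type $\mathrm{proj.dim}(I^\lambda_x)=2\mathbf{a}^\lambda(\cdot)$ --- rests on singular-block versions of the results of \cite{Ma3,Ma4} which you never establish and which you yourself flag as the main obstacle. This detour is unnecessary: the proof of Theorem~\ref{thm3} already produces, for every $w\in W$, an injective coresolution $\mathcal{I}_\bullet$ of $P_w$ in $\mathcal{O}_0$ with $\mathrm{proj.dim}(\mathcal{I}_i)\leq i$, and translation to the wall is exact, sends injectives to injectives and projectives to projectives, hence does not increase projective dimension. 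Applying it termwise gives injective coresolutions with the required bounds of the translated projectives, and every indecomposable projective of $\mathcal{O}_\lambda$ is a direct summand of such a translation; no singular Kazhdan--Lusztig combinatorics is needed. (Alternatively, your own step two already provides the translated injective coresolution of the dominant projective of $\mathcal{O}_\lambda$; applying the singular projective functors $\theta^\lambda_x$, which are exact and preserve both injectives and projectives, finishes the job and lets you delete the appeal to a singular Lemma~\ref{lem4} altogether.) The same remark applies to the properties of Corollary~\ref{cor2}: the paper transfers them directly by noting that translation does not increase injective dimension nor the values of $\mathbf{l}$ and $\mathbf{r}$, whereas your appeal to Ringel self-duality of $\mathcal{O}_\lambda$ via \cite{So2} also works but is an extra input.
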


\begin{proof}
Due to Soergel's combinatorial description of blocks of $\mathcal{O}$
from \cite{So}, each block of category $\mathcal{O}$ is equivalent to an
integral block of $\mathcal{O}$ (possibly for a different Lie algebra).
Therefore we may restrict our attention to integral blocks. 

Each regular integral block is equivalent to $\mathcal{O}_0$. Each singular
integral block is obtained from a regular integral block using translation
to the corresponding wall. These translation functors are exact, send 
projectives to projectives, injectives to injectives and tiltings to tiltings
and do not increase projective dimension, injective dimension, nor
the values of $\mathbf{l}$ and $\mathbf{r}$. Therefore the claim  follows from Theorems~\ref{thm1} and \ref{thm3} and
Corollary~\ref{cor2} applying these translation functors.
\end{proof}

\subsection{$\mathfrak{sl}_3$-example}\label{s3.5}

For the Lie algebra $\mathfrak{sl}_3$, we have
$W=\{e,s,t,st,ts,w_0=sts=tst\}$. The projective dimensions
of the indecomposable tilting and injective modules in $\mathcal{O}_0$ 
are given by:
\begin{displaymath}
\begin{array}{c||c|c|c|c|c|c}
w&e&s&t&st&ts&w_0\\ \hline
\mathrm{proj.dim}(T_w)&0&1&1&1&1&3
\end{array}\qquad
\begin{array}{c||c|c|c|c|c|c}
w&e&s&t&st&ts&w_0\\ \hline
\mathrm{proj.dim}(I_w)&6&2&2&2&2&0
\end{array}
\end{displaymath}
The minimal (ungraded) tilting coresolutions of the
{\color{blue}indecomposable projectives} in $\mathcal{O}_0$ are:
\begin{displaymath}
0\to {\color{blue}P_{e}}\to T_e\to T_s\oplus T_t
\to T_{st}\oplus T_{ts}\to T_{w_0}\to 0,
\end{displaymath}
\begin{displaymath}
0\to {\color{blue}P_{s}}\to T_e\to  T_t \to 0,
\end{displaymath}
\begin{displaymath}
0\to {\color{blue}P_{t}}\to T_e\to  T_s \to 0,
\end{displaymath}
\begin{displaymath}
0\to {\color{blue}P_{st}}\to T_e\to  T_{ts} \to 0,
\end{displaymath}
\begin{displaymath}
0\to {\color{blue}P_{ts}}\to T_e\to  T_{st} \to 0,
\end{displaymath}
\begin{displaymath}
0\to {\color{blue}P_{w_0}}\to T_{e}\to 0,
\end{displaymath}
The minimal (ungraded) injective coresolutions of the
{\color{blue}indecomposable projectives} in $\mathcal{O}_0$ are:
$$
0\to {\color{blue}P_{e}}\to I_{w_0}
\to I_{w_0}^{\oplus 2}
\to I_t \oplus I_s\oplus I_{w_0}^{\oplus 2}
\to I_{ts}\oplus I_{st} \oplus I_{w_0}
\to I_{st}\oplus I_{ts}\to I_s\oplus I_t\to I_e\to 0,
$$
\begin{displaymath}
0\to {\color{blue}P_{s}}\to I_{w_0}\to I_{w_0}\to  I_s \to 0,
\end{displaymath}
\begin{displaymath}
0\to {\color{blue}P_{t}}\to I_{w_0}\to I_{w_0}\to  I_t \to 0,
\end{displaymath}
\begin{displaymath}
0\to {\color{blue}P_{st}}\to I_{w_0}\to I_{w_0}\to  I_{st} \to 0,
\end{displaymath}
\begin{displaymath}
0\to {\color{blue}P_{ts}}\to I_{w_0}\to I_{w_0}\to  I_{ts} \to 0,
\end{displaymath}
\begin{displaymath}
0\to {\color{blue}P_{w_0}}\to I_{w_0}\to 0,
\end{displaymath}

\section{Regularity phenomena in parabolic category $\mathcal{O}^{\mathfrak{p}}$}\label{s4}

\subsection{Parabolic category $\mathcal{O}^{\mathfrak{p}}$}\label{s4.1}

Fix a parabolic subalgebra $\mathfrak{p}$ of $\mathfrak{g}$ containing
$\mathfrak{h}\oplus\mathfrak{n}_+$. Denote by $\mathcal{O}^{\mathfrak{p}}$
the full subcategory of $\mathcal{O}$ consisting of all objects the action of
$U(\mathfrak{p})$ on which is locally finite, see \cite{RC}.
Then $\mathcal{O}^{\mathfrak{p}}$ is the Serre subcategory of $\mathcal{O}$
generated by all simple modules whose highest weights are (dot-)dominant (by which we mean it is the largest weight in its orbit under the dot action) 
and integral with respect to the Levi factor of $\mathfrak{p}$.

Similarly to Subsection~\ref{s3.4}, we can start with the integral regular
situation. Let $W_{\mathfrak{p}}$ denote the Weyl group of the Levi factor of 
$\mathfrak{p}$ which we view as a parabolic subgroup of $W$.
We denote by $w_0^{\mathfrak{p}}$ the longest element in $W_{\mathfrak{p}}$.
The principal block $\mathcal{O}^{\mathfrak{p}}_0$ is the Serre subcategory of
$\mathcal{O}^{\mathfrak{p}}_0$ generated by $L_w$, where $w$ belongs to the set
$\mathrm{short}({}_{W_{\mathfrak{p}}}\hspace{-2mm}\setminus\hspace{-1mm} W)$ 
of shortest coset representatives
for cosets in ${}_{W_{\mathfrak{p}}}\hspace{-2mm}\setminus\hspace{-1mm} W$.

\subsection{$\mathcal{O}^{\mathfrak{p}}_0$ is Auslander-Ringel regular}\label{s4.2}

\begin{theorem}\label{thm7}
The category  $\mathcal{O}^{\mathfrak{p}}_0$ is Auslander-Ringel regular.
\end{theorem}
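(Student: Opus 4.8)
\textbf{Proof proposal for Theorem~\ref{thm7}.}
The plan is to mimic the proof of Theorem~\ref{thm1}, replacing $\mathcal{O}_0$ by $\mathcal{O}^{\mathfrak{p}}_0$ throughout and using the well-developed parallel structure theory of parabolic category $\mathcal{O}$. The key input is that $\mathcal{O}^{\mathfrak{p}}_0$ is again a block of a quasi-hereditary algebra, say $A^{\mathfrak{p}}$, which by work of Soergel, Backelin and others carries a Koszul $\mathbb{Z}$-grading; moreover it enjoys the analogous self-dualities. The subtle point is that parabolic category $\mathcal{O}$ is \emph{not} Ringel self-dual in general; rather, the Ringel dual of $\mathcal{O}^{\mathfrak{p}}_0$ is a \emph{singular} block of ordinary category $\mathcal{O}$ (equivalently, Koszul duality sends $\mathcal{O}^{\mathfrak{p}}_0$ to a singular block). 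So I would use the combined Koszul--Ringel duality in the form: the category $\mathcal{LT}(\mathcal{O}^{\mathfrak{p}}_0)$ of linear complexes of tilting modules is equivalent to ${}^{\mathbb{Z}}\mathcal{O}^{\wi}_0$, the graded version of the singular block of $\mathcal{O}$ attached to the wall determined by $\mathfrak{p}$ — or, what suffices, it is equivalent to the graded module category of the Ringel dual, and one only needs to track homological positions and graded shifts of the tilting summands.

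First I would record that $A^{\mathfrak{p}}$ is a balanced quasi-hereditary algebra (in the sense used in the proof of Theorem~\ref{thm1}), so that the minimal tilting coresolution $\mathcal{T}_\bullet(\Delta_e)$ of the dominant standard module $\Delta_e$ — here $\Delta_e = P_e$ is both projective and standard, the parabolic dominant Verma module — lies in $\mathcal{LT}(\mathcal{O}^{\mathfrak{p}}_0)$ and is linear. Next, using the duality identifying $\mathcal{LT}(\mathcal{O}^{\mathfrak{p}}_0)$ with the graded category of the Ringel dual, the multiplicity of $T_w\langle i\rangle$ in $\mathcal{T}_i(\Delta_e)$ is read off as a composition multiplicity of a simple module in a standard module of the dual, hence is governed by (parabolic) Kazhdan--Lusztig combinatorics. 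In particular this multiplicity vanishes unless $\mathbf{a}(\cdot)\le i\le \ell(\cdot)$ for the appropriate element, where the relevant lower bound is exactly $\mathrm{proj.dim}(T_w)$ by the results of \cite{Ma3,Ma4} on projective dimension of tilting modules in parabolic category $\mathcal{O}$. Combining, $\mathcal{T}_i(\Delta_e)$ has projective dimension at most $i$.

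The last step is the translation/projective-functor argument: for $x\in\mathrm{short}({}_{W_{\mathfrak{p}}}\hspace{-1mm}\setminus W)$, the projective $P_x$ in $\mathcal{O}^{\mathfrak{p}}_0$ is obtained from $P_e=\Delta_e$ by applying a projective functor (the parabolic analogue of $\theta_x$, e.g.\ the composite of the inclusion/Zuckerman adjunction with $\theta_x$ on $\mathcal{O}_0$, or directly the indecomposable projective functors on $\mathcal{O}^{\mathfrak{p}}_0$), which is exact and sends projectives to projectives, hence does not increase projective dimension; applying it to $\mathcal{T}_\bullet(\Delta_e)$ yields a tilting coresolution of $P_x$ with $\mathrm{proj.dim}(Q_i)\le i$. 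Since every indecomposable projective arises this way, gluing gives the required coresolution of $A^{\mathfrak{p}}$, proving Auslander--Ringel regularity. The main obstacle I anticipate is purely bookkeeping: carefully setting up the Koszul--Ringel duality for the parabolic block in its graded incarnation and checking that the homological-degree/grading-shift dictionary really does identify the position of $T_w$ with $\mathbf{a}$-function data — i.e.\ that the lower bound coming from the combinatorics matches $\mathrm{proj.dim}(T_w)$ on the nose — and making sure the appropriate projective functors on $\mathcal{O}^{\mathfrak{p}}_0$ generate all indecomposable projectives from the dominant one.
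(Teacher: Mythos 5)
Your route is the same as the paper's: realize the minimal tilting coresolution of the dominant projective $\Delta_e=P_e$ as an object of the category of linear complexes of tilting modules in $\mathcal{O}^{\mathfrak{p}}_0$, identify that category via combined Koszul--Ringel duality with a graded \emph{singular} block of ordinary $\mathcal{O}$ (using that the singular block is Ringel self-dual and that the two dualities commute), read off the homological positions of the tilting summands from graded composition multiplicities, and finally apply projective functors to reach all other projectives. That last step is fine as you describe it.

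The gap is exactly the step you postpone as ``purely bookkeeping'': it is the one place where the parabolic case genuinely differs from Theorem~\ref{thm1}, and your stated resolution of it is incorrect. The projective dimension of the indecomposable tilting module $T^{\mathfrak{p}}_x$ in $\mathcal{O}^{\mathfrak{p}}_0$ is \emph{not} given by \cite{Ma3,Ma4} (those results give $\mathrm{proj.dim}(T_w)=\mathbf{a}(w)$ in the regular block of ordinary $\mathcal{O}$); the parabolic formula is $\mathrm{proj.dim}(T^{\mathfrak{p}}_x)=\mathbf{a}(w_0^{\mathfrak{p}}x)-\mathbf{a}(w_0^{\mathfrak{p}})$, taken from \cite[Table~2]{CM}. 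Correspondingly, under the equivalence of $\mathcal{LT}(\mathcal{O}^{\mathfrak{p}}_0)$ with ${}^{\mathbb{Z}}\mathcal{O}_{\lambda}$ (where the wall is the one attached to the $w_0$-conjugate $\mathfrak{p}'$ of $\mathfrak{p}$, a point your notation glosses over), the coresolution of the dominant projective corresponds to the dominant standard object $\Delta(\lambda)\cong T_0^\lambda\Delta_e\langle\ell(w_0^{\mathfrak{p}'})\rangle$, i.e.\ to a translated Verma module carrying an overall grading shift by $\ell(w_0^{\mathfrak{p}})=\mathbf{a}(w_0^{\mathfrak{p}})$, and only the composition factors of $\Delta_e$ indexed by longest coset representatives survive the translation. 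So the lower bound on the homological position of $T^{\mathfrak{p}}_x$ that comes out of the combinatorics is $\mathbf{a}(w_0^{\mathfrak{p}}x)-\mathbf{a}(w_0^{\mathfrak{p}})$, not the unshifted $\mathbf{a}$-value of ``the appropriate element''; the regularity bound holds only because this shift cancels against the identical shift in the projective-dimension formula. Verifying this cancellation is the actual content of the parabolic proof; without it, a naive transcription of the Theorem~\ref{thm1} argument produces mismatched numbers, so as written the proposal does not yet prove the statement.
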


\begin{proof}
The proof is similar to the proof of Theorem~\ref{thm1}, so we only emphasize the
differences. 
By \cite{BGS}, the Koszul dual of $\mathcal{O}^{\mathfrak{p}}_0$
is 
the singular integral block $\mathcal{O}_{\lambda}$ of $\mathcal{O}$ 
where $\lambda$ is chosen such that the dot-stabilizer of $\lambda$ equals $W_{\mathfrak{p'}}$ where $\mathfrak{p'}$ is the $w_0$-conjugate of $\mathfrak{p}$.
By \cite{So2}, the block $\cO_\lambda$ is Ringel self-dual, and by \cite{Ma2}, the Ringel duality and the Koszul duality commute.
Therefore, the
category of linear complexes of tilting modules in $\mathcal{O}^{\mathfrak{p}}_0$
is equivalent to ${}^{\mathbb{Z}}\mathcal{O}_{\lambda}$.

By \cite{Ma}, the tilting coresolution of the dominant projective ($=$standard)
module in $\mathcal{O}^{\mathfrak{p}}_0$ is $\Delta(\lambda)$, the dominant standard object in
${}^{\mathbb{Z}}\mathcal{O}_{\lambda}$. Denoting by $T_0^\lambda:{}^\Z\cO_0\to {}^\Z\cO_\lambda$ the graded translation functor to the $\lambda$-wall, we have $\Delta(\lambda)\cong T_0^\lambda \Delta_e\langle \ell(w_0^{\mathfrak{p'}}) \rangle $.
This means that the degree $i$ component of $\Delta(\lambda)$ consists of $T_0^\lambda L_u$ where $L_u$ belongs to the degree  $i+\ell(w_0^{\mathfrak{p}})=i+\ell(w_0^{\mathfrak{p'}})$ component of $\Delta_e$ and such that 
$u\in \mathrm{long}( W /_{W_{\mathfrak{p'}}})=w_0(\mathrm{long}( {}_{W_{\mathfrak{p}}} \hspace{-2mm}\setminus\hspace{-1mm} W))\inv w_0$.
It follows that the $i$-th component in the tilting coresolution contains only $T_x^{\mathfrak p}$ where $x\in \mathrm{short}( {}_{W_{\mathfrak{p}}} \hspace{-2mm}\setminus\hspace{-1mm} W)$ is such that
$\mathbf{a}(w_0(\wi x)\inv w_0)\geq i+\ell(\wi)=i+\mathbf{a}(\wi)$.

It remains to check from \cite[Table~2]{CM} that 
\[\pd T_x^{\mathfrak p} = \mathbf{a}(\wi x)-\mathbf{a}(\wi)= \mathbf{a}(w_0(\wi x )\inv w_0)-\mathbf{a}(\wi)\] and compare with the condition in the previous paragraph. 
This proves the regularity property for the tilting coresolution of the dominant projective. 

The regularity property for other projective modules in $\cO^{\mathfrak p}_0$ is obtained by applying projective functors exactly as in Theorem~\ref{thm1}.
\end{proof}

Let $P^{\mathfrak{p}}$ denote a projective generator,
$I^{\mathfrak{p}}$ an injective cogenerator,
and $T^{\mathfrak{p}}$ the characteristic tilting module in $\mathcal{O}^{\mathfrak{p}}_0$.
Similarly to Corollary~\ref{cor2} (using that $\cO^{\mathfrak p}_0$ is equivalent to its Ringel dual $\cO^{\mathfrak p'}_0$), we have:

\begin{corollary}\label{cor8}
{\hspace{1mm}}

\begin{enumerate}[$($i$)$]
\item\label{cor8.1} Let $\mathcal{P}_\bullet(T^{\mathfrak{p}})$ be a minimal 
projective resolution of $T^{\mathfrak{p}}$ in $\mathcal{O}^{\mathfrak{p}}_0$. 
Then $\mathbf{r}(\mathcal{P}_{-i}(T^{\mathfrak{p}}))\leq i$, for all $i\geq 0$.
\item\label{cor8.2} Let $\mathcal{T}_\bullet(I^{\mathfrak{p}})$ 
be a minimal tilting resolution of the basic injective cogenerator 
$I^{\mathfrak{p}}$ in $\mathcal{O}^{\mathfrak{p}}_0$. 
Then $\mathrm{inj.dim.}(\mathcal{T}_{-i}(I^{\mathfrak{p}}))\leq i$,  for all $i\geq 0$.
\item\label{cor8.3} Let $\mathcal{I}_\bullet(T^{\mathfrak{p}})$ be a minimal 
injective coresolution of $T^{\mathfrak{p}}$ in $\mathcal{O}^{\mathfrak{p}}_0$. Then 
$\mathbf{l}(\mathcal{I}_i(T^{\mathfrak{p}}))\leq i$, for all $i\geq 0$.
\end{enumerate}
\end{corollary}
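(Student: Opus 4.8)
The plan is to deduce Corollary~\ref{cor8} from Theorem~\ref{thm7} by mimicking the proof of Corollary~\ref{cor2}, using the two dualities available on $\mathcal{O}^{\mathfrak{p}}_0$: the simple-preserving duality (which fixes $\mathcal{O}^{\mathfrak{p}}_0$ and swaps projectives with injectives, standards with costandards, and fixes tiltings up to the obvious relabeling) and the Ringel duality, which by \cite{So2,Ma2} identifies $\mathcal{O}^{\mathfrak{p}}_0$ with $\mathcal{O}^{\mathfrak{p'}}_0$ for the $w_0$-conjugate parabolic $\mathfrak{p'}$. Since both $\mathfrak{p}$ and $\mathfrak{p'}$ run over all parabolics as the other does, and since all of the statements in Theorem~\ref{thm7} and Corollary~\ref{cor8} are insensitive to replacing $\mathfrak{p}$ by $\mathfrak{p'}$, we may freely use Theorem~\ref{thm7} for both blocks.

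First I would record the precise dictionary under these two functors. Under the Ringel duality functor $R\colon D^b(\mathcal{O}^{\mathfrak{p}}_0)\to D^b(\mathcal{O}^{\mathfrak{p'}}_0)$ one has: tilting modules go to projective modules, injective modules go to tilting modules, the characteristic tilting module $T^{\mathfrak{p}}$ goes to the projective generator $P^{\mathfrak{p'}}$, and the numerical invariants transform as $\mathbf{l}\leftrightarrow\mathrm{proj.dim.}$ on one side and $\mathbf{r}\leftrightarrow\mathrm{inj.dim.}$ on the other; in particular a minimal tilting (co)resolution of $M$ is sent to a minimal projective (co)resolution of $R(M)$, with homological degrees preserved up to the standard sign conventions for $\mathbf{l}$ and $\mathbf{r}$. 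Under the simple-preserving duality $\star$ one has $P_x^{\mathfrak{p}}\leftrightarrow I_x^{\mathfrak{p}}$, $T_x^{\mathfrak{p}}\leftrightarrow T_x^{\mathfrak{p}}$ (with the index possibly flipped by $w_0$), $\mathrm{proj.dim.}\leftrightarrow\mathrm{inj.dim.}$, and $\mathbf{l}\leftrightarrow\mathbf{r}$, and a minimal coresolution of $M$ becomes a minimal resolution of $\star M$.

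Then the three claims follow by transport of structure. For \ref{cor8.1}: Theorem~\ref{thm7} gives a tilting coresolution $0\to A^{\mathfrak{p}}\to Q_0\to\cdots\to Q_k\to 0$ with $Q_i\in\Add(T^{\mathfrak{p}})$ and $\mathrm{proj.dim.}(Q_i)\le i$; applying $R$ turns this into a statement about a minimal projective resolution of $R(A^{\mathfrak{p}})$, which is (a direct sum of shifts of) the characteristic tilting module $T^{\mathfrak{p'}}$ of the Ringel-dual block, and $\mathrm{proj.dim.}$ on the source side becomes $\mathbf{r}$ on the target side — giving $\mathbf{r}(\mathcal{P}_{-i}(T^{\mathfrak{p'}}))\le i$; since $\mathfrak{p'}$ ranges over all parabolics, this is \ref{cor8.1}. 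For \ref{cor8.2}: apply the simple-preserving duality $\star$ to Theorem~\ref{thm7}; the tilting coresolution of $P^{\mathfrak{p}}$ becomes a tilting resolution of $I^{\mathfrak{p}}=\star P^{\mathfrak{p}}$, and the bound $\mathrm{proj.dim.}(Q_i)\le i$ becomes $\mathrm{inj.dim.}(\mathcal{T}_{-i}(I^{\mathfrak{p}}))\le i$. For \ref{cor8.3}: this is the Ringel dual of \ref{cor8.2} in exactly the way \ref{cor2.3} is the Ringel dual of \ref{cor2.2} — apply $R$ to \ref{cor8.2} for $\mathfrak{p'}$, so that the tilting resolution of $I^{\mathfrak{p'}}$ becomes an injective coresolution of $R(I^{\mathfrak{p'}})=T^{\mathfrak{p}}$ and $\mathrm{inj.dim.}$ on the source becomes $\mathbf{l}$ on the target.

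The main obstacle, such as it is, is bookkeeping rather than substance: one must make sure that the Ringel equivalence really does interchange the homological invariants $\mathbf{l},\mathbf{r}$ with injective and projective dimension in the stated way and that it sends $T^{\mathfrak{p}}$ to $P^{\mathfrak{p'}}$ and $I^{\mathfrak{p}}$ to $T^{\mathfrak{p'}}$ rather than to the wrong one of the pair, and one should double-check the compatibility of the $w_0$-twist on indices with the fact that all the invariants appearing in Corollary~\ref{cor8} depend only on the isomorphism class of the whole block, not on the labeling. Since none of the claims mentions individual indecomposables, these index subtleties are harmless, and the argument is a routine dualization of the proof of Corollary~\ref{cor2}, which is why the proof can be compressed to the single remark already indicated in the text preceding the statement.
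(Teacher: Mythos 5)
Your overall strategy is the one the paper intends: obtain (ii) from Theorem~\ref{thm7} by the simple preserving duality $\star$, and obtain (i) and (iii) by transporting Theorem~\ref{thm7} through the Ringel duality between $\mathcal{O}^{\mathfrak{p}}_0$ and $\mathcal{O}^{\mathfrak{p}'}_0$, using that the Ringel dual block is again a parabolic block to which Theorem~\ref{thm7} applies; the paper's proof is exactly this remark, so at that level you agree with it, and your treatment of (ii) is fine.

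However, the transport-of-structure step for (i) and (iii) -- which is the whole content here -- is carried out inconsistently with the dictionary you fix, and as written neither derivation goes through. With your covariant normalization (tiltings $\mapsto$ projectives, injectives $\mapsto$ tiltings) it is $T^{\mathfrak{p}}$ that is sent to the regular module of the dual block and $I^{\mathfrak{p}}$ that is sent to $T^{\mathfrak{p}'}$; the regular module $A^{\mathfrak{p}}$ is \emph{not} sent to (shifts of) $T^{\mathfrak{p}'}$, and in fact its image is a genuine complex (already for $\mathfrak{sl}_2$, $\mathrm{RHom}(T,\Delta_e)$ has cohomology in two degrees), so applying your $R$ to the coresolution of Theorem~\ref{thm7} does not produce a projective resolution of $T^{\mathfrak{p}'}$. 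Likewise, in your step for (iii) you let the same functor send tiltings to injectives, contradicting your own dictionary. The standard repair is: for (i), apply the contravariant functor $\mathrm{Hom}_{A^{\mathfrak{p}}}(-,T^{\mathfrak{p}})$ to the coresolution $0\to A^{\mathfrak{p}}\to Q_0\to\dots\to Q_k\to 0$ of Theorem~\ref{thm7} (it is exact on it since all cokernels are standardly filtered and $\mathrm{Ext}^{>0}$ from standardly filtered modules to $T^{\mathfrak{p}}$ vanishes); this gives a projective resolution of $T^{\mathfrak{p}}$ over the Ringel dual with $\mathcal{P}_{-i}=\mathrm{Hom}(Q_i,T^{\mathfrak{p}})$, and it transports a projective resolution of $Q_i$ of length at most $i$ into a tilting complex concentrated in degrees $0,\dots,i$, i.e.\ $\mathbf{r}(\mathcal{P}_{-i})\leq i$ (the minimal resolution is a termwise direct summand, so the bound survives). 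Then (iii) is simply the $\star$-dual of (i) -- or, equivalently, apply the inverse covariant equivalence (projectives $\mapsto$ tiltings, tiltings $\mapsto$ injectives) to Theorem~\ref{thm7} stated for $\mathcal{O}^{\mathfrak{p}'}_0$. With these corrections your argument coincides with the paper's.
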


\subsection{$\mathcal{O}_0^{\mathfrak{p}}$ is Auslander regular}\label{s4.3}

\begin{theorem}\label{thm9}
The category $\mathcal{O}^{\mathfrak{p}}_0$ is Auslander regular. 
\end{theorem}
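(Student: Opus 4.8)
The plan is to mimic the proof of Theorem~\ref{thm3} (the analogous statement for $\mathcal{O}_0$), replacing the ingredients from ordinary category $\mathcal{O}$ by their parabolic analogues. Concretely, I would start from the minimal tilting coresolution $\mathcal{T}_\bullet(P^{\mathfrak{p}}_{\text{dom}})$ of the dominant projective $=$ standard module in $\mathcal{O}^{\mathfrak{p}}_0$ constructed in the proof of Theorem~\ref{thm7}, then splice in a minimal injective coresolution of each parabolic tilting module $T^{\mathfrak p}_x$ appearing there, obtaining an injective coresolution $\mathcal{I}^{\mathfrak p}_\bullet$ of the dominant projective. Applying a projective functor to $\mathcal{I}^{\mathfrak p}_\bullet$ yields an injective coresolution of an arbitrary indecomposable projective in $\mathcal{O}^{\mathfrak p}_0$ without increasing projective dimensions in any homological degree (projective functors on $\mathcal{O}^{\mathfrak p}_0$ being exact and preserving both projectives and injectives). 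So it suffices to bound from below the homological position in which a graded shift of $I^{\mathfrak p}_x$ can occur in $\mathcal{I}^{\mathfrak p}_\bullet$, and compare with $\operatorname{proj.dim}(I^{\mathfrak p}_x)$.

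The two estimates to be added are: (1) the homological position in which $T^{\mathfrak p}_y$ appears in $\mathcal{T}_\bullet(P^{\mathfrak p}_{\text{dom}})$, which by the proof of Theorem~\ref{thm7} is at least $\operatorname{proj.dim}(T^{\mathfrak p}_y)$; and (2) the homological position in which $I^{\mathfrak p}_x$ appears in a minimal injective coresolution of $T^{\mathfrak p}_y$. For (2) I would prove a parabolic analogue of Lemma~\ref{lem4}: using Koszulity of $\mathcal{O}^{\mathfrak p}_0$, the linear injective coresolution of the antidominant simple $=$ tilting module lives in the category of linear complexes of injectives, which by Koszul duality is equivalent to the graded singular block $\mathcal{O}_\lambda$ Koszul-dual to $\mathcal{O}^{\mathfrak p}_0$; Kazhdan--Lusztig combinatorics then controls in which degrees each $I^{\mathfrak p}_x$ occurs, giving a lower bound of the form $\operatorname{proj.dim}(I^{\mathfrak p}_x)/2$ (suitably interpreted via the $\mathbf a$-function on the singular block, cf.\ \cite[Table~2]{CM}), and the general $T^{\mathfrak p}_y$ case follows by applying a projective functor and using that projective functors send $I^{\mathfrak p}_y$ to sums of $I^{\mathfrak p}_z$ with $z$ two-sided-above $y$. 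Adding (1) and (2), together with the two-sided-order constraint linking $y$ and $x$, should yield that $I^{\mathfrak p}_x$ appears only in homological degrees $\geq \operatorname{proj.dim}(I^{\mathfrak p}_x)$, which is exactly the required regularity property.

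The main obstacle, I expect, is bookkeeping the Kazhdan--Lusztig combinatorics in the parabolic/singular setting: one must track which coset representatives index the relevant simple/injective/tilting modules, how the graded translation functor $T_0^\lambda$ shifts degrees and homological positions (the $\ell(w_0^{\mathfrak p})$ and $\mathbf a(w_0^{\mathfrak p})$ corrections that already appear in the proof of Theorem~\ref{thm7}), and how $\operatorname{proj.dim}(I^{\mathfrak p}_x)$ and $\operatorname{proj.dim}(T^{\mathfrak p}_x)$ are expressed via the $\mathbf a$-function (as in \cite{CM,Ma3,Ma4}); verifying that the sum of the two lower bounds really dominates $\operatorname{proj.dim}(I^{\mathfrak p}_x)$ will require invoking the parabolic analogue of the inequality $\mathbf a(w_0 y)+\mathbf a(w_0 x)\geq 2\mathbf a(w_0 x)$ used in Theorem~\ref{thm3}, now phrased for the singular block, and making sure the relevant monotonicity of $\mathbf a$ along the two-sided order holds in that setting. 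The exactness and projective/injective-preservation properties of projective functors on $\mathcal{O}^{\mathfrak p}_0$, and the commutation of Ringel and Koszul dualities, are by now standard and will be quoted rather than reproved.
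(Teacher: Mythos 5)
Your proposal follows exactly the route the paper takes: its proof of Theorem~\ref{thm9} is literally ``mutatis mutandis the proof of Theorem~\ref{thm3}'', with the only extra remark being the $2\mathbf{a}(w_0^{\mathfrak{p}})=2\ell(w_0^{\mathfrak{p}})$ shift in the projective dimensions of injectives in $\mathcal{O}^{\mathfrak{p}}_0$ taken from \cite[Table~2]{CM}, which is precisely the bookkeeping correction you flag. So your plan — splicing injective coresolutions of the parabolic tiltings into the coresolution from Theorem~\ref{thm7}, applying projective functors, and proving a parabolic Lemma~\ref{lem4} via Koszul duality with the singular block — is the intended argument, just written out in more detail than the paper bothers to.
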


\begin{proof}
Mutatis mutandis the proof of Theorem~\ref{thm3}.
Again, one could emphasize the 
$2\mathfrak{a}(w_0^{\mathfrak{p}})=2\ell(w_0^{\mathfrak{p}})$ shift 
for the projective dimension of injective modules in $\mathcal{O}^{\mathfrak{p}}_0$ in
\cite[Table~2]{CM}.
\end{proof}

\subsection{Singular blocks}\label{s4.4}

\begin{corollary}\label{cor10}
All blocks of $\mathcal{O}^{\mathfrak{p}}$ are both Auslander-Ringel 
regular and Auslander regular and have the properties described in Corollary~\ref{cor8}. 
\end{corollary}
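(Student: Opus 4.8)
The plan is to imitate the reduction already carried out for Corollary~\ref{cor6}, simply replacing the ordinary category $\mathcal{O}$ there by its parabolic analogue $\mathcal{O}^{\mathfrak{p}}$. First I would invoke Soergel's combinatorial description of blocks (from \cite{So}), in its parabolic form, to reduce to integral blocks of $\mathcal{O}^{\mathfrak{p}}$: every block of $\mathcal{O}^{\mathfrak{p}}$ is equivalent to an integral block of a parabolic category $\mathcal{O}^{\mathfrak{p}'}$ for a possibly different semisimple Lie algebra, so it suffices to treat integral blocks.

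Next, among integral blocks I would separate the regular and the singular cases. Each regular integral block of $\mathcal{O}^{\mathfrak{p}}$ is equivalent to $\mathcal{O}^{\mathfrak{p}}_0$, so for those the statement is exactly Theorems~\ref{thm7} and \ref{thm9} together with Corollary~\ref{cor8}. For a singular integral block, I would use the translation functor to the corresponding wall, starting from the regular block $\mathcal{O}^{\mathfrak{p}}_0$. The key facts to cite are that this translation functor is exact, sends projectives to projectives, injectives to injectives, and tilting modules to tilting modules, and that it does not increase projective dimension, injective dimension, or the invariants $\mathbf{l}$ and $\mathbf{r}$. Applying this functor to the tilting coresolution of a projective generator produced in Theorem~\ref{thm7} yields a tilting coresolution of a projective generator in the singular block satisfying the required projective-dimension bounds, so the singular block is Auslander-Ringel regular; applying it to the coresolutions used in Theorem~\ref{thm9} and Corollary~\ref{cor8} gives the Auslander regularity and the remaining properties. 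Since every block has finite global dimension (being quasi-hereditary), Auslander-Gorenstein plus finite global dimension gives Auslander regular, so nothing extra is needed there.

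The main obstacle, if any, is purely bookkeeping: one must make sure that the translation-to-the-wall functor on parabolic category $\mathcal{O}$ really does preserve tilting modules and is compatible with the coresolutions from Theorems~\ref{thm7} and \ref{thm9}, exactly as in the non-parabolic setting treated in Corollary~\ref{cor6}. This is standard (translation functors on $\mathcal{O}^{\mathfrak{p}}$ inherit all the good exactness and adjointness properties from those on $\mathcal{O}$, and the parabolic tilting modules are summands of translations of parabolic projectives), so the proof is genuinely a "mutatis mutandis" repetition of Corollary~\ref{cor6}. I would therefore keep the write-up short, stating the reduction to integral blocks via \cite{So}, noting that regular integral blocks are covered by Theorems~\ref{thm7} and \ref{thm9} and Corollary~\ref{cor8}, and that singular integral blocks follow by applying translation-to-the-wall functors exactly as in the proof of Corollary~\ref{cor6}.

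\begin{proof}
By Soergel's combinatorial description of blocks of $\mathcal{O}^{\mathfrak{p}}$,
see \cite{So}, each block of $\mathcal{O}^{\mathfrak{p}}$ is equivalent to an
integral block of $\mathcal{O}^{\mathfrak{p}'}$, possibly for a different
semi-simple Lie algebra and a different parabolic. Hence we may restrict our
attention to integral blocks. Each regular integral block is equivalent to
$\mathcal{O}^{\mathfrak{p}}_0$, so for these the claim is the content of
Theorems~\ref{thm7} and \ref{thm9} together with Corollary~\ref{cor8}. Each
singular integral block is obtained from a regular integral block by applying
translation to the corresponding wall. As in the proof of
Corollary~\ref{cor6}, these translation functors are exact, send projectives to
projectives, injectives to injectives and tiltings to tiltings, and do not
increase projective dimension, injective dimension, nor the values of
$\mathbf{l}$ and $\mathbf{r}$. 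Applying them to the (co)resolutions
constructed in the proofs of Theorems~\ref{thm7} and \ref{thm9} and of
Corollary~\ref{cor8} yields the required statements for an arbitrary singular
integral block. Since every block of $\mathcal{O}^{\mathfrak{p}}$ is
quasi-hereditary and hence of finite global dimension (see \cite{CPS,DR}), the
Auslander-Gorenstein property upgrades to Auslander regularity. The claim
follows.
\end{proof}
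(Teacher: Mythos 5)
Your proposal is correct and follows essentially the same route as the paper, which simply says ``mutatis mutandis the proof of Corollary~\ref{cor6}'': reduce to integral blocks via Soergel's description, handle regular integral blocks by Theorems~\ref{thm7} and \ref{thm9} and Corollary~\ref{cor8}, and pass to singular blocks by translation to the wall using exactness and preservation of projectives, injectives and tiltings without increasing the relevant homological invariants. Your write-up just makes explicit the details the paper leaves implicit.
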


\begin{proof}
Mutatis mutandis the proof of Corollary~\ref{cor6}.
\end{proof}

\subsection{$\mathfrak{sl}_3$-example}\label{s4.5}

For the Lie algebra $\mathfrak{sl}_3$, we have
$W=\{e,s,t,st,ts,w_0=sts=tst\}$. Assume that $W_\mathfrak{p}=\{e,s\}$,
then  $\mathrm{short}({}_{W_{\mathfrak{p}}}\hspace{-2mm}\setminus
\hspace{-1mm} W)=\{e,t,ts\}$.
The projective dimensions
of the indecomposable tilting and projective modules in 
$\mathcal{O}_0^{\mathfrak{p}}$ 
are given by:
\begin{displaymath}
\begin{array}{c||c|c|c}
w:&e&t&ts\\ \hline
\mathrm{proj.dim}(T_w^{\mathfrak{p}}):&0&0&2
\end{array}\qquad
\begin{array}{c||c|c|c}
w:&e&t&ts\\ \hline
\mathrm{proj.dim}(I_w^{\mathfrak{p}}):&4&0&0
\end{array}
\end{displaymath}
The minimal (ungraded) tilting coresolutions of the
{\color{blue}indecomposable projectives} in $\mathcal{O}_0^{\mathfrak{p}}$ are:
\begin{displaymath}
0\to {\color{blue}P_{e}^{\mathfrak{p}}}\to 
T_e^{\mathfrak{p}}\to T_t^{\mathfrak{p}}
\to T_{ts}^{\mathfrak{p}}\to 0,
\end{displaymath}
\begin{displaymath}
0\to {\color{blue}P_{t}^{\mathfrak{p}}}\to T_{e}^{\mathfrak{p}}\to 0,
\end{displaymath}
\begin{displaymath}
0\to {\color{blue}P_{ts}^{\mathfrak{p}}}\to T_{t}^{\mathfrak{p}}\to 0,
\end{displaymath}
The minimal (ungraded) injective coresolutions of the
{\color{blue}indecomposable projectives} in $\mathcal{O}_0^{\mathfrak{p}}$ are:
\begin{displaymath}
0\to {\color{blue}P_{e}^{\mathfrak{p}}}\to 
I_t^{\mathfrak{p}} \to I_s^{\mathfrak{p}}\to I_s^{\mathfrak{p}}\to 
I_{t}^{\mathfrak{p}}\to I_e^{\mathfrak{p}}\to 0,
\end{displaymath}
\begin{displaymath}
0\to {\color{blue}P_{t}^{\mathfrak{p}}}\to I_t^{\mathfrak{p}} \to 0,
\end{displaymath}
\begin{displaymath}
0\to {\color{blue}P_{ts}^{\mathfrak{p}}}\to I_{ts}^{\mathfrak{p}} \to 0,
\end{displaymath}

\section{Auslander-Ringel-Gorenstein strongly standardly stratified algebras}\label{s5}

\subsection{Strongly standardly stratified algebras}\label{s5.1}

In this section we return to the general setup of Subsection~\ref{s2.1}.

For $i\in\{1,2,\dots,n\}$, we denote by 
$\overline{\Delta}_i$ the maximal quotient of $\Delta_i$ 
satisfying $[\overline{\Delta}_i:L_i]=1$. Denote by
$\overline{\nabla}_i$ the maximal submodule of $\nabla_i$ 
satisfying $[\overline{\nabla}_i:L_i]=1$.
The modules $\overline{\Delta}_i$ are called {\em proper standard} and the modules
$\overline{\nabla}_i$ are called {\em proper costandard}.

Recall that $A$ is said to be {\em standardly stratified} provided that the regular module ${}_AA$ has a filtration with standard subquotients and {\em strongly strandardly stratified} (see \cite{Fr}) if, further, each standard module has a filtration with proper standard subquotients.

If $A$ is a strongly standardly stratified algebra, then, by \cite{AHLU}, for each $i$,
there is a unique indecomposable module $T_i$, called a {\em tilting module},
which has both a filtration with standard subquotients and a filtration with
proper costandard subquotients, and, additionally, such that $[T_i:L_i]\neq 0$
while $[T_i:L_j]=0$, for $j>i$. The module 
$\displaystyle T=\bigoplus_{i=1}^n T_i$ is called the
{\em characteristic tilting module} and (the opposite of) 
its endomorphism algebra is called the {\em Ringel dual} of $A$.
For each $M\in A$-mod, there is a unique minimal bounded from the right complex
$\mathcal{T}_\bullet(M)$ of tilting modules which is isomorphic to $M$
in the bounded derived category of $A$. We will denote by $\mathbf{r}(M)$
the maximal non-negative $i$ such that $\mathcal{T}_i(M)\neq 0$.
Note that $\mathbf{r}(M)=0$ if and only if $M$ has a filtration with proper 
costandard subquotients.

\subsection{Auslander-Ringel-Gorenstein  algebras}\label{s5.2}

Let $A$ be strongly standardly stratified.
Then an $A$-module having a filtration
with standard subquotients has a (finite) coresolution by modules in
$\mathrm{add}(T)$. It is also well-known that $T$ has finite projective dimension (see \cite{Fr, AHLU}).
We will say that $A$ is {\em Auslander-Ringel-Gorenstein} 
provided that there is a coresolution
\begin{displaymath}
0\to A\to Q_0\to Q_1\to\dots\to Q_k\to 0, 
\end{displaymath}
such that each $Q_i\in\mathrm{add}(T)$ and $\mathrm{proj.dim}(Q_i)\leq i$, 
for all $i=0,1,\dots,k$.

Since the characteristic tilting module is a
(generalized) tilting module, Auslander-Ringel-Gorenstein property agrees with
$T$-regularity in the terminology of Subsection~\ref{s1.5}.

\section{Regularity phenomena in $\mathcal{S}$-subcategories in
$\mathcal{O}$}\label{s6}

\subsection{$\mathcal{S}$-subcategories in $\mathcal{O}$}\label{s6.1}

We again fix a parabolic subalgebra $\mathfrak{p}$ of $\mathfrak{g}$ containing
$\mathfrak{h}\oplus\mathfrak{n}_+$ and restrict our attention to the integral
part $\mathcal{O}_{\mathrm{int}}$ of $\mathcal{O}$. 

Let $\mathcal{X}$ denote the Serre subcategory of $\mathcal{O}_{\mathrm{int}}$
generated by all simple highest weight modules whose highest weights $\lambda$ are not
anti-dominant with respect to $W_{\mathfrak{p}}$, that is, $w\cdot\lambda< \lambda$ for some $w\in W_{\mathfrak p}$. Denote by
$\mathcal{S}^{\mathfrak{p}}$ the Serre quotient category
$\mathcal{O}_{\mathrm{int}}/\mathcal{X}$, see \cite{FKM,MS}.
From \cite{FKM}, we know that blocks of $\mathcal{S}^{\mathfrak{p}}$
correspond to 
strongly standardly stratified algebras.

Let $\mathcal{S}^{\mathfrak{p}}_0$ be the principal block of $\mathcal{S}^{\mathfrak{p}}$.

\subsection{$\mathcal{S}^{\mathfrak{p}}_0$ is Auslander-Ringel-Gorenstein}\label{s6.2}

\begin{theorem}\label{thm11}
The category  $\mathcal{S}^{\mathfrak{p}}_0$ is Auslander-Ringel-Gorenstein.
\end{theorem}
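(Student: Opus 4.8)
The plan is to mimic the proof of Theorem~\ref{thm1} (and Theorem~\ref{thm7}), but now working in the strongly standardly stratified setting, where the key structural input is Koszulity/linearity of an appropriate complex of tilting modules together with control on projective dimensions of the indecomposable tiltings. First I would identify the Koszul dual of $\mathcal{S}^{\mathfrak{p}}_0$: by the results of \cite{FKM,MS} the category $\mathcal{S}^{\mathfrak{p}}_0$ is, up to equivalence, governed by a strongly standardly stratified algebra whose associated quasi-hereditary "cover" is parabolic category $\mathcal{O}^{\mathfrak{p}}_0$ (the Serre quotient realization), and the Koszul dual of that data should again be a singular block $\mathcal{O}_\lambda$, with the grading picked up from the Koszul grading on $\mathcal{O}$. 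The point is that linear complexes of tilting modules in $\mathcal{S}^{\mathfrak{p}}_0$ form a category equivalent to ${}^{\mathbb{Z}}\mathcal{O}_\lambda$ (or a suitable graded lift thereof), so that the minimal tilting coresolution $\mathcal{T}_\bullet(\Delta_e)$ of the dominant standard $=$ projective module $\Delta_e = P_e$ in $\mathcal{S}^{\mathfrak{p}}_0$ is linear, and the multiplicity of $T_x$ in homological degree $i$ is computed by Kazhdan--Lusztig combinatorics exactly as in the proof of Theorem~\ref{thm1}.

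Next I would extract, from that multiplicity formula, the bound that $T_x$ occurs in $\mathcal{T}_i(P_e)$ only for $i$ in a range of the form $\mathbf{a}(\text{something})\le i\le \ell(\text{something})$, and then match the lower end of this range with $\mathrm{proj.dim}(T_x)$ in $\mathcal{S}^{\mathfrak{p}}_0$. The projective dimensions of the indecomposable tilting modules in the $\mathcal{S}$-subcategory setting are computed in \cite{CM} (the same Table~2 that is cited for $\mathcal{O}^{\mathfrak{p}}_0$), expressed through Lusztig's $\mathbf{a}$-function and its generalizations; the desired inequality $\mathrm{proj.dim}(T_x)\le i$ for every homological position $i$ in which $T_x$ appears should follow from comparing the KL-combinatorial range with that table, just as in the proof of Theorem~\ref{thm7}. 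This handles the regularity property for the tilting coresolution of the dominant projective. To deduce it for all of ${}_AA$, I would apply projective functors $\theta_x$: these are exact, send projectives to projectives, commute with the translation functors used above, and hence applying $\theta_x$ to $\mathcal{T}_\bullet(P_e)$ produces a (possibly non-minimal, non-linear) tilting coresolution of $P_x$ whose homological positions have projective dimension not exceeding those of $\mathcal{T}_\bullet(P_e)$; since every indecomposable projective arises this way, the required coresolution of ${}_AA$ is obtained by taking a direct sum.

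The main obstacle I anticipate is establishing the precise Koszul-dual description and the linearity of $\mathcal{T}_\bullet(\Delta_e)$ in the stratified rather than quasi-hereditary setting: one must be careful that the Serre quotient $\mathcal{S}^{\mathfrak{p}}_0$ inherits a graded structure compatible with the Koszul grading on $\mathcal{O}$, that the notion of "balanced" and the category of linear complexes of tilting modules behave well for strongly standardly stratified algebras (the analogue of \cite{Ma2,Ma}), and that "standard" in $\mathcal{S}^{\mathfrak{p}}_0$ corresponds correctly under the duality. A secondary technical point is that, in the stratified case, the minimal complex $\mathcal{T}_\bullet(M)$ is only bounded from the right (as noted in Subsection~\ref{s5.1}), so one should check that $\mathcal{T}_\bullet(P_e)$ is genuinely finite --- which it is, since $P_e=\Delta_e$ has a finite coresolution by modules in $\mathrm{add}(T)$ --- and that the projective dimension of $T$ is finite, guaranteeing the coresolution $0\to A\to Q_0\to\cdots\to Q_k\to 0$ terminates. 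Once these structural facts are in place, the combinatorial comparison with \cite{CM} is routine, and the conclusion is that $\mathcal{S}^{\mathfrak{p}}_0$ is Auslander-Ringel-Gorenstein; note that we do not (and cannot in general) claim finite global dimension here, which is why the conclusion is Gorenstein rather than regular, consistent with Theorem~\ref{thmA1}.
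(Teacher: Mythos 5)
Your plan is a genuinely different route from the paper's, and as written it has a real gap: everything hinges on structural inputs in the stratified setting that you flag as ``obstacles'' but do not supply, and these are precisely the hard part. Concretely, you need (a) a graded lift of the properly/strongly standardly stratified algebra describing $\mathcal{S}^{\mathfrak{p}}_0$ compatible with the Koszul grading on $\mathcal{O}$, (b) a ``balanced''-type statement guaranteeing that the minimal tilting coresolution of the dominant projective in $\mathcal{S}^{\mathfrak{p}}_0$ is linear, (c) an identification of the category of linear complexes of tilting modules in $\mathcal{S}^{\mathfrak{p}}_0$ with a graded block such as ${}^{\mathbb{Z}}\mathcal{O}_\lambda$, and (d) a formula for $\mathrm{proj.dim}$ of the indecomposable tiltings of $\mathcal{S}^{\mathfrak{p}}_0$. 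None of these is available off the shelf: the algebra here has infinite global dimension and is not a balanced quasi-hereditary algebra, so \cite{Ma2,Ma} do not apply directly, the Koszul-dual object in the stratified setting is not simply a singular block of $\mathcal{O}$ (the duality must distinguish standard from proper standard objects), and \cite[Table~2]{CM} computes dimensions for singular/parabolic blocks of $\mathcal{O}$, not for $\mathcal{S}^{\mathfrak{p}}_0$. There is also a smaller slip: in $\mathcal{S}^{\mathfrak{p}}_0$ the dominant projective is the image of $P_{w_0^{\mathfrak{p}}}$ (projectives are images of $P_w$ with $w$ a longest coset representative), not ``$\Delta_e=P_e$''; in the $\mathfrak{sl}_3$ example of Subsection~\ref{s6.5} it is the image of $P_s$.

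The paper avoids all of this by never leaving $\mathcal{O}_0$: by \cite{FKM}, the indecomposable projectives (resp.\ tiltings) of $\mathcal{S}^{\mathfrak{p}}_0$ are the images of the $P_w$ with $w$ a longest (resp.\ $T_w$ with $w$ a shortest) coset representative, and these are exactly the \emph{admissible} objects of \cite{PW}. By \cite[Lemma~14 and Theorem~15]{PW}, a minimal projective resolution in $\mathcal{O}$ of an admissible tilting involves only admissible projectives; Ringel-dually, the minimal tilting coresolution in $\mathcal{O}_0$ of an admissible projective involves only admissible tiltings, and the exact projection $\mathcal{O}_0\twoheadrightarrow\mathcal{S}^{\mathfrak{p}}_0$ preserves projective dimensions of these objects. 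So one simply projects the regular coresolutions of Theorem~\ref{thm1} down to $\mathcal{S}^{\mathfrak{p}}_0$. If you want to salvage your approach, you would have to prove analogues of (a)--(d) above (in effect, a stratified analogue of the machinery behind Theorems~\ref{thm1} and \ref{thm7}), which is substantially more work than the statement requires; as it stands, the proposal does not constitute a proof.
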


\begin{proof}
By \cite{FKM}, the indecomposable projectives in  $\mathcal{S}^{\mathfrak{p}}_0$
are exactly the images of $P_w$, where $w$ belongs to  
$\mathrm{long}({}_{W_{\mathfrak{p}}}\hspace{-2mm}\setminus\hspace{-1mm} W)$ (the set of longest coset representatives in ${}_{W_{\mathfrak{p}}}\hspace{-2mm}\setminus\hspace{-1mm} W$).
Furthermore, the indecomposable tilting objects in $\mathcal{S}^{\mathfrak{p}}_0$
are exactly the images of $T_w$, where 
$w\in \mathrm{short}({}_{W_{\mathfrak{p}}}\hspace{-2mm}\setminus\hspace{-1mm} W)$.

Note that the above objects in $\mathcal{O}$ are exactly those indecomposable
projective (resp. tilting) objects which are admissible in the sense of
\cite[Lemma~14]{PW}. From \cite[Lemma~14 and Theorem~15]{PW} it follows that 
the minimal projective resolution (in $\mathcal{O}$) of any $T_w$ as above
contains only $P_x$ as above. The Ringel dual of this property is that a minimal
tilting coresolution (in $\mathcal{O}$) of any $P_x$ as above
contains only $T_w$ as above. Since the projection functor
$\mathcal{O}_0\tto \mathcal{S}^{\mathfrak{p}}_0$ is exact
and preserves the projective dimension for the involved projective and tilting
modules, see \cite[Theorem~15]{PW}, 
the claim of our theorem follows from Theorem~\ref{thm1}.
\end{proof}

\subsection{$\mathcal{S}^{\mathfrak{p}}_0$ is Auslander-Gorenstein}\label{s6.3}

\begin{theorem}\label{thm12}
The category  $\mathcal{S}^{\mathfrak{p}}_0$ is Auslander-Gorenstein.
\end{theorem}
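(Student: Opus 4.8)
The plan is to mimic the passage from Theorem~\ref{thm1} to Theorem~\ref{thm3}, but now in the standardly stratified setting of $\mathcal{S}^{\mathfrak{p}}_0$, using Theorem~\ref{thm11} as the starting input exactly as Theorem~\ref{thm1} was used to start the proof of Theorem~\ref{thm3}. Recall that Auslander-Gorenstein means the regular module admits a finite injective coresolution $0\to A\to Q_0\to\cdots\to Q_k\to 0$ with $\mathrm{proj.dim}(Q_i)\le i$; so I need an injective coresolution of each indecomposable projective in $\mathcal{S}^{\mathfrak{p}}_0$ whose $i$-th term has projective dimension at most $i$.

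First I would take, for each admissible $P_x$ (i.e. $x\in\mathrm{long}({}_{W_{\mathfrak{p}}}\!\setminus\! W)$), the minimal tilting coresolution $\mathcal{T}_\bullet$ of its image in $\mathcal{S}^{\mathfrak{p}}_0$, which by Theorem~\ref{thm11} has $\mathrm{proj.dim}(\mathcal{T}_i)\le i$. Then I would splice in a minimal injective coresolution of each tilting summand appearing, obtaining an injective coresolution $\mathcal{I}_\bullet$ of $P_x$. As in the proof of Theorem~\ref{thm3}, it suffices to bound, for each indecomposable injective $J$ of $\mathcal{S}^{\mathfrak{p}}_0$, the lowest homological position in which $J$ can appear in $\mathcal{I}_\bullet$, and to compare this with $\mathrm{proj.dim}(J)$. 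For this I need the $\mathcal{S}^{\mathfrak{p}}_0$-analogue of Lemma~\ref{lem4}: a description of the minimal injective coresolution of each indecomposable tilting module $T$ in $\mathcal{S}^{\mathfrak{p}}_0$, controlling (a) the top homological position, which should equal the relevant $\mathbf{a}$-type value governing $\mathrm{proj.dim}$ via the strongly-stratified Ringel/duality theory, (b) which injectives $J_y$ can occur (via a two-sided-order constraint coming from projective functors, as in Lemma~\ref{lem4.2}), and (c) a lower bound on the homological position of each $J_y$ in terms of its own projective dimension. Combining (b) for the tilting coresolution of $P_x$ with (c) for the injective coresolutions of those tiltings, the two estimates add up to give $\mathrm{proj.dim}(J_y)$ as a lower bound for the position of $J_y$ in $\mathcal{I}_\bullet$, which is exactly the Auslander-Gorenstein condition. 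As before, applying projective (resp. translation) functors reduces the general projective and the general block to the dominant projective in $\mathcal{S}^{\mathfrak{p}}_0$, using that these functors are exact, preserve admissible projectives/tiltings/injectives, and do not raise projective dimension; here I would invoke \cite{PW} again for the exactness and projective-dimension preservation of the Serre quotient $\mathcal{O}_0\tto\mathcal{S}^{\mathfrak{p}}_0$.

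The main obstacle I expect is establishing the analogue of Lemma~\ref{lem4}(iii)—the lower bound on where each injective $J_y$ can appear in the injective coresolution of a tilting module—in the standardly stratified setting. In the quasi-hereditary case of $\mathcal{O}_0$ this rested on Koszulity (the antidominant tilting equals the antidominant simple, its injective coresolution is linear, and one reads off multiplicities from Kazhdan-Lusztig combinatorics via $\mathcal{LI}(\mathcal{O}_0)\simeq{}^{\mathbb{Z}}\mathcal{O}_0$). For $\mathcal{S}^{\mathfrak{p}}_0$ there is no Koszul self-duality available in the same form, so I would instead lift the needed coresolutions from $\mathcal{O}_0$ along the exact Serre quotient functor: the image in $\mathcal{S}^{\mathfrak{p}}_0$ of a minimal injective coresolution in $\mathcal{O}_0$ of an admissible $T_w$ is an injective coresolution (not necessarily minimal) of its image, so the positions of admissible $I_x$ can only shrink, and the bound from Lemma~\ref{lem4} transfers. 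One must then match the relevant $\mathbf{a}$-function/$\pd$ bookkeeping—the shift by $2\mathbf{a}$ of injectives—against the stratified $\mathrm{proj.dim}$ formulas for injectives in $\mathcal{S}^{\mathfrak{p}}_0$ coming from \cite{PW} and \cite{CM}. Once this translation lemma is in place, the rest is the same additivity-of-estimates argument as in Theorem~\ref{thm3}.
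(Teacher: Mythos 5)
Your overall strategy is, at bottom, the paper's: transfer regularity from $\mathcal{O}_0$ through the exact Serre quotient functor using the admissibility machinery of \cite{PW}. The paper's own proof is much shorter than your plan, however: it observes that the indecomposable injectives of $\mathcal{S}^{\mathfrak{p}}_0$ are precisely the images of the admissible $I_w$, $w\in\mathrm{long}({}_{W_{\mathfrak{p}}}\backslash W)$, and then descends the injective coresolutions of the admissible $P_x$ produced by Theorem~\ref{thm3} directly, exactly as in the proof of Theorem~\ref{thm11}; there is no need to re-run the Theorem~\ref{thm3} argument inside $\mathcal{S}^{\mathfrak{p}}_0$ via the tilting coresolution of Theorem~\ref{thm11} and an analogue of Lemma~\ref{lem4}. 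Your longer route would in the end give the same numerics, since by \cite[Theorem~15]{PW} the quotient preserves the projective dimensions of the admissible projectives, tiltings and injectives involved, but it adds bookkeeping that the direct descent avoids.

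There is one genuine gap in your argument, and it sits exactly at the point the paper's proof emphasizes. You assert that the image in $\mathcal{S}^{\mathfrak{p}}_0$ of a minimal injective coresolution in $\mathcal{O}_0$ of an admissible tilting (or projective) is again an injective coresolution, citing only exactness of the quotient functor. Exactness gives an exact complex, but a Serre quotient functor does not send arbitrary injectives to injectives: only the admissible $I_x$ (those with $x\in\mathrm{long}({}_{W_{\mathfrak{p}}}\backslash W)$, i.e.\ whose socle survives the quotient) have injective images. For instance, for $\mathfrak{sl}_2$ with $W_{\mathfrak{p}}=W$, the image of $I_e=\nabla_e$ in $\mathcal{S}^{\mathfrak{p}}_0$ is the simple object, which is not injective there. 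So you must additionally prove that the minimal injective coresolution in $\mathcal{O}$ of each admissible tilting (equivalently, of each admissible projective, after splicing) contains only admissible injectives. This is true, and follows by applying the simple preserving duality $\star$ to the statement from \cite[Lemma~14 and Theorem~15]{PW} quoted in the proof of Theorem~\ref{thm11} (tilting modules are $\star$-self-dual and $\star$ exchanges admissible projectives with admissible injectives), but it is a step your proposal leaves unjustified rather than a formality.
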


\begin{proof}
The indecomposable injectives in $\mathcal S^{\mathfrak p}_0$ are exactly the images of $I_w$ for $w\in \mathrm{long}({}_{W_{\mathfrak{p}}}\hspace{-2mm}\setminus\hspace{-1mm} W)$ and these $I_w\in \cO$ are admissible in the sense of \cite{PW}. 
Thus, the claim follows from Theorem~\ref{thm3} similarly to the proof of Theorem \ref{thm11}.
\end{proof}

\subsection{Singular blocks}\label{s6.4}

\begin{theorem}\label{thm14}
All blocks of $\mathcal{S}^{\mathfrak{p}}$ are both 
Auslander-Ringel-Gorenstein and Auslander-Gorenstein.
\end{theorem}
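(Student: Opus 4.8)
The plan is to reduce the singular case to the regular one exactly as in Corollaries~\ref{cor6} and \ref{cor10}, by exhibiting the singular blocks of $\mathcal{S}^{\mathfrak{p}}$ as images of regular blocks under a functor that is exact, respects the relevant structural modules, and does not increase projective dimension. Concretely, let $\mathcal{S}^{\mathfrak{p}}_\lambda$ be an arbitrary block; by Soergel's combinatorial description (as applied at the level of $\mathcal{O}$ in the proof of Corollary~\ref{cor6}) we may assume $\lambda$ is integral, and then $\mathcal{S}^{\mathfrak{p}}_\lambda$ is obtained from the regular block $\mathcal{S}^{\mathfrak{p}}_0$ (possibly for a smaller root system, with a correspondingly adjusted parabolic) by translation to the $\lambda$-wall. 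The first step is to check that translation functors descend through the Serre quotient $\mathcal{O}_{\mathrm{int}}/\mathcal{X}\to\mathcal{S}^{\mathfrak{p}}$: since translation functors on $\mathcal{O}$ preserve the Serre subcategory $\mathcal{X}$ (they send simples whose highest weight is not $W_{\mathfrak{p}}$-antidominant to modules filtered by such simples, by the usual analysis of translated Verma and simple modules), they induce well-defined exact functors on the quotient, which I will denote by the same symbol.

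The second step is to verify that these induced translation functors on $\mathcal{S}^{\mathfrak{p}}$ send projectives to projectives, tilting modules to tilting modules, and injectives to injectives. This follows by combining the analogous (known) statements on $\mathcal{O}$ with the identification, from \cite{FKM,PW}, of the indecomposable projective (resp. tilting, resp. injective) objects in $\mathcal{S}^{\mathfrak{p}}_0$ with images of the admissible $P_w$ (resp. $T_w$, resp. $I_w$): one checks that translation to the wall sends an admissible indecomposable to a direct sum of admissible indecomposables (or zero), which is again a statement about $W_{\mathfrak{p}}$-coset representatives that one reads off from Kazhdan--Lusztig combinatorics together with \cite[Lemma~14]{PW}. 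Granting this, the functor cannot increase projective dimension (an exact functor carrying a projective resolution to a complex of projectives) nor increase injective dimension, nor increase $\mathbf{r}$ (it carries a tilting coresolution to a complex of tiltings, possibly non-minimal).

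The third step is the actual deduction. For Auslander-Ringel-Gorenstein: take the tilting coresolution $0\to A\to Q_0\to\cdots\to Q_k\to 0$ with $\mathrm{proj.dim}(Q_i)\le i$ furnished by Theorem~\ref{thm11} for $\mathcal{S}^{\mathfrak{p}}_0$ (applied to the appropriate smaller data), apply the induced translation functor $\theta$ to the wall to each term, and use Step~2 to conclude that $\theta Q_i$ is again in $\mathrm{add}(T)$ for the singular block with $\mathrm{proj.dim}(\theta Q_i)\le\mathrm{proj.dim}(Q_i)\le i$; exactness of $\theta$ ensures the resulting complex is still a coresolution of (the projective generator of) the singular block. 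For Auslander-Gorenstein one argues identically starting from Theorem~\ref{thm12}, translating a coresolution of $A$ by injectives of bounded projective dimension. Thus all blocks of $\mathcal{S}^{\mathfrak{p}}$ have both properties.

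I expect the main obstacle to be Step~2, and within it specifically the compatibility of ``admissibility'' with translation to a wall: one must make sure that $\theta^{\mathrm{on}}_\lambda$ does not produce non-admissible summands (which would be killed in the quotient and could silently shorten or distort the resolution) and, dually, does not merge two admissible summands in a way that breaks the projective-dimension bookkeeping. This is bread-and-butter Kazhdan--Lusztig/coset combinatorics combined with \cite{PW}, but it is the only place where something genuinely needs to be verified rather than quoted; everything else is the same ``translate the witnessing resolution to the wall'' argument already used twice in the paper, so I would phrase the proof as \emph{mutatis mutandis} the proofs of Corollaries~\ref{cor6} and \ref{cor10}, with the one added remark about translation functors descending to $\mathcal{S}^{\mathfrak{p}}$ and preserving admissible projectives, tiltings and injectives.
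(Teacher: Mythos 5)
Your proposal is exactly the paper's argument: the paper proves Theorem~\ref{thm14} by ``mutatis mutandis the proof of Corollary~\ref{cor6}'', i.e.\ reducing singular blocks to the regular block of $\mathcal{S}^{\mathfrak{p}}$ via translation to walls and transporting the witnessing coresolutions from Theorems~\ref{thm11} and \ref{thm12}, which is what you do. Your extra remarks on translation functors descending to the Serre quotient and preserving admissible projectives, tiltings and injectives are precisely the ``mutatis mutandis'' details the paper leaves implicit, so the proposal is correct and follows the same route.
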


\begin{proof}
Mutatis mutandis the proof of Corollary~\ref{cor6}
\end{proof}

\subsection{$\mathfrak{sl}_3$-example}\label{s6.5}

For the Lie algebra $\mathfrak{sl}_3$, we have
$W=\{e,s,t,st,ts,w_0=sts=tst\}$. Assume that $W_\mathfrak{p}=\{e,s\}$,
then  $\mathrm{long}({}_{W_{\mathfrak{p}}}\hspace{-2mm}\setminus
\hspace{-1mm} W)=\{s,st,w_0\}$.
The projective dimensions
of the indecomposable tilting and injective modules in 
$\mathcal{S}_0^{\mathfrak{p}}$ 
are given by:
\begin{displaymath}
\begin{array}{c||c|c|c}
w:&e&t&ts\\ \hline
\mathrm{proj.dim}(T_w):&0&1&1
\end{array}\qquad
\begin{array}{c||c|c|c}
w:&s&st&w_0\\ \hline
\mathrm{proj.dim}(I_w):&2&2&0
\end{array}
\end{displaymath}
The minimal (ungraded) tilting coresolutions of the
{\color{blue}indecomposable projectives} in $\mathcal{S}_0^{\mathfrak{p}}$ are:
\begin{displaymath}
0\to {\color{blue}P_{s} }\to 
T_e\to T_t\to 0,
\end{displaymath}
\begin{displaymath}
0\to {\color{blue}P_{st}}\to T_{e}\to T_{ts}\to 0,
\end{displaymath}
\begin{displaymath}
0\to {\color{blue}P_{w_0}}\to T_{e}\to 0,
\end{displaymath}
The minimal (ungraded) injective coresolutions of the
{\color{blue}indecomposable projectives} in $\mathcal{S}_0^{\mathfrak{p}}$ are:
\begin{displaymath}
0\to {\color{blue}P_{s}}\to I_{w_0}\to I_{w_0}\to  I_s \to 0,
\end{displaymath}
\begin{displaymath}
0\to {\color{blue}P_{st}}\to I_{w_0}\to I_{w_0}\to  I_{st} \to 0,
\end{displaymath}
\begin{displaymath}
0\to {\color{blue}P_{w_0}}\to I_{w_0} \to 0,
\end{displaymath}

\section{Applications to the cohomology of twisting and Serre functors}\label{s7}

\subsection{Twisting and Serre functors on $\mathcal{O}$}\label{s7.1}

For a simple reflection $s$, we denote by $\top_s$ the corresponding twisting 
functor on $\mathcal{O}$, see \cite{AS}. For $w\in W$, with a fixed reduced
expression $w=s_1s_2\cdots s_k$, we denote by $\top_w$ the composition
$\top_{s_1}\top_{s_2}\cdots\top_{s_k}$ and note that it does
not depend on the choice of a reduced expression by \cite{KM}.

All functors $\top_w$ are right exact, functorially commute with 
projective functors, acyclic on Verma modules
and the corresponding derived functors are self-equivalences of the
derived category of $\mathcal{O}$. Furthermore, we have
$\top_{w_0}P_x\cong T_{w_0x}$ and $\top_{w_0}T_x\cong I_{w_0x}$,
for all $x\in W$. We refer to \cite{AS,KM} for all details.

The functor $(\mathcal{L}\top_{w_0})^2$ is a Serre functor on 
$\mathcal{D}^b(\mathcal{O}_0)$, see \cite{MS2}.

\subsection{Auslander regularity via Serre functors}\label{s7.2}

Let $A$ be a finite dimensional associative algebra of finite global dimension
over an algebraically closed field $\Bbbk$. Then the left derived
$\mathcal{L}\mathbf{N}$ of 
the Nakayama functor $\mathbf{N}=A^*\otimes_A{}_-$ for $A$ is a Serre functor on 
$\mathcal{D}^b(A)$.

Recall that $L_i$, where $i=1,2,\dots,k$, is a complete and irredundant
list of simple $A$-modules, $P_i$ denotes the indecomposable projective cover of $L_i$
and $I_i$ denotes the indecomposable injective envelope of $L_i$.
Let $P$ be a basic projective generator of $A$-mod and 
$I$ a basic injective cogenerator of $A$-mod.

\begin{lemma}\label{lem21}
For $M\in A$-mod, $j\in\{1,2,\dots,k\}$ and $i\in\mathbb{Z}_{\geq 0}$, we have 
\begin{displaymath}
\dim \mathrm{Ext}^i_A(M,P_j)= (\mathcal{L}_i \mathbf{N}(M):L_j). 
\end{displaymath}
\end{lemma}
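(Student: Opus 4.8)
\textbf{Proof strategy for Lemma~\ref{lem21}.} The plan is to compute $\mathrm{Ext}^i_A(M,P_j)$ by passing through the Serre functor $\mathcal{L}\mathbf{N}$ and the standard adjunction/duality between $\Hom(-,A)$ and the Nakayama functor. First I would recall the defining property of a Serre functor: for objects $X,Y$ in $\mathcal{D}^b(A)$ one has a bifunctorial isomorphism
\begin{displaymath}
\Hom_{\mathcal{D}^b(A)}(X,Y)\cong D\,\Hom_{\mathcal{D}^b(A)}(Y,\mathcal{L}\mathbf{N}(X)),
\end{displaymath}
where $D=\Hom_\Bbbk(-,\Bbbk)$. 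Applying this with $X=M$ (concentrated in degree $0$), $Y=P_j[i]$ gives
\begin{displaymath}
\mathrm{Ext}^i_A(M,P_j)\cong\Hom_{\mathcal{D}^b(A)}(M,P_j[i])\cong D\,\Hom_{\mathcal{D}^b(A)}(P_j,\mathcal{L}\mathbf{N}(M)[-i])\cong D\,\Hom_{\mathcal{D}^b(A)}(P_j[i],\mathcal{L}\mathbf{N}(M)).
\end{displaymath}

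Next I would use that $P_j$ is projective, so $\Hom_{\mathcal{D}^b(A)}(P_j[i],C^\bullet)=H^{-i}\Hom_A(P_j,C^\bullet)$ for any complex $C^\bullet$ (projectives see only the cohomology in the relevant degree). Taking $C^\bullet$ to be a complex representing $\mathcal{L}\mathbf{N}(M)$, this computes $\Hom_{\mathcal{D}^b(A)}(P_j[i],\mathcal{L}\mathbf{N}(M))\cong\Hom_A(P_j,\mathcal{L}_i\mathbf{N}(M))$. Finally, for any finite-dimensional module $N$ the space $\Hom_A(P_j,N)$ has dimension equal to the multiplicity $[N:L_j]$ of the simple $L_j$ in a composition series of $N$, since $P_j$ is the projective cover of $L_j$. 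Combining the three displayed identities and taking dimensions (the functor $D$ preserves dimension) yields
\begin{displaymath}
\dim\mathrm{Ext}^i_A(M,P_j)=\dim\Hom_A(P_j,\mathcal{L}_i\mathbf{N}(M))=(\mathcal{L}_i\mathbf{N}(M):L_j),
\end{displaymath}
which is exactly the claim.

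The only genuine point requiring care — the place I expect to be the main obstacle, though it is more bookkeeping than depth — is the second step: justifying that one may replace the derived Hom out of $P_j[i]$ by the $i$-th cohomology of the underived Hom complex, i.e.\ that $P_j$ being projective kills higher Ext and lets one read off $\mathcal{L}_i\mathbf{N}(M)$ termwise. This is standard (take a projective resolution of $M$, apply $\mathbf{N}=A^*\otimes_A-$, and note $\Hom_A(P_j,-)$ is exact), but one should be mildly careful that $\mathcal{L}\mathbf{N}(M)$ is represented by a complex of projectives so that "$i$-th homology" is unambiguous, or alternatively argue directly with the hyper-Ext spectral sequence which degenerates because $\Hom_A(P_j,-)$ is exact. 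Everything else is a formal consequence of the Serre functor identity together with $A$ having finite global dimension (so that $\mathcal{L}\mathbf{N}$ is indeed a Serre functor on $\mathcal{D}^b(A)$, as recalled in the paragraph preceding the lemma).
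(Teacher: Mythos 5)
Your proof is correct, and it is a close cousin of the paper's argument but runs through a slightly different formal mechanism. The paper never invokes the defining Serre-duality isomorphism with the $\Bbbk$-dual $D$: it only uses that $\mathcal{L}\mathbf{N}$ is a self-equivalence of $\mathcal{D}^b(A)$ together with the fact that the Nakayama functor takes $P_j$ to $I_j$ (no derived correction needed since $P_j$ is projective), so that
$\mathrm{Ext}^i_A(M,P_j)\cong \mathrm{Hom}_{\mathcal{D}^b(A)}(\mathcal{L}\mathbf{N}(M),I_j[i])$,
and then reads off the composition multiplicity using injectivity of $I_j$, namely $\dim\mathrm{Hom}_{\mathcal{D}^b(A)}(C,I_j[i])=(\mathcal{H}^{-i}(C):L_j)$. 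You instead apply the bifunctorial isomorphism $\mathrm{Hom}(X,Y)\cong D\,\mathrm{Hom}(Y,\mathcal{L}\mathbf{N}(X))$ directly with $X=M$, $Y=P_j[i]$, and then read off the multiplicity using projectivity of $P_j$ via $\dim\mathrm{Hom}_A(P_j,N)=(N:L_j)$ (which uses that $\Bbbk$ is algebraically closed, as in the paper's setup). The two computations are mirror images: yours needs the Serre-duality pairing but not the identification $\mathcal{L}\mathbf{N}(P_j)\cong I_j$, while the paper's needs that identification but only the equivalence property of $\mathcal{L}\mathbf{N}$. The step you flag as the main point of care, replacing $\mathrm{Hom}_{\mathcal{D}^b(A)}(P_j[i],\mathcal{L}\mathbf{N}(M))$ by $\mathrm{Hom}_A(P_j,\mathcal{L}_i\mathbf{N}(M))$, is indeed standard (exactness of $\mathrm{Hom}_A(P_j,-)$ collapses the relevant spectral sequence), and is exactly parallel to the injective-side computation the paper leaves implicit in ``the claim of the lemma follows''.
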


\begin{proof}
Being a Serre functor, $\mathcal{L}\mathbf{N}$ is a self-equivalence of 
$\mathcal{D}^b(A)$. Therefore, we have
\begin{displaymath}
\begin{array}{rcl}
\mathrm{Ext}^i_A(M,P_j)&=&\mathrm{Hom}_{\mathcal{D}^b(A)}(M,P_j[i])\\
&=&\mathrm{Hom}_{\mathcal{D}^b(A)}(\mathcal{L} \mathbf{N}(M),\mathcal{L} \mathbf{N}(P_j[i]))\\
&=&\mathrm{Hom}_{\mathcal{D}^b(A)}(\mathcal{L} \mathbf{N}(M),I_j[i]).
\end{array}
\end{displaymath}
The claim of the lemma follows.
\end{proof}

The above observation has the following consequence:

\begin{proposition}\label{prop22}
The algebra $A$ is Auslander regular if and only if, 
for any simple $A$-module $L_j$,
we have $\mathcal{L}_i\mathbf{N}(L_j)=0$, 
for all $i<\mathrm{proj.dim}(I_j)$.
\end{proposition}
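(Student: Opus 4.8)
The plan is to deduce Proposition~\ref{prop22} directly from Lemma~\ref{lem21} by reformulating the defining condition of Auslander regularity in terms of the vanishing of higher Nakayama cohomology of simples. Since $A$ has finite global dimension, the only thing to verify is the Auslander-Gorenstein condition: the existence of a minimal injective coresolution $0\to A\to Q_0\to\dots\to Q_k\to 0$ with $\mathrm{proj.dim}(Q_i)\leq i$ for all $i$. Because the minimal injective coresolution of ${}_AA$ is the direct sum of the minimal injective coresolutions of the indecomposable projectives $P_j$, and the multiplicity of $I_\ell$ in the $i$-th term of the minimal injective coresolution of $P_j$ is $\dim\mathrm{Ext}^i_A(P_j,I_\ell)^{\mathrm{op}}$-type data — more precisely, it is governed by $\dim\mathrm{Ext}^i_{A}(L_\ell, \text{socle layers})$ — I should instead phrase everything through the dual. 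The cleanest route: $A$ is Auslander-Gorenstein if and only if for every indecomposable injective $I_\ell$ appearing in homological degree $i$ of the minimal injective coresolution of some $P_j$, one has $\mathrm{proj.dim}(I_\ell)\leq i$; equivalently, $I_\ell$ does not appear in degree $i$ of the coresolution of any $P_j$ whenever $i<\mathrm{proj.dim}(I_\ell)$.

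Next I would translate "$I_\ell$ appears in degree $i$ of the minimal injective coresolution of $P_j$" into an Ext statement: this multiplicity equals $\dim\mathrm{Ext}^i_A(L_\ell, P_j)$, since applying $\mathrm{Hom}_A(L_\ell,-)$ to the minimal injective coresolution of $P_j$ and using that $\mathrm{Hom}_A(L_\ell, I_m)$ is nonzero exactly for $m=\ell$ (one-dimensional) shows the $i$-th cohomology computes both $\mathrm{Ext}^i_A(L_\ell,P_j)$ and the multiplicity of $I_\ell$ in degree $i$. Therefore the Auslander-Gorenstein condition becomes: for all $j,\ell$ and all $i<\mathrm{proj.dim}(I_\ell)$, $\mathrm{Ext}^i_A(L_\ell,P_j)=0$. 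Now apply Lemma~\ref{lem21}, which gives $\dim\mathrm{Ext}^i_A(L_\ell,P_j)=(\mathcal{L}_i\mathbf{N}(L_\ell):L_j)$. Summing over $j$, the condition "$\mathrm{Ext}^i_A(L_\ell,P_j)=0$ for all $j$" is equivalent to $\mathcal{L}_i\mathbf{N}(L_\ell)=0$. Hence Auslander-Gorenstein $\Longleftrightarrow$ $\mathcal{L}_i\mathbf{N}(L_\ell)=0$ for all $\ell$ and all $i<\mathrm{proj.dim}(I_\ell)$, which is exactly the assertion (given finite global dimension, Auslander-Gorenstein is the same as Auslander regular).

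For the converse direction I need to check that the stated vanishing actually forces the projective-dimension bound on the whole coresolution, not just on the simples; but this is automatic, since the multiplicity of $I_\ell$ in degree $i$ of the minimal injective coresolution of $A$ is $\sum_j (\mathcal{L}_i\mathbf{N}(L_\ell):L_j)=\dim\mathcal{L}_i\mathbf{N}(L_\ell)$ by the above, so $I_\ell$ literally cannot occur in degree $i<\mathrm{proj.dim}(I_\ell)$. One should also record the elementary fact $\mathrm{proj.dim}(I_\ell)\leq\mathrm{gl.dim}(A)<\infty$, so the coresolution is finite and the indexing makes sense.

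The main obstacle — really the only point requiring care — is the clean identification of "multiplicity of $I_\ell$ in the $i$-th term of the minimal injective coresolution of $P_j$" with $\dim\mathrm{Ext}^i_A(L_\ell,P_j)$; one must argue that the minimal injective coresolution has no homotopy contributing to $\mathrm{Hom}(L_\ell,-)$ (i.e. the differentials become zero after applying $\mathrm{Hom}_A(L_\ell,-)$, because a minimal coresolution has image of each differential inside the radical of the next injective, while $\mathrm{Hom}_A(L_\ell,-)$ only sees socles), and that $\mathrm{soc}(I_m)=L_m$ makes $\dim\mathrm{Hom}_A(L_\ell,I_m)=\delta_{\ell m}$. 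After that, everything is a direct substitution into Lemma~\ref{lem21}.
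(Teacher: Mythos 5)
Your proposal is correct and takes essentially the same route as the paper: the paper's proof simply records (``by definition'') that Auslander regularity amounts to $\mathrm{Ext}^i_A(L_\ell,A)=0$ for all $i<\mathrm{proj.dim}(I_\ell)$ --- which is precisely your multiplicity computation for the minimal injective coresolution via $\mathrm{Hom}_A(L_\ell,-)$ --- and then invokes Lemma~\ref{lem21} exactly as you do. Your write-up just makes that standard translation explicit.
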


\begin{proof}
By definition, $A$ is Auslander regular if and only if, 
for any simple $A$-module $L_j$, we have 
$\mathrm{Ext}^i(L_j,A)=0$ unless $i\geq \mathrm{proj.dim}(I_j)$.
Now the necessary claim follows from Lemma~\ref{lem21}.
\end{proof}

\subsection{Cohomology of twisting and Serre 
functors for category $\mathcal{O}_0$}\label{s7.3}

\begin{corollary}\label{cor23}
For $w\in W$, we have $(\mathcal{L}_i\top_{w_0})^2L_w=0$, for all $0\leq i<2\mathbf{a}(w_0w)$.
\end{corollary}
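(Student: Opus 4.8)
The plan is to recognize Corollary~\ref{cor23} as an immediate consequence of Proposition~\ref{prop22} applied to the principal block $\mathcal{O}_0$, together with two facts already recorded in the excerpt: that $(\mathcal{L}\top_{w_0})^2$ is a Serre functor on $\mathcal{D}^b(\mathcal{O}_0)$ (Subsection~\ref{s7.1}), and that $\mathcal{O}_0$ is Auslander regular (Theorem~\ref{thm3}). The only genuine point to address is that Proposition~\ref{prop22} is phrased in terms of the derived Nakayama functor $\mathcal{L}\mathbf{N}$, whereas the corollary speaks of $(\mathcal{L}\top_{w_0})^2$; so the first step is to reconcile these two Serre functors.

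First I would invoke the general fact that a Serre functor on $\mathcal{D}^b(A)$, for $A$ of finite global dimension, is unique up to isomorphism. Since both $\mathcal{L}\mathbf{N}$ and $(\mathcal{L}\top_{w_0})^2$ are Serre functors on $\mathcal{D}^b(\mathcal{O}_0)$, they are isomorphic as functors. In particular, for each $w\in W$ and each $i\in\mathbb{Z}_{\geq 0}$ there is an isomorphism of $\mathcal{O}_0$-modules $\mathcal{L}_i\mathbf{N}(L_w)\cong (\mathcal{L}_i\top_{w_0})^2 L_w$ (taking cohomology of the isomorphic complexes; here I use that $L_w$ is a module placed in degree $0$, so the homology of the image complex in degree $-i$ is what is being denoted). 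Thus the vanishing statement for $(\mathcal{L}\top_{w_0})^2$ is equivalent to the vanishing statement for $\mathcal{L}\mathbf{N}$.

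Next I would apply Proposition~\ref{prop22}: since $\mathcal{O}_0$ is Auslander regular by Theorem~\ref{thm3}, we have $\mathcal{L}_i\mathbf{N}(L_w)=0$ for all $i<\mathrm{proj.dim}(I_w)$. By \cite{Ma3,Ma4} (quoted in the proof of Theorem~\ref{thm3}), $\mathrm{proj.dim}(I_w)=2\mathbf{a}(w_0w)$. Combining this with the isomorphism from the previous paragraph yields $(\mathcal{L}_i\top_{w_0})^2 L_w=0$ for all $0\leq i<2\mathbf{a}(w_0w)$, which is exactly the claim.

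The argument is essentially bookkeeping, so there is no real obstacle; the one subtlety worth a sentence in the write-up is the passage from an isomorphism of Serre functors to the equality of the individual homology modules $\mathcal{L}_i\mathbf{N}(L_w)$ and $(\mathcal{L}_i\top_{w_0})^2L_w$. This is harmless because an isomorphism in $\mathcal{D}^b(\mathcal{O}_0)$ between two complexes induces isomorphisms on all cohomology objects, and both sides are computed by applying the respective (derived) functor to the module $L_w$ concentrated in a single degree. I would state this explicitly and then conclude.
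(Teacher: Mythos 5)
Your proposal is correct and follows essentially the same route as the paper: invoke Auslander regularity of $\mathcal{O}_0$ (Theorem~\ref{thm3}), the formula $\mathrm{proj.dim}(I_w)=2\mathbf{a}(w_0w)$ from \cite{Ma3,Ma4}, and Proposition~\ref{prop22}. The paper leaves implicit the identification of $(\mathcal{L}\top_{w_0})^2$ with the derived Nakayama functor via uniqueness of Serre functors, which you spell out explicitly — a harmless and accurate addition.
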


\begin{proof}
By Theorem~\ref{thm3}, $\mathcal{O}_0$ is Auslander regular. By
the main results of \cite{Ma3,Ma4}, the projective dimension of
$I_w$ equals $2\mathbf{a}(w_0w)$. Therefore the claim 
follows  from Proposition~\ref{prop22}.
\end{proof}

Corollary~\ref{cor23} admits the following refinement.

\begin{proposition}\label{prop24}
For $w\in W$, we have  $\mathcal{L}_i\top_{w_0}L_w=0$, for all $0\leq i<\mathbf{a}(w_0w)$.
\end{proposition}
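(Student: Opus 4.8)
The plan is to prove the sharper vanishing $\mathcal{L}_i \top_{w_0} L_w = 0$ for $0 \le i < \mathbf{a}(w_0 w)$ by the same Serre-functor mechanism as in Proposition~\ref{prop22}, but applied to the \emph{single} twisting functor $\top_{w_0}$ rather than its square. First I would set up the analogue of Lemma~\ref{lem21}: since $\mathcal{L}\top_{w_0}$ is a self-equivalence of $\mathcal{D}^b(\mathcal{O}_0)$ sending $P_x \mapsto T_{w_0 x}$ (and $T_x \mapsto I_{w_0 x}$), for any $M \in \mathcal{O}_0$ we have
\begin{displaymath}
\Hom_{\mathcal{D}^b(\mathcal{O}_0)}(M, P_x[i]) \cong \Hom_{\mathcal{D}^b(\mathcal{O}_0)}(\mathcal{L}\top_{w_0} M, T_{w_0 x}[i]).
\end{displaymath}
Thus $\Ext^i(M, P_x) \cong \Ext^i(\mathcal{L}\top_{w_0} M, T_{w_0 x})$, and taking $M = L_w$ I need to control the left-hand side: $\dim \Ext^i_{\mathcal{O}_0}(L_w, P_x)$ is the multiplicity $(\mathcal{L}_i \top_{w_0}^{-1}$-type term$\dots)$ — more precisely, I would phrase it so that $\mathcal{L}_i \top_{w_0} L_w = 0$ for all $i$ in the desired range is equivalent to the vanishing of $\Ext^i_{\mathcal{O}_0}(L_w, P_x)$ for all $x$ and all $i < \mathbf{a}(w_0 w)$, after transporting along the equivalence (noting $\mathcal{L}\top_{w_0}$ is its own ``inverse up to the other twisting functor'', or simply working with $\mathcal{L}\top_{w_0}^{-1}$ via the adjoint $\mathcal{G}_{w_0}$).

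The key input is then: for every $x \in W$ and every $i < \mathbf{a}(w_0 w)$, one has $\Ext^i_{\mathcal{O}_0}(L_w, P_x) = 0$, equivalently $\Ext^i_{\mathcal{O}_0}(L_w, A) = 0$ for $i < \mathbf{a}(w_0 w)$. But this is exactly a lower bound on the projective dimension of $I_w$ used implicitly in the proof of Theorem~\ref{thm3}: there we showed, via the glued injective coresolution $\mathcal{I}_\bullet$ of $P_e$ and Lemma~\ref{lem4}, that $I_w$ first appears in homological position $\ge 2\mathbf{a}(w_0 w)$, and in fact the finer statement from Lemma~\ref{lem4}\eqref{lem4.3} together with Theorem~\ref{thm1} gives the two-step estimate $\mathbf{a}(w_0 y) + \mathbf{a}(w_0 x) \ge 2\mathbf{a}(w_0 x)$. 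The ``first half'' of this estimate — that in the injective coresolution of $T_{w_0 y}$, the injective $I_x$ appears only in positions $\ge \mathbf{a}(w_0 x)$, and that $T_{w_0 y}$ appears in $\mathcal{T}_\bullet(P_e)$ only in positions $\ge \mathbf{a}(w_0 y)$ — is precisely what controls the single twisting functor, since applying $\mathcal{L}\top_{w_0}$ to $P_e$ yields the minimal tilting coresolution $\mathcal{T}_\bullet(P_e)$ (up to the identification $\top_{w_0} P_x \cong T_{w_0 x}$), and $\mathcal{L}_i \top_{w_0} P_w$ is built from the $\theta_w$-image of this complex. So I would: (1) record that $\mathcal{L}\top_{w_0}$ applied to $P_e$ has cohomology concentrated so that $\mathcal{L}_i\top_{w_0}$ is computed by $\mathcal{T}_\bullet(P_e)$; (2) apply $\theta_w$ and use exactness of projective functors plus $\top_{w_0}\theta_w \cong \theta_w \top_{w_0}$; (3) read off from Theorem~\ref{thm1} that the relevant homological positions are $\ge \mathbf{a}(w_0 w)$; (4) conclude by translating back through the Serre/twisting equivalence exactly as in Lemma~\ref{lem21} and Proposition~\ref{prop22}.

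The main obstacle I anticipate is bookkeeping the difference between $\top_{w_0}$ and $\top_{w_0}^{-1}$ (equivalently the adjoint functors $\mathcal{G}_{w_0}$) when passing between ``$\Ext$ into projectives'' and ``cohomology of $\top_{w_0}$'': Lemma~\ref{lem21} is stated for the Nakayama functor $\mathcal{L}\mathbf{N}$, and while $(\mathcal{L}\top_{w_0})^2 \cong \mathcal{L}\mathbf{N}$ on $\mathcal{O}_0$ (it is the Serre functor), a \emph{single} $\mathcal{L}\top_{w_0}$ is only ``half'' of this, so the analogue of Lemma~\ref{lem21} will involve $\Hom(L_w, T_{w_0 x}[i])$ rather than $\Hom(L_w, P_x[i])$, i.e.\ extensions into tilting rather than projective modules. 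I would therefore need the auxiliary fact that $\Ext^i_{\mathcal{O}_0}(L_w, T_x) = 0$ for $i < \mathbf{a}(w_0 w)$ uniformly in $x$; this follows from $\mathbf{r}$-degree considerations on the minimal projective resolution of $T_x$ combined with the characterization $\Hom(L_w, T_x[i]) \ne 0 \Rightarrow \mathcal{T}_{-i}$-term of the projective resolution of $L_w$ hits $T_x$, and here the bound $\mathbf{a}(w_0 w)$ re-enters through the fact (essentially Theorem~\ref{thm1} dualized, cf.\ Corollary~\ref{cor2}\eqref{cor2.1}) that $\mathbf{r}(\mathcal{P}_{-i}(T_x)) \le i$. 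Once the correct ``half-Nakayama'' version of Lemma~\ref{lem21} is in place, the rest is a direct transcription of the argument for Theorem~\ref{thm3}, using only its ``first estimate'' rather than the doubled one.
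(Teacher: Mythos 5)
Your final reduction is the correct one and coincides with the paper's: everything comes down to the auxiliary fact that $\mathrm{Ext}^i_{\mathcal{O}}(L_w,T_x)=0$ for all $x\in W$ and all $i<\mathbf{a}(w_0w)$, after which applying the equivalence $\mathcal{L}\top_{w_0}$, which sends tilting modules to injective modules, detects the cohomology $\mathcal{L}_i\top_{w_0}L_w$ and yields the claim. (Your first formulation of the key input, vanishing of $\mathrm{Ext}^i(L_w,P_x)$, is the statement relevant for the squared functor and you correct it yourself at the end; note also that $\mathcal{L}\top_{w_0}P_e$ is just the module $T_{w_0}$, since projectives have Verma flags and are $\top_{w_0}$-acyclic, so the intermediate steps about $\mathcal{L}_i\top_{w_0}P_w$ do not bear on the statement, which concerns simples.)

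The genuine gap is your justification of this auxiliary Ext-vanishing. The mechanism you invoke --- that $\mathrm{Hom}_{\mathcal{D}^b(\mathcal{O})}(L_w,T_x[i])\neq 0$ forces a nonzero term in position $-i$ of the minimal tilting complex representing $L_w$ --- only bounds $i$ from \emph{above}; this is precisely how Proposition~\ref{prop27} produces the bound $i\leq\ell(w_0w)$, and it cannot produce the lower bound $\mathbf{a}(w_0w)$. Similarly, Corollary~\ref{cor2}\eqref{cor2.1}, i.e.\ $\mathbf{r}(\mathcal{P}_{-i}(T))\leq i$, constrains the terms of the projective resolution of $T$ but says nothing about \emph{which} $P_w$ can occur in which position unless you also know $\mathbf{r}(P_w)=\mathbf{a}(w_0w)$ (the Ringel dual of the main results of \cite{Ma3,Ma4}, which the paper quotes only later, in the proof of Proposition~\ref{prop25}). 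With that extra input your route does close up: using the self-dualities $T_x^\star\cong T_x$ and $L_w^\star\cong L_w$, one has $\mathrm{Ext}^i(L_w,T)\cong\mathrm{Ext}^i(T,L_w)$, and $P_w$ occurring in $\mathcal{P}_{-i}(T)$ forces $\mathbf{a}(w_0w)=\mathbf{r}(P_w)\leq\mathbf{r}(\mathcal{P}_{-i}(T))\leq i$. But as written, the point where $\mathbf{a}(w_0w)$ actually enters is missing. The paper instead proves the auxiliary fact directly: by Koszul self-duality the minimal injective coresolution of $T_{w_0}=L_{w_0}$ is linear, so $I_x$ occurs in it only in positions $\geq\mathbf{a}(w_0x)$, which settles the case $T=T_{w_0}$; a general tilting module is a summand of some $\theta T_{w_0}$, and one concludes by adjunction, $\mathrm{Ext}^i(L_w,\theta T_{w_0})\cong\mathrm{Ext}^i(\theta' L_w,T_{w_0})$, together with the fact that every composition factor $L_z$ of $\theta' L_w$ satisfies $\mathbf{a}(w_0z)\geq\mathbf{a}(w_0w)$. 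Either this argument or the input $\mathbf{r}(P_w)=\mathbf{a}(w_0w)$ has to be supplied; your sketch contains neither.
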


\begin{proof}
The injective resolution $\mathcal{I}_\bullet(L_{w_0})$ of $T_{w_0}=L_{w_0}$ 
is linear and is a dominant standard object 
in the category of linear complexes of injective modules in $\mathcal{O}_0$, by 
the Koszul self-duality of $\mathcal{O}_0$, see \cite{So}. Therefore, for $x\in W$,
the module $I_x$ can only appear as a summand of $\mathcal{I}_i(L_{w_0})$, for
$\mathbf{a}(w_0x\inv)=\mathbf{a}(w_0x)\leq i$. This means that
\begin{displaymath}
\mathrm{Ext}^i_{\mathcal{O}}(L_w,T_{w_0})=0,\text{ for all } i< \mathbf{a}(w_0w).
\end{displaymath}

Note that, for any projective functor $\theta$, all simple subquotients $L_x$
of the module $\theta L_w$ satisfy $\mathbf{a}(w_0x)\geq \mathbf{a}(w_0w)$. Therefore,
for the adjoint $\theta'$ of $\theta$, the previous
paragraph implies that
\begin{displaymath}
\mathrm{Ext}^i_{\mathcal{O}}(L_w,\theta T_{w_0})=
\mathrm{Ext}^i_{\mathcal{O}}(\theta' L_w, T_{w_0})=0,
\text{ for all } i< \mathbf{a}(w_0w).
\end{displaymath}

To sum up, for any tilting module $T$, we have
\begin{displaymath}
\mathrm{Ext}^i_{\mathcal{O}}(L_w,T)=0,
\text{ for all } i< \mathbf{a}(w_0w).
\end{displaymath}
Applying the equivalence $\mathcal{L}\top_{w_0}$ and noting that it sends
tilting modules to injective, we obtain the claim of the proposition.
\end{proof}

Now we prove a result ``in the opposite direction''.
Let $I$ be an injective cogenerator of $\mathcal{O}_0$.

\begin{proposition}\label{prop27}
For $w\in W$, we have  $\mathcal{L}_i\top_{w_0}L_w=0$, for all $i>\ell(w_0w)$.
\end{proposition}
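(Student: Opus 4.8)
The plan is to argue ``dually'' to Proposition~\ref{prop24}: instead of controlling the injective coresolution of $T_{w_0}=L_{w_0}$, we control the \emph{projective resolution} of a tilting module and then transport the vanishing through the equivalence $\mathcal L\top_{w_0}$. First I would recall that $\mathcal L\top_{w_0}$ is a self-equivalence of $\mathcal D^b(\mathcal O_0)$ sending tilting modules to injective modules, so $\mathcal L_i\top_{w_0}L_w$ computes the homology of $\mathcal L\top_{w_0}$ applied to $L_w$; to bound where this homology lives, it suffices to realize $L_w$ inside a bounded complex of tilting modules concentrated in a suitable range of homological degrees and read off the bound after applying the equivalence. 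Concretely, $\mathcal L_i\top_{w_0}L_w\neq 0$ forces $\mathrm{Ext}^i_{\mathcal O}(L_w,T)\neq 0$ for some indecomposable tilting $T$ (via Serre duality with respect to the Ringel-dual picture), so the task reduces to showing $\mathrm{Ext}^i_{\mathcal O}(L_w,T)=0$ for all tilting $T$ once $i>\ell(w_0w)$.

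The key input is Koszulity again, now in the ``linear tilting complex'' guise used in the proof of Theorem~\ref{thm1}: the minimal tilting coresolution $\mathcal T_\bullet(P_e)$ of the dominant projective is a linear complex, and by the Ringel--Koszul self-duality the multiplicity of $T_w\langle i\rangle$ in $\mathcal T_i(P_e)$ is a Kazhdan--Lusztig number that vanishes unless $\mathbf a(w)\le i\le\ell(w)$. Dualizing, for every tilting module $T_w$ the minimal \emph{projective} resolution $\mathcal P_\bullet(T_w)$ is likewise linear and $P_x$ appears in homological position $-i$ only for $i\le\ell(x)$ (indeed $\mathbf r(T_w)=\mathbf a(w_0w)$ and the symmetric bound on the projective side follows from Corollary~\ref{cor2}(i) together with Koszul linearity). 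From this one gets $\mathrm{Ext}^i_{\mathcal O}(L_w, T_{w_0})$ — and then, using that projective functors applied to $L_w$ have composition factors $L_x$ with $\ell(x)\le\ell(w)$ and adjunction exactly as in Proposition~\ref{prop24}, $\mathrm{Ext}^i_{\mathcal O}(L_w,T)$ for arbitrary tilting $T$ — vanishes for $i>\ell(w)$; but $\ell(w)=\ell(w_0w_0w)$ and the bound we actually need after applying $\mathcal L\top_{w_0}$ is phrased with $\ell(w_0w)$, so I would be careful to track which side the $w_0$-twist lands on. Applying $\mathcal L\top_{w_0}$, which shifts a tilting module to the injective $I_{w_0\,\cdot}$ and is an equivalence, converts the Ext-vanishing against tiltings into the homology-vanishing $\mathcal L_i\top_{w_0}L_w=0$ for $i>\ell(w_0w)$.

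The main obstacle I anticipate is bookkeeping the several interlocking $w_0$-twists and inversions so that the final inequality reads $i>\ell(w_0w)$ rather than $i>\ell(w)$ or $i>\ell(w^{-1})$ (these coincide, but the intermediate steps pass through $\theta_w$, the Ringel dual $w\mapsto w_0w^{-1}w_0$, and the $\top_{w_0}$-shift $x\mapsto w_0x$, and only the composite is forced to give the stated bound). A clean way to avoid sign errors is to first establish the self-contained statement ``$\mathrm{Ext}^i_{\mathcal O}(L_w,T)=0$ for all tilting $T$ and all $i>\ell(w)$'', mirroring Proposition~\ref{prop24} line by line with ``$\mathbf a(w_0w)$, injective coresolution of $L_{w_0}$, $\mathcal L_i\ge$'' replaced by ``$\ell(w)$, projective resolution of $L_{w_0}$ using that $T_{w_0}=L_{w_0}$ has a \emph{finite linear projective resolution} of length $\ell(w_0)$, $\mathcal L_i\le$'', and only at the very end invoke that $\mathcal L\top_{w_0}$ sends $T$ to $I_{w_0\,\cdot}$ so the homological index of $L_w$ after twisting is bounded by $\ell(w_0w)$. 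The exactness/adjunction facts about projective functors and the fact that $\mathcal L\top_{w_0}$ is a Serre-type equivalence sending tiltings to injectives are all quoted verbatim from Subsection~\ref{s7.1}, so no new homological-algebra machinery is required beyond Koszul linearity of the relevant resolutions.
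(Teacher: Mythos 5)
Your opening reduction is the same as the paper's: apply the (adjoint of the) equivalence $\mathcal{L}\top_{w_0}$, which exchanges tilting and injective modules, to reduce the claim to $\mathrm{Ext}^i_{\mathcal{O}}(L_w,T)=0$ for all tilting $T$ and all $i>\ell(w_0w)$. But the way you then propose to prove this Ext-vanishing has a genuine gap. First, the ``self-contained statement'' you want to establish, namely $\mathrm{Ext}^i_{\mathcal{O}}(L_w,T)=0$ for all tilting $T$ whenever $i>\ell(w)$, is false: already for $\mathfrak{sl}_2$ and $w=e$ one has $\mathrm{Ext}^1_{\mathcal{O}}(L_e,T_{w_0})=\mathrm{Ext}^1_{\mathcal{O}}(L_e,L_{s})\neq 0$ while $\ell(e)=0$. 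The correct threshold is $\ell(w_0w)=\ell(w_0)-\ell(w)$, not $\ell(w)$, so the discrepancy you flag as ``bookkeeping'' is not a sign issue but a wrong statement. Second, and more seriously, the transfer from $T_{w_0}$ to an arbitrary tilting $T\cong\theta T_{w_0}$ by adjunction and composition factors, which works in Proposition~\ref{prop24} because every composition factor $L_x$ of $\theta' L_w$ satisfies $\mathbf{a}(w_0x)\geq\mathbf{a}(w_0w)$, has no analogue here. Your claim that the composition factors $L_x$ of $\theta' L_w$ satisfy $\ell(x)\leq\ell(w)$ is false (e.g.\ $\theta_s L_s$ in type $A_2$ contains $L_{ts}$), and in any case the inequality actually needed to preserve the upper threshold is $\ell(x)\geq\ell(w)$, which also fails: in $\mathfrak{sl}_2$, $\theta_s L_s\cong P_s$ contains $L_e$, and $\mathrm{Ext}^1_{\mathcal{O}}(L_e,L_{w_0})\neq 0$ with $1>\ell(w_0s)=0$. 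So a composition-series argument can only give the weaker bound $i>\max_x\ell(w_0x)$ over the factors $L_x$ of $\theta' L_w$, which in general strictly exceeds $\ell(w_0w)$; the true vanishing for $\theta T_{w_0}$ comes from cancellations invisible to composition factors.

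This is exactly why the paper abandons the Proposition~\ref{prop24}-style transfer for this direction and argues uniformly in $T$: it represents $L_w$ by its minimal (linear) complex $\mathcal{T}_\bullet(L_w)$ of tilting modules, uses the combined Ringel and Koszul self-dualities to identify the leftmost nonzero position of this complex with the maximal degree of a nonzero graded component of $T_{w_0w\inv w_0}$, which equals $\ell(w_0w)$, and then concludes $\mathrm{Hom}_{\mathcal{D}^b(\mathcal{O})}(L_w,T[i])=0$ for $i>\ell(w_0w)$ from Happel's lemma, since maps from a complex of tiltings to a tilting module are computed in the homotopy category. If you want to salvage your approach, you should replace the per-tilting transfer by this single bound on the support of the tilting complex of $L_w$ (equivalently, $\mathbf{l}(L_w)=\ell(w_0w)$); your idea of controlling a linear projective resolution of $T_{w_0}$ only handles the case $T=T_{w_0}$ and does not propagate to all tilting modules.
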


\begin{proof}
We want to prove that
$\mathrm{Hom}_{\mathcal{D}^b(\mathcal{O})}(\mathcal{L}\top_{w_0}L_w,I[i])=0$, for all $i>\ell(w_0w)$.
Applying the adjoint of the equivalence $\mathcal{L}\top_{w_0}$, we get an equivalent 
statement that $\mathrm{Hom}_{\mathcal{D}^b(\mathcal{O})}(L_w,T[i])=0$, for all $i>\ell(w_0w)$,
where $T$ is the characteristic tilting module in $\cO_0$.

Consider the linear complex $\mathcal{T}_\bullet(L_w)$ of tilting modules which represents
$L_w$. By \cite{Ma}, it is a tilting object in the category of linear complexes of tilting modules.
Combining the Ringel and Koszul self-dualities of $\mathcal{O}_0$, we obtain that the absolute
value of the minimal non-zero component of $\mathcal{T}_\bullet(L_w)$ equals the maximal
degree of a non-zero component of $T_{w_0w\inv w_0}$. The latter is equal to $\ell(w_0w)$. Now the
necessary claim follows from \cite[Chapter III(2), Lemma 2.1]{Ha}.
\end{proof}

\begin{proposition}\label{prop25}
For $w\in W$, we have  $[\mathcal{L}_i\top_{w_0} I:L_w]\neq 0$ 
only if $i\leq \mathbf{a}(w_0w)$.
\end{proposition}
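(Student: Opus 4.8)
The plan is to dualize the situation and reduce the claim about $\mathcal L_i\top_{w_0}I$ to a statement about $\Ext$-vanishing between simple modules and tilting modules, which we can then read off from the tilting coresolution of $P_e$ controlled by Theorem~\ref{thm1} and Koszulity. Concretely, since $\mathcal L\top_{w_0}$ is a self-equivalence of $\mathcal D^b(\mathcal O_0)$ sending tilting modules to injective modules (by Subsection~\ref{s7.1}), we have for each $x\in W$ that $\mathcal L\top_{w_0}T_x\cong I_{w_0x}$, and hence, writing $T=\bigoplus_x T_x$ for the characteristic tilting module, $\mathcal L\top_{w_0}T\cong I$ is an injective cogenerator. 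Applying the (inverse) equivalence, $[\mathcal L_i\top_{w_0}I:L_w]\neq 0$ is equivalent to $[\mathcal L_i\top_{w_0}(\mathcal L\top_{w_0}T):L_w]\neq 0$, i.e.\ to the $L_w$-multiplicity in the $i$-th cohomology of $(\mathcal L\top_{w_0})^2T$. Since $(\mathcal L\top_{w_0})^2$ is the Serre functor on $\mathcal D^b(\mathcal O_0)$, by the standard adjunction (as in Lemma~\ref{lem21}) this multiplicity equals $\dim\Ext^i_{\mathcal O}(T, P_w)$, or dually $\dim\Ext^i_{\mathcal O}(\nabla\text{-something}, \cdot)$; in any case it is controlled by a projective resolution of $T$, equivalently by the tilting coresolution of $P_w$ via Ringel duality.

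The key computational input is then exactly Corollary~\ref{cor2}(\ref{cor2.1}): a minimal projective resolution $\mathcal P_\bullet(T)$ of the characteristic tilting module satisfies $\mathbf r(\mathcal P_{-i}(T))\leq i$, and more precisely, combining the proof of Theorem~\ref{thm1} with Koszulity, the indecomposable projective $P_w$ can occur as a summand of $\mathcal P_{-i}(T)$ only if $i$ is bounded in terms of $\mathbf a(w_0w)$ — one wants the bound $\Ext^i_{\mathcal O}(T,P_w)\neq 0\Rightarrow i\leq \mathbf a(w_0 w)$. The cleanest route: the tilting coresolution $\mathcal T_\bullet(P_{w_0 w})$ is a linear complex (after applying the appropriate projective functor to the linear $\mathcal T_\bullet(P_e)$ only when $w_0w$ is suitable; in general use that $\mathbf r(\mathcal T_\bullet(L_{w_0w}))=\ell(w_0w)$ and $\mathbf a\le\ell$), and translate the $\Ext$-vanishing between $P_{w_0w}$ and $T$ through Ringel self-duality into the statement that $T_{w_0x}$ with $x$ of high $\mathbf a$-value appears in $\mathcal T_\bullet(P_e)$ only in late homological degrees, which is the content of Theorem~\ref{thm1}. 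Feeding $\mathrm{proj.dim}(T_w)=\mathbf a(w)$ from \cite{Ma3,Ma4} into the length/degree count yields $i\le\mathbf a(w_0w)$.

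The main obstacle I anticipate is bookkeeping the various dualities consistently: twisting $\top_{w_0}$, Ringel self-duality, Koszul self-duality, and the simple-preserving duality each permute the labelling set $W$ (by $w\mapsto w_0w$, $w\mapsto w_0w^{-1}w_0$, etc.), and the $\mathbf a$-function is invariant under $w\mapsto w^{-1}$ and under left/right multiplication by $w_0$ in the combinations that occur, but one must verify at each step that the composite really lands on $\mathbf a(w_0w)$ and not some twist of it. A secondary technical point is that applying projective functors to $\mathcal T_\bullet(P_e)$ need not preserve linearity (as already noted in the proof of Theorem~\ref{thm1}), so the upper bound on homological degree must be argued via the length of the (not-necessarily-minimal) resolution together with the fact that projective functors are exact and do not create summands $T_x$ with small $\mathbf a(x)$ — this is precisely the kind of estimate used in the proof of Proposition~\ref{prop24}, and I would model the argument on that. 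Once the $\Ext$-bound $\Ext^i_{\mathcal O}(L_w,T)=0$ for $i>\mathbf a(w_0w)$-type statement is in place (dual to Proposition~\ref{prop24}'s lower bound, or directly from Proposition~\ref{prop27} reversed), applying $\mathcal L\top_{w_0}$ and identifying its effect on simples and tiltings finishes the proof.
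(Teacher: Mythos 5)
Your first paragraph is correct and is essentially the paper's own reduction: since $\top_{w_0}$ sends tilting modules to injective modules and is acyclic on them, $I\cong\mathcal{L}\top_{w_0}T$, so $[\mathcal{L}_i\top_{w_0}I:L_w]$ is the $L_w$-multiplicity in the $i$-th homology of the Serre functor $(\mathcal{L}\top_{w_0})^2$ applied to $T$, which by Lemma~\ref{lem21} equals $\dim\Ext^i_{\mathcal{O}}(T,P_w)$. (The paper first passes to $I_e=\nabla_e$ by projective functors and lands on $\Hom_{\mathcal{D}^b(\mathcal{O})}(T_{w_0},P_w[i])$; skipping that reduction is harmless.) The gap is in the second half: you correctly name the target, namely $\Ext^i_{\mathcal{O}}(T,P_w)=0$ for $i>\mathbf{a}(w_0w)$, but none of the routes you sketch delivers it. Corollary~\ref{cor2}\eqref{cor2.1} bounds $\mathbf{r}$ of the terms of a projective resolution of $T$, which is not the statement needed here; applying $\theta_w$ to the linear complex $\mathcal{T}_\bullet(P_e)$ destroys linearity and, as in Theorem~\ref{thm1}, only bounds homological positions from \emph{below} by $\mathbf{a}$-values, not from above; the estimate ``$\mathbf{r}(\mathcal{T}_\bullet(L_{w_0w}))=\ell(w_0w)$ and $\mathbf{a}\le\ell$'' gives only the weaker bound $i\le\ell(w_0w)$; and your closing claim ``$\Ext^i_{\mathcal{O}}(L_w,T)=0$ for $i>\mathbf{a}(w_0w)$'' is false in general (the correct threshold there is $\ell(w_0w)$, cf.\ Proposition~\ref{prop27}; between $\mathbf{a}(w_0w)$ and $\ell(w_0w)$ these spaces need not vanish), and in any case it is not the statement your reduction requires.

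What is missing is the single fact the paper quotes: $\mathbf{r}(P_w)=\mathbf{a}(w_0w)$, i.e.\ the minimal tilting coresolution of $P_w$ has length $\mathbf{a}(w_0w)$. This is the Ringel dual, under the Ringel self-duality of $\mathcal{O}_0$, of $\pd T_x=\mathbf{a}(x)$ from \cite{Ma3,Ma4}; it is not a consequence of Theorem~\ref{thm1} or of Koszul linearity alone. Granting it, the conclusion is immediate: represent $P_w$ by $\mathcal{T}_\bullet(P_w)$, use ext-orthogonality of tilting modules so that $\Hom_{\mathcal{D}^b(\mathcal{O})}(T,P_w[i])$ can be computed in the homotopy category (\cite[Chapter~III(2), Lemma~2.1]{Ha}, exactly as in Proposition~\ref{prop27}), and observe that it vanishes for $i>\mathbf{r}(P_w)=\mathbf{a}(w_0w)$. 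With that replacement your argument closes and agrees with the paper's proof.
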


\begin{proof}
Applying projective functors, the statement reduces to the special case
when $I$ is substituted by $I_e=\nabla_e$.  Note that
$[\mathcal{L}_i\top_{w_0} \nabla_e:L_w]$ equals the dimension
of $\mathrm{Hom}_{\mathcal{D}^b(\mathcal{O})}(\mathcal{L}\top_{w_0} \nabla_e,I_w[i])$.

Now, we write $\nabla_e= \mathcal{L}\top_{w_0} T_{w_0}$. Moving 
$(\mathcal{L}\top_{w_0})^2$ from the first argument to the second using
adjunction, we arrive to the space
$\mathrm{Hom}_{\mathcal{D}^b(\mathcal{O})}(T_{w_0},P_w[i])$.
Now the necessary claim follows from the observation that
$\mathbf{r}(P_w)=\mathbf{a}(w_0w)$, which is the Ringel dual of the
main results of \cite{Ma3,Ma4}.
\end{proof}

\subsection{$\mathfrak{sl}_3$-example}\label{s7.4}

In the case of $\mathfrak{sl}_3$, we have $W=\{e,s,t,st,ts,w_0\}$.
In Figure~\ref{fig1}, we give an explicit $\mathbb{Z}$-graded description
of composition factors of the tilting resolution
\begin{displaymath}
T_{w_0}\hookrightarrow T_{st}\oplus T_{ts}\to T_s\oplus T_t\to T_e 
\end{displaymath}
of $\nabla_e$ and its image after applying $\mathcal{L}\top_{w_0}$.
The original resolution is in {\color{magenta}magenta} and black
with $\nabla_e$ being the {\color{magenta}magenta} part.
The simple subquotients added during the application of 
$\mathcal{L}\top_{w_0}$ are {\color{blue}blue}.
The resulting cohomology in negative positions is \fbox{boxed}.
The module $L_w$ is denoted by $w$. The values of the $\mathbf{a}$-function
are as follows: $\mathbf{a}(e)=0$,
$\mathbf{a}(s)=\mathbf{a}(t)=\mathbf{a}(st)=\mathbf{a}(ts)=1$,
$\mathbf{a}(w_0)=3$.

\begin{figure}\label{fig1}
\resizebox{\textwidth}{!}{$$
\xymatrix@C=0.1em@R=0.1em{
&&&&&&&&&&&&&&&&&&&&&&&&&&&&&&{\color{magenta}w_0}&&\\
&&&&&&&&&&&&&&&&&&&&&&&&&&&&&{\color{magenta}st}&&{\color{magenta}ts}&\\
&&&&&&&&&&&&&&&&&&w_0&&&&&&w_0&&&&w_0&{\color{magenta}s}&&{\color{magenta}t}&w_0\\
&&&&&&&&&&&&&&&&&st&&ts&&&&st&&ts&&&st&ts&{\color{magenta}e}&st&ts\\
&&&\hookrightarrow&&w_0&&&&\oplus&&w_0&&&&\to&
\fbox{${\color{blue}t}$}&w_0&s&w_0&&\oplus&
\fbox{${\color{blue}s}$}&w_0&t&w_0&&\to&w_0&s&&t&w_0\\
&&&&&st&&{\color{blue}ts}&&&&ts&&{\color{blue}st}
&&&\fbox{${\color{blue}st}$}&ts&\fbox{${\color{blue}e}$}&st&{\color{blue}ts}
&&\fbox{${\color{blue}ts}$}&ts&\fbox{${\color{blue}e}$}&st&{\color{blue}st}&&&st&&ts&\\
&w_0&&&&w_0&{\color{blue}s}&&{\color{blue}t}&&&
w_0&{\color{blue}t}&&{\color{blue}s}&&&
{\color{blue}s}&w_0&{\color{blue}t}&&&&
{\color{blue}t}&w_0&{\color{blue}s}&&&&&w_0&&\\
{\color{blue}st}&&{\color{blue}ts}&&
{\color{blue}st}&&{\color{blue}ts}&\fbox{${\color{blue}e}$}&&&
{\color{blue}st}&&{\color{blue}ts}&\fbox{${\color{blue}e}$}&&&&&
{\color{blue}st}&&&&&&{\color{blue}ts}&&&&&&&&\\
{\color{blue}s}&&{\color{blue}t}&&&{\color{blue}s}&&&&&&{\color{blue}t}&&&&&&&&&&&&&&&&&&&&&\\
&\fbox{${\color{blue}e}$}&&&&&&&&&&&&&&&&&&&&&&&&&&&&&&&\\
}
$$}
\caption{$\mathcal{L}\top_{w_0}\nabla_e$ and its cohomology for $\mathfrak{sl}_3$}
\end{figure}
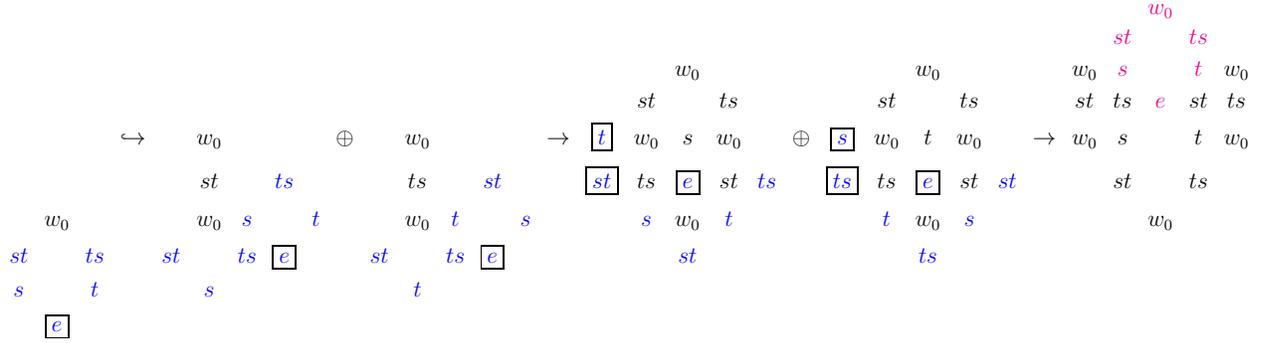

\section{Regularity phenomena with respect to twisted projective 
and tilting modules}\label{s8}

\subsection{Twisted projective modules}\label{s8.1}

Let $P$ be a projective generator of $\mathcal{O}_0$.
For $w\in W$, the module $\top_w P$ is a (generalized) tilting
module in $\mathcal{O}_0$ because $\top_w$ is a derived 
self-equivalence which is acyclic on modules with Verma flag.
A question is, for which
$w$ is the category $\mathcal{O}_0$ $\top_w P$-regular.
Below we show that the answer is non-trivial.

\subsection{Regularity with respect to twisted projectives}\label{s8.2}

\begin{theorem}\label{thm8.n1}
If $w=w_0^\mathfrak{p}$, for some parabolic subalgebra $\mathfrak{p}$
in $\mathfrak{g}$,
then $\mathcal{O}_0$ is $\top_w P$-regular.
\end{theorem}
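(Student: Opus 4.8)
The goal is to produce a coresolution $0 \to A \to Q_0 \to Q_1 \to \dots \to Q_k \to 0$ with each $Q_i \in \mathrm{add}(\top_{w_0^{\mathfrak p}} P)$ and $\mathrm{proj.dim}(Q_i) \le i$. The guiding principle is the same as in the proof of Theorem~\ref{thm1}: find a distinguished module whose $\top_{w_0^{\mathfrak p}}P$-coresolution is ``linear'' in a suitable sense, control the homological positions of its summands by Kazhdan--Lusztig combinatorics, match these with the known projective dimensions of the twisted projectives, and then propagate to all of $A = \bigoplus_x P_x$ by applying projective functors $\theta_x$ (which are exact, preserve projectives, commute with $\top_w$, and do not increase projective dimension).

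First I would identify the twisted projectives explicitly. Since $\top_w$ commutes with projective functors and $\top_w P_e = \Delta_e$ translated appropriately, one has $\top_{w_0^{\mathfrak p}} P_x = \theta_x \top_{w_0^{\mathfrak p}} \Delta_e$, and $\top_{w_0^{\mathfrak p}}\Delta_e$ is a translated Verma module; in fact $\top_{w_0^{\mathfrak p}} P$ is (up to multiplicities) the direct sum of the modules $\theta_x \top_{w_0^{\mathfrak p}}\Delta_e$. The key structural input is that twisted projective modules coincide with translated Verma modules, as remarked in Subsection~\ref{s1.7}; in particular $\top_{w_0^{\mathfrak p}}\Delta_e$ is, up to a grading shift, the standard module $\Delta^{\mathfrak p}_e$ for the parabolic block $\mathcal O^{\mathfrak p}_0$ induced up along the inclusion $\mathcal O^{\mathfrak p}_0 \hookrightarrow \mathcal O_0$, or equivalently the image of a parabolic projective under the relevant exact functor. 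The cleanest route is to relate $\top_{w_0^{\mathfrak p}}P$ to the characteristic tilting module of $\mathcal O^{\mathfrak p}_0$ (or its Ringel dual) via the Koszul and Ringel dualities already used in Theorems~\ref{thm7} and~\ref{thm9}, so that the desired coresolution of $A$ becomes the image, under a grading-preserving exact functor, of a tilting coresolution already constructed there.

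Concretely, the steps I would carry out are: (1) write down $\mathcal T_\bullet(P_e)$ — or rather a $\top_{w_0^{\mathfrak p}}P$-coresolution of $P_e$ — using that $\top_{w_0^{\mathfrak p}}$ applied to the tilting coresolution of $\Delta_e$ in the parabolic block $\mathcal O^{\mathfrak p}_0$ lifts to a coresolution of $\Delta_e$ in $\mathcal O_0$ by modules in $\mathrm{add}(\top_{w_0^{\mathfrak p}}P)$; (2) show this coresolution is linear with respect to the Koszul grading, so that the homological position of each summand is pinned down by a composition multiplicity computed via Kazhdan--Lusztig polynomials, yielding a bound $i \ge \mathbf a(\text{something})$ for the appearance of each summand; (3) combine with the formula $\mathrm{proj.dim}(\top_{w_0^{\mathfrak p}}P_x) = \mathbf a(w_0^{\mathfrak p} x) - \mathbf a(w_0^{\mathfrak p})$ or the analogous expression — which follows from \cite{Ma3,Ma4} together with the fact that $\top_{w_0^{\mathfrak p}}$ is a shifted translation on Vermas and the projective dimension computations in \cite[Table~2]{CM} — to verify $\mathrm{proj.dim}(Q_i) \le i$; (4) apply $\theta_x$ and conclude for all of $A$ exactly as in the last paragraph of the proof of Theorem~\ref{thm1}.

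The main obstacle, I expect, is step~(2)–(3): establishing that the relevant coresolution is genuinely linear and, simultaneously, obtaining the precise formula for $\mathrm{proj.dim}(\top_{w_0^{\mathfrak p}}P_x)$ in terms of the $\mathbf a$-function so that the two estimates fit together with the right inequality. Unlike the characteristic tilting case, where $\mathrm{proj.dim}(T_w) = \mathbf a(w)$ is quoted directly, here one must track how $\top_{w_0^{\mathfrak p}}$ and the translation functors interact with projective dimension, and the grading shift $\langle \ell(w_0^{\mathfrak p})\rangle$ (respectively the term $\mathbf a(w_0^{\mathfrak p})$) must be accounted for consistently on both sides. Once the bookkeeping of shifts is done correctly — most transparently by transporting the entire picture through the Koszul--Ringel duality to the parabolic block, where Theorem~\ref{thm7} already supplies the linear tilting coresolution and \cite[Table~2]{CM} supplies the projective dimensions — the regularity inequality should drop out, and the extension to arbitrary $P_x$ via projective functors is routine.
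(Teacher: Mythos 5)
Your reduction to the dominant projective $P_e=\Delta_e$ via projective functors (which commute with twisting functors) is exactly the paper's first step, but the core construction you propose does not work, and it rests on a false identification. One has $\top_{w_0^{\mathfrak{p}}}\Delta_e\cong\Delta_{w_0^{\mathfrak{p}}}$, an ordinary (twisted) Verma module, so $\top_{w_0^{\mathfrak{p}}}P_y\cong\theta_y\Delta_{w_0^{\mathfrak{p}}}$ is a translated Verma; it is \emph{not}, up to grading shift, the parabolic standard module $\Delta^{\mathfrak{p}}_e$ of $\mathcal{O}^{\mathfrak{p}}_0$ (already for $\mathfrak{sl}_2$ with $\mathfrak{p}=\mathfrak{g}$ one gets $\top_s\Delta_e\cong\Delta_s=L_s$, while $\Delta^{\mathfrak{p}}_e=L_e$, and in fact $\top_sL_e=0$, so the whole parabolic block is killed by the twisting functor). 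Consequently there is no exact functor implementing your step (1): applying $\top_{w_0^{\mathfrak{p}}}$ to the tilting coresolution of the dominant projective in $\mathcal{O}^{\mathfrak{p}}_0$ from Theorem~\ref{thm7} neither lands in $\mathrm{add}(\top_{w_0^{\mathfrak{p}}}P)$ nor coresolves $\Delta_e$. The formula you invoke in step (3), $\mathrm{proj.dim}(\top_{w_0^{\mathfrak{p}}}P_x)=\mathbf{a}(w_0^{\mathfrak{p}}x)-\mathbf{a}(w_0^{\mathfrak{p}})$, is the formula for \emph{parabolic tilting} modules and is false for twisted projectives: $\top_sP_e\cong\Delta_s$ has projective dimension $\ell(s)=1$ while the formula gives $0$ (see also the tables in Subsection~\ref{s8.5}); indeed, determining $\mathrm{proj.dim}(\theta_y\Delta_x)$ in general is posed as an open problem in Section~\ref{s10}, so no proof of the theorem can rely on having such a closed formula.

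The missing idea is to pass to the \emph{Levi subalgebra} $\mathfrak{l}$ of $\mathfrak{p}$ (not to parabolic $\mathcal{O}^{\mathfrak{p}}$ or its Koszul/Ringel duals) and use parabolic induction. Writing $w=w_0^{\mathfrak{p}}$, the indecomposable tiltings in $\mathcal{O}_0(\mathfrak{l})$ satisfy $T^{\mathfrak{l}}_x\cong\top_wP^{\mathfrak{l}}_{wx}$ for $x\in W_{\mathfrak{p}}$, and parabolic induction from $\mathfrak{l}$ to $\mathfrak{g}$ is exact, commutes with $\top_w$, and sends $P^{\mathfrak{l}}_x$ to $P_x$; hence it sends the Auslander--Ringel regular tilting coresolution of $\Delta^{\mathfrak{l}}_e$ provided by Theorem~\ref{thm1} (applied to $\mathfrak{l}$) to a coresolution of $\Delta_e$ by modules $\top_wP_{wx}\in\mathrm{add}(\top_wP)$, with $\mathrm{proj.dim}(\top_wP_{wx})\leq\mathrm{proj.dim}(T^{\mathfrak{l}}_x)\leq i$ because induction preserves projectives and exactness. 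This sidesteps entirely the linearity and $\mathbf{a}$-function bookkeeping for twisted projectives that you correctly identify as the obstacle but do not resolve; your concluding step of applying $\theta_x$ to propagate the coresolution to all of $A$ is fine and agrees with the paper.
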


\begin{proof}
Let $w=\wi$ as above.
Since twisting functors functorially commute with projective functors,
we only need to show that $\Delta_e$ has a coresolution by modules
in $\mathrm{add}(\top_w P)$ satisfying the regularity condition.

By construction, twisting functors commute with parabolic induction.
For the category $\mathcal{O}$ associated to the Levi subalgebra 
$\mathfrak{l}$ of 
$\mathfrak{p}$, the claim of our Theorem coincides with the claim of 
Theorem~\ref{thm1}. The parabolic induction from $\mathfrak l$
to $\mathfrak{g}$ is exact and sends the indecomposable projective $P_x^{\mathfrak l}$ (for $x\in W_{\mathfrak p}$)
to the indecomposable projective $P_x$. 
It also sends (indecomposable) tiltings
to our twisted projective modules. To see this, 
write the indecomposable tilting module for $\mathfrak l$   corresponding to $x\in W_{\mathfrak p}$ as  $T^{\mathfrak l}_x \cong \top_wP^{\mathfrak l}_{wx}$ and use that the parabolic induction commutes with $\top_w$ to conclude that $T^{\mathfrak l}_x$ is sent to $\top_w P_{wx}$.
Therefore a tilting coresolution of
the dominant projective for $\mathfrak{l}$  is sent to a coresolution
of the dominant projective for $\mathfrak{g}$ by our twisted projective modules. The claim follows.
\end{proof}

\begin{corollary}\label{cor8.n2}
If $w=w_0^\mathfrak{p}$, for some parabolic subalgebra $\mathfrak{p}$
in $\mathfrak{g}$,
then all blocks of $\mathcal{O}$ are $\top_w P$-regular.
\end{corollary}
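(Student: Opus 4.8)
The plan is to deduce Corollary~\ref{cor8.n2} from Theorem~\ref{thm8.n1} by the same ``reduce to the integral regular block via Soergel, then translate to the wall'' argument used in the proofs of Corollary~\ref{cor6} and Corollary~\ref{cor10}. First I would invoke Soergel's combinatorial description of blocks of $\mathcal{O}$ from \cite{So} to reduce to integral blocks (possibly for a different Lie algebra), and then note that every regular integral block is equivalent to $\mathcal{O}_0$, for which the statement is exactly Theorem~\ref{thm8.n1}. So the remaining content is the passage from $\mathcal{O}_0$ to a singular integral block $\mathcal{O}_\mu$ via the translation functor $\tothewall\colon \mathcal{O}_0\to\mathcal{O}_\mu$ to the wall.

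The key point that makes this work is that twisting functors commute with projective (in particular, translation) functors, as recalled in Subsection~\ref{s7.1}, together with the observation that $\tothewall$ is exact, sends projectives to projectives and does not increase projective dimension. Concretely, given the regularity coresolution
\begin{displaymath}
0\to \mathcal{O}_0 \ni P \to Q_0\to Q_1\to\dots\to Q_k\to 0
\end{displaymath}
of a projective generator $P$ of $\mathcal{O}_0$ with each $Q_i\in\operatorname{add}(\top_w P)$ and $\operatorname{proj.dim}(Q_i)\le i$, I would apply $\tothewall$ termwise. Exactness of $\tothewall$ keeps this a coresolution; it sends $P$ to a projective generator of $\mathcal{O}_\mu$; and using $\tothewall\circ\top_w\cong\top_w\circ\tothewall$ one gets $\tothewall Q_i\in\operatorname{add}(\top_w(\tothewall P))$, which is the correct class of twisted projectives in $\mathcal{O}_\mu$. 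Finally, since $\tothewall$ sends projectives to projectives and is exact, $\operatorname{proj.dim}(\tothewall Q_i)\le \operatorname{proj.dim}(Q_i)\le i$, so the regularity condition is preserved.

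The one genuine subtlety — and the step I would be most careful about — is checking that $\operatorname{add}(\top_w(\tothewall P))$ really is the relevant class of (generalized) tilting modules defining $\top_w\!P$-regularity in $\mathcal{O}_\mu$, i.e.\ that $\top_w$ applied to a projective generator of $\mathcal{O}_\mu$ is still a generalized tilting module there. This follows because $\top_w$ restricts to a derived self-equivalence of $\mathcal{O}_\mu$ that is acyclic on modules with Verma flag, exactly as in Subsection~\ref{s8.1}, so the three defining properties (finite projective dimension, ext-self-orthogonality, $\mathcal{O}_\mu$ has a finite $\operatorname{add}(\top_w(\tothewall P))$-coresolution) transport along the equivalence. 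Modulo this bookkeeping the argument is \emph{mutatis mutandis} the proof of Corollary~\ref{cor6}, which is why a one-line ``mutatis mutandis'' proof is appropriate.

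\begin{proof}
Mutatis mutandis the proof of Corollary~\ref{cor6}, using that translation functors to and from walls are exact, send projectives to projectives, do not increase projective dimension, and commute with $\top_w$.
\end{proof}
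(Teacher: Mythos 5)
Your argument is exactly the paper's: the proof there is the one-line observation that twisting functors commute with projective (hence translation) functors, so translation to the walls — being exact, sending projectives to projectives and not increasing projective dimension — transports the regular coresolution of Theorem~\ref{thm8.n1} to any singular block, just as in Corollary~\ref{cor6}. Your fleshed-out version, including the check that $\top_w$ of a projective generator of the singular block is still a generalized tilting module, is correct and matches the intended reasoning.
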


\begin{proof}
Since twisting functors functorially commute with projective functors,
 we can use translations to walls to extend Theorem~\ref{thm8.n1} to singular blocks.
\end{proof}

\begin{remark}\label{ex8.n3}
{\em \em 
The module $\Delta_w$ admits a (linear) coresolution by tilting modules, 
which starts with $T_w$. Applying the inverse of $\mathcal{L}\top_w$ 
to this coresolution, we obtain a coresolution of $\Delta_e$ by
modules in $\mathrm{add}(\top_{w^{-1}w_0}P)$. We note that, 
by \cite{AS}, the inverse of $\mathcal{L}\top_w$ is
$\mathcal{R}(\star\circ\top_{w^{-1}}\circ\star)$, where $\star$ is the simple preserving duality, and the claim in the
previous sentence follows by using the acyclicity results in \cite{AS}.
Hence, a necessary condition for
$\mathcal{O}_0$ to be $\top_{xw_0}P$-regular is that the module
$(\mathcal{L}\top_w)^{-1}T_w$, which starts this coresolution,
is projective. In case the multiplicity
of $\Delta_{w_0}$ in a standard filtration of $T_w$ is greater than $1$,
the module $(\mathcal{L}\top_w)^{-1}T_w$ will have $\Delta_e$ appearing
with multiplicity $1$ (as $\Delta_w$ appears in $T_w$ with multiplicity $1$)
while some standard module will have higher multiplicity, by assumption.
Therefore, in this case, $(\mathcal{L}\top_w)^{-1}T_w$ is not a projective module.
This shows that the condition $[T_w:\Delta_{w_0}]=1$ is necessary
for $\mathcal{O}_0$ to be  $\top_{xw_0}P$-regular. This implies that
examples of $w\in W$ such that  $\mathcal{O}_0$ is not  $\top_{w}P$-regular
exist already in type $A_3$. We will see in Subsection \ref{s8.5} below that $\top_{w}P$-regularity can fail already in $\mathcal{O}_0$ of type  $A_2$.
}
\end{remark}

\subsection{Twisted tilting modules}\label{s8.3}

Let $T$ be a characteristic tilting module for $\mathcal{O}_0$.
For $w\in W$, the module $\top_w T$ is a (generalized) tilting
module in $\mathcal{O}_0$ because $\top_w$ is a derived 
self-equivalence which is acyclic on modules with Verma flag.

This raises an interesting problem, namely, to determine for which
$w$ the category $\mathcal{O}_0$ is $\top_w T$-regular.
We show below that the answer is non-trivial.

\subsection{Regularity with respect to twisted tiltings}\label{s8.4}

\begin{theorem}\label{thm8.n21}
If $w=w_0^\mathfrak{p}$, for some parabolic subalgebra $\mathfrak{p}$
in $\mathfrak{g}$,
then $\mathcal{O}_0$ is $\top_w T$-regular.
\end{theorem}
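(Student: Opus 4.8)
The plan is to mimic the proof of Theorem~\ref{thm8.n1}, replacing ``projective'' by ``tilting'' throughout and using the compatibility of twisting functors with parabolic induction. Write $w=\wi$ for a parabolic subgroup $W_{\mathfrak p}$ with Levi subalgebra $\mathfrak l\subseteq\mathfrak p$. Since $\top_w$ functorially commutes with projective functors, it suffices, exactly as before, to produce a coresolution of $\Delta_e$ by modules in $\mathrm{add}(\top_w T)$ satisfying the regularity condition $\mathrm{proj.dim}(Q_i)\leq i$; applying projective functors then transports this to a coresolution of every other $\Delta_x$, hence of every indecomposable projective, hence of ${}_AA$, without increasing projective dimensions.

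First I would recall that $\top_w$ commutes with parabolic induction $\mathrm{ind}_{\mathfrak l}^{\mathfrak g}$ from the category $\mathcal O$ for $\mathfrak l$. The key input is the analysis of the characteristic tilting module for $\mathfrak g$: by definition $T_{x}\cong\top_{w_0}P_{w_0 x}$ for the \emph{full} $w_0$, but here the relevant statement is that for the Levi we have $T^{\mathfrak l}_x\cong\top_{w}P^{\mathfrak l}_{wx}$ for $x\in W_{\mathfrak p}$ (with $w=w_0^{\mathfrak p}$ the longest element of $W_{\mathfrak p}$, which plays the role of the longest element \emph{inside} $\mathfrak l$). Applying $\mathrm{ind}_{\mathfrak l}^{\mathfrak g}$ and using commutativity with $\top_w$, together with the fact that $\mathrm{ind}_{\mathfrak l}^{\mathfrak g}P^{\mathfrak l}_{wx}\cong P_{wx}$, we see that $\mathrm{ind}_{\mathfrak l}^{\mathfrak g}T^{\mathfrak l}_x\cong\top_w P_{wx}$ --- that is, parabolic induction sends indecomposable tilting modules for $\mathfrak l$ to twisted projective modules for $\mathfrak g$. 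But $\top_w T$-regularity asks for twisted \emph{tilting} modules, not twisted projectives; so the content of Theorem~\ref{thm8.n21} beyond Theorem~\ref{thm8.n1} is that in $\mathrm{add}(\top_w T)$ we find a suitable coresolution of $\Delta_e$. The route is: Theorem~\ref{thm8.n1} already gives a regular coresolution of $\Delta_e=P_e$ by modules in $\mathrm{add}(\top_w P)$; but one checks, again via parabolic induction from $\mathfrak l$, that the characteristic tilting coresolution of $P_e$ constructed in the proof of Theorem~\ref{thm1} for the Levi --- when induced up --- is simultaneously a coresolution by $\top_w(\text{projectives})$ \emph{and} lands in $\mathrm{add}(\top_w T)$, because applying $\top_w$ to the tilting coresolution $\mathcal T_\bullet(P_{wx}^{\mathfrak l})$ relates the two families. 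More directly: start from the linear tilting coresolution $\mathcal T_\bullet(P^{\mathfrak l}_e)$ of the dominant projective for $\mathfrak l$ (this is the Theorem~\ref{thm1} datum for $\mathfrak l$), rewrite each $T^{\mathfrak l}_y$ appearing in it as $\top_w P^{\mathfrak l}_{wy}$, and note that this exhibits $\mathcal T_\bullet(P^{\mathfrak l}_e)$ as the complex $\top_w$ applied to the projective coresolution $\top_w^{-1}\mathcal T_\bullet(P^{\mathfrak l}_e)$; the homological degrees and projective dimensions are controlled by Theorem~\ref{thm1}/Theorem~\ref{thm8.n1} for $\mathfrak l$. Inducing up and invoking commutativity of $\mathrm{ind}$ with $\top_w$ and exactness/projective-preservation of $\mathrm{ind}$ gives the desired coresolution of $\Delta_e$ by modules in $\mathrm{add}(\top_w T)$, with $Q_i$ of projective dimension at most $i$.

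The main obstacle I expect is the bookkeeping needed to see that the \emph{same} induced complex simultaneously lies in $\mathrm{add}(\top_w T)$ and satisfies the projective-dimension bound --- i.e., matching the two descriptions ``$\mathrm{ind}$ of a tilting coresolution for $\mathfrak l$'' versus ``a $\top_w T$-coresolution for $\mathfrak g$'' and checking that $\top_w$ applied to the indecomposable $\mathfrak l$-tiltings produces precisely the summands of $\top_w T$ that one needs in each homological degree. One must also verify that $\mathrm{proj.dim}(\top_w T^{\mathfrak l}$-summands$)$, after induction, is bounded by the homological degree; this follows because $\top_w$ is a derived equivalence acyclic on Verma-flag modules, so $\top_w$ does not increase projective dimension on the relevant modules, and parabolic induction is exact and sends projectives to projectives, hence also does not increase projective dimension. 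With these observations in place the argument is a routine transport of Theorem~\ref{thm1} along $\mathrm{ind}_{\mathfrak l}^{\mathfrak g}$, exactly parallel to (and in fact subsuming the mechanism of) Theorem~\ref{thm8.n1}, and the corollary to singular blocks follows as before by translation to the walls, since $\top_w$ commutes with translation functors.
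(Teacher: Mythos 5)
Your reduction to coresolving $\Delta_e$ and the identification $\mathrm{ind}_{\mathfrak l}^{\mathfrak g}(T^{\mathfrak l}_x)\cong\top_wP_{wx}$ are fine, but the central step of the proposal is false: the complex you induce is exactly the $\mathrm{add}(\top_wP)$-coresolution from Theorem~\ref{thm8.n1}, and it does \emph{not} lie in $\mathrm{add}(\top_wT)$. Its terms are the twisted projectives $\top_wP_{wx}$ with $x\in W_{\mathfrak p}$, and since $\mathcal{L}\top_w$ is a derived self-equivalence which is acyclic on modules with Verma flags, $\top_wP_{y}$ can occur as a direct summand of some $\top_wT_z$ only if $P_{y}$ is a direct summand of $T_z$, i.e.\ only if $P_y$ is itself tilting (equivalently projective-injective); for $y\in W_{\mathfrak p}$ this forces $y=w_0$, which is impossible for a proper parabolic. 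Concretely, for $\mathfrak{sl}_3$ and $w=s$ your construction yields $0\to P_e\to\top_sP_s\to\top_sP_e\to 0$ with $\top_sP_s\cong P_s$ and $\top_sP_e\cong\Delta_s$, neither of which belongs to $\mathrm{add}(\top_sT)$; compare the actual $\mathrm{add}(\top_sT)$-coresolution of $P_e$ in Subsection~\ref{s8.5}, which has length five and entirely different terms. So the sentence beginning ``but one checks\dots'' is precisely where the argument breaks: what you produce re-proves Theorem~\ref{thm8.n1}, and at no point is a coresolution by modules in $\mathrm{add}(\top_wT)$ constructed.

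The missing idea is to change the input over the Levi. Because $w=w_0^{\mathfrak p}$ is the longest element of $W_{\mathfrak p}$, over $\mathfrak l$ the $w$-twists of tilting modules are the injective modules; accordingly the paper does not induce the tilting coresolution of $\Delta^{\mathfrak l}_e$ (the Theorem~\ref{thm1} datum, which is what you use) but an \emph{injective} coresolution of $\Delta^{\mathfrak l}_e$ with the regularity property, i.e.\ the Auslander regularity of $\mathcal{O}_0$ for $\mathfrak l$ from Theorem~\ref{thm3}, and then identifies the parabolically induced Levi injectives with modules in $\mathrm{add}(\top_wT)$. That identification is the essential content of Theorem~\ref{thm8.n21} beyond Theorem~\ref{thm8.n1}; it is of a different nature from the identification you invoke and cannot be obtained by formally rewriting $T^{\mathfrak l}_y$ as $\top_wP^{\mathfrak l}_{wy}$, since after induction that rewriting only ever produces twisted projectives. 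Your proposal contains no substitute for this step, so the proof does not go through.
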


\begin{proof}

As usual, we use that the projective functors commutes with twisting functors to reduce the claim to finding a desired coresolution for $P_e=\Delta_e$.

Let $\mathfrak l$ be the Levi subalgebra of $\mathfrak p$ and take a coresolution of $\Delta_e^{\mathfrak l}=P^{\mathfrak l}_e$ by injectives for $\mathfrak l$ with the regularity property, guaranteed by Theorem \ref{thm3}.
Then just like in the proof of Theorem~\ref{thm8.n1}, the parabolic induction produces a coresolution of $\Delta_e$ in $\operatorname{add}(\top_w T)$ with the regularity condition. 
In fact, the $w_0^\mathfrak{p}$-twists of tiltings are obtained by the parabolic induction from injective modules over 
$\mathfrak{l}$, which are the $\wi$-twists of tiltings over $\mathfrak l$. 
The proof is complete.
\end{proof}

\begin{corollary}\label{cor8.n22}
If $w=w_0^\mathfrak{p}$, for some parabolic subalgebra $\mathfrak{p}$
in $\mathfrak{g}$,
then all blocks of $\mathcal{O}$ are $\top_w T$-regular.
\end{corollary}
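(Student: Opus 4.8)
The plan is to deduce Corollary~\ref{cor8.n22} from Theorem~\ref{thm8.n21} by the exact same mechanism used to pass from Theorem~\ref{thm1} (or Theorem~\ref{thm3}) to Corollary~\ref{cor6}, namely by transporting a coresolution witnessing regularity in the principal block to singular blocks via translation functors. So first I would fix $w=\wi$ for a parabolic subalgebra $\mathfrak{p}$ and recall, from Theorem~\ref{thm8.n21}, a coresolution
\begin{displaymath}
0\to P\to Q_0\to Q_1\to\dots\to Q_k\to 0
\end{displaymath}
of a projective generator $P$ of $\mathcal{O}_0$ with each $Q_i\in\operatorname{add}(\top_w T)$ and $\operatorname{proj.dim}(Q_i)\leq i$.

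Next I would recall that, by Soergel's combinatorial description \cite{So}, every block of $\mathcal{O}$ is equivalent to an integral block of $\mathcal{O}$ for a possibly different semi-simple Lie algebra, and that every singular integral block $\mathcal{O}_\lambda$ is obtained from the corresponding regular block $\mathcal{O}_0$ by the translation-to-the-wall functor $\tothewall$, which is exact, sends projectives to projectives and tiltings to tiltings, and does not increase projective dimension. The one extra ingredient needed beyond the proof of Corollary~\ref{cor6} is that translation functors commute (up to isomorphism, possibly after a grading shift) with twisting functors; this is exactly the compatibility recorded in Subsection~\ref{s7.1} and already used in Corollary~\ref{cor8.n2}, and it gives $\tothewall\circ\top_w\cong\top_w\circ\tothewall$ as functors, hence $\tothewall(\top_w T)\cong\top_w(\tothewall T)$, and $\tothewall T$ is the characteristic tilting module of $\mathcal{O}_\lambda$. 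Therefore $\tothewall$ sends $\operatorname{add}(\top_w T)$ in $\mathcal{O}_0$ into $\operatorname{add}(\top_w T_\lambda)$ in $\mathcal{O}_\lambda$, where $T_\lambda$ is the characteristic tilting of the singular block.

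Then I would simply apply $\tothewall$ to the coresolution above. Exactness of $\tothewall$ keeps it a coresolution; it sends the projective generator $P$ to a projective module in $\mathcal{O}_\lambda$ (a projective generator after possibly adding summands, which does not affect the argument — or one starts directly from $P_e=\Delta_e$ as in the proof of Theorem~\ref{thm8.n21} and translates the coresolution of $\Delta_e$, then applies projective functors in $\mathcal{O}_\lambda$); each $\tothewall Q_i$ lies in $\operatorname{add}(\top_w T_\lambda)$ by the previous paragraph; and $\operatorname{proj.dim}(\tothewall Q_i)\leq\operatorname{proj.dim}(Q_i)\leq i$ because $\tothewall$ is exact and preserves projectives. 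This exhibits the required coresolution, so $\mathcal{O}_\lambda$ is $\top_w T_\lambda$-regular, proving the corollary.

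The only point requiring any care — and the one I would state explicitly rather than wave at — is the commutativity of $\tothewall$ with $\top_w$ at the level of coresolutions, including keeping track of any grading shift so that it is genuinely $\operatorname{add}$ of the twisted characteristic tilting of the singular block (and not some unrelated object) that appears. Since this commutativity is already invoked in Corollary~\ref{cor8.n2} for twisted projectives and the twisted-tilting case is formally identical, I expect no genuine obstacle; the proof reduces to ``mutatis mutandis the proof of Corollary~\ref{cor8.n2}'' together with Theorem~\ref{thm8.n21}.
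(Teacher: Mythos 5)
Your argument is correct and is essentially the paper's own proof: the paper likewise deduces the corollary from Theorem~\ref{thm8.n21} by translating to walls, with the commutation of twisting functors with projective (hence translation) functors as the key compatibility, just as you spell out. The extra details you give (exactness, preservation of tiltings, non-increase of projective dimension) are exactly what the paper's terse ``mutatis mutandis'' argument relies on.
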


\begin{proof}
Since twisting functors functorially commute with projective functors,
we can use translations to walls to extend the statement in 
Theorem \ref{thm8.n21} to singular blocks.
\end{proof}

We will see in Subsection \ref{s8.5} below that $\top_{w}T$-regularity can fail already in $\mathcal{O}_0$ of type $A_2$.

\subsection{$\mathfrak{sl}_3$-example}\label{s8.5}

For the Lie algebra $\mathfrak{sl}_3$, we have
$W=\{e,s,t,st,ts,w_0=sts=tst\}$. 

The left of the two tables below describes the projective dimensions 
of the twisted projective modules $\top_x P_y$. The right table
below describes the projective dimensions 
of the twisted tilting modules $\top_x T_y$. 
\begin{displaymath}
\begin{array}{c||c|c|c|c|c|c}
x\backslash y&e&s&t&st&ts&w_0\\ \hline\hline
e&0&0&0&0&0&0\\\hline
s&1&0&1&0&1&0\\\hline
t&1&1&0&1&0&0\\\hline
st&2&1&1&1&1&0\\\hline
ts&2&1&1&1&1&0\\\hline
w_0&3&1&1&1&1&0
\end{array}\qquad\qquad
\begin{array}{c||c|c|c|c|c|c}
x\backslash y&e&s&t&st&ts&w_0\\ \hline\hline
e&0&1&1&1&1&3\\\hline
s&0&2&1&2&1&4\\\hline
t&0&1&2&1&2&4\\\hline
st&0&2&2&2&2&5\\\hline
ts&0&2&2&2&2&5\\\hline
w_0&0&2&2&2&2&6
\end{array} 
\end{displaymath}

Here are the graded characters of the modules $\top_s P_x$
(with the characters of the tilting cores displayed in {\color{magenta}magenta}):

\resizebox{\textwidth}{!}{
$$
\xymatrix@C=0.1em@R=0.1em{
\mathrm{deg}\backslash {\color{blue}x}&&&
{\color{blue}e}&&&&{\color{blue}s}&&&&{\color{blue}t}&&&&
{\color{blue}st}&&&&&{\color{blue}ts}&&&&&&{\color{blue}w_0}&&&\\
-1&|&&&&|&&s&&|&&&|&&&st&&&|&&&&&|&&&{\color{magenta}w_0}&&&\\
0&|&&s&&|&st&e&ts&|&st&&|&&s&{\color{magenta}w_0}&t&&|&&{\color{magenta}w_0}&s&&|&&{\color{magenta}st}&&{\color{magenta}ts}&&\\
1&|&st&&ts&|&s&{\color{magenta}w_0}&t&|&s&{\color{magenta}w_0}&|&st&{\color{magenta}ts}&e&{\color{magenta}st}&ts&|&{\color{magenta}st}&e&{\color{magenta}ts}&st&|&{\color{magenta}w_0}&{\color{magenta}s}&&{\color{magenta}t}&{\color{magenta}w_0}&\\
2&|&&{\color{magenta}w_0}&&|&&{\color{magenta}st}&ts&|&st&{\color{magenta}ts}&|&&{\color{magenta}w_0}&{\color{magenta}s}&{\color{magenta}w_0}&t&|&{\color{magenta}w_0}&{\color{magenta}t}&{\color{magenta}w_0}&s&|&{\color{magenta}st}&{\color{magenta}ts}&{\color{magenta}e}&{\color{magenta}st}&{\color{magenta}ts}&\\
3&|&&&&|&&{\color{magenta}w_0}&&|&&{\color{magenta}w_0}&|&&{\color{magenta}ts}&&{\color{magenta}st}&&|&{\color{magenta}st}&&{\color{magenta}ts}&&|&{\color{magenta}w_0}&{\color{magenta}s}&&{\color{magenta}t}&{\color{magenta}w_0}&\\
4&|&&&&|&&&&|&&&|&&&{\color{magenta}w_0}&&&|&&{\color{magenta}w_0}&&&|&&{\color{magenta}st}&&{\color{magenta}ts}&&\\
5&|&&&&|&&&&|&&&|&&&&&&|&&&&&|&&&{\color{magenta}w_0}&&&
}
$$
}

Here are the graded characters of the modules $\top_{ts} P_x$
(with the characters of the tilting cores displayed in {\color{magenta}magenta}):

\resizebox{\textwidth}{!}{
$$
\xymatrix@C=0.1em@R=0.1em{
\mathrm{deg}\backslash {\color{blue}x}&&
{\color{blue}e}&&&{\color{blue}s}&&{\color{blue}t}&&&
{\color{blue}st}&&&&&{\color{blue}ts}&&&&&{\color{blue}w_0}&&\\
-2&|&&|&&&|&&|&&&&&|&&&&|&&&{\color{magenta}w_0}&&&\\
-1&|&&|&ts&&|&&|&&{\color{magenta}w_0}&t&&|&&{\color{magenta}w_0}&&|&&{\color{magenta}st}&&{\color{magenta}ts}&&\\
0&|&ts&|&t&{\color{magenta}w_0}&|&{\color{magenta}w_0}&|&{\color{magenta}ts}&e&{\color{magenta}st}&ts&|&{\color{magenta}st}&&{\color{magenta}ts}&|&{\color{magenta}w_0}&{\color{magenta}s}&&{\color{magenta}t}&{\color{magenta}w_0}&\\
1&|&{\color{magenta}w_0}&|&ts&{\color{magenta}st}&|&{\color{magenta}ts}&|&{\color{magenta}w_0}&{\color{magenta}s}&{\color{magenta}w_0}&t&|&{\color{magenta}w_0}&{\color{magenta}t}&{\color{magenta}w_0}&|&{\color{magenta}st}&{\color{magenta}ts}&{\color{magenta}e}&{\color{magenta}st}&{\color{magenta}ts}&\\
2&|&&|&&{\color{magenta}w_0}&|&{\color{magenta}w_0}&|&{\color{magenta}st}&&{\color{magenta}ts}&&|&{\color{magenta}st}&&{\color{magenta}ts}&|&{\color{magenta}w_0}&{\color{magenta}s}&&{\color{magenta}t}&{\color{magenta}w_0}&\\
3&|&&|&&&|&&|&&{\color{magenta}w_0}&&&|&&{\color{magenta}w_0}&&|&&{\color{magenta}st}&&{\color{magenta}ts}&&\\
4&|&&|&&&|&&|&&&&&|&&&&|&&&{\color{magenta}w_0}&&&
}
$$
}

Here are the graded characters of the modules $\top_s T_x$
(with the characters of the tilting cores displayed in {\color{magenta}magenta}):

\resizebox{\textwidth}{!}{
$$
\xymatrix@C=0.1em@R=0.1em{
\mathrm{deg}\backslash {\color{blue}x}&&
{\color{blue}w_0}&&{\color{blue}st}&&&{\color{blue}ts}&&&
{\color{blue}s}&&&&&{\color{blue}t}&&&&&{\color{blue}e}&&&\\
-4&|&&|&&&|&&|&&&&&|&&&&|&&&{\color{magenta}w_0}&&&\\
-3&|&&|&&&|&&|&&{\color{magenta}w_0}&&&|&&{\color{magenta}w_0}&&|&&{\color{magenta}st}&&{\color{magenta}ts}&&\\
-2&|&&|&{\color{magenta}w_0}&&|&{\color{magenta}w_0}&|&{\color{magenta}st}&&{\color{magenta}ts}&&|&{\color{magenta}st}&&{\color{magenta}ts}&|&{\color{magenta}w_0}&{\color{magenta}s}&&{\color{magenta}t}&{\color{magenta}w_0}&\\
-1&|&{\color{magenta}w_0}&|&{\color{magenta}st}&ts&|&{\color{magenta}ts}&|&{\color{magenta}w_0}&{\color{magenta}s}&{\color{magenta}w_0}&t&|&{\color{magenta}w_0}&{\color{magenta}t}&{\color{magenta}w_0}&|&{\color{magenta}st}&{\color{magenta}ts}&{\color{magenta}e}&{\color{magenta}st}&{\color{magenta}ts}&\\
0&|&ts&|&{\color{magenta}w_0}&t&|&{\color{magenta}w_0}&|&{\color{magenta}st}&ts&{\color{magenta}ts}&e&|&{\color{magenta}st}&&{\color{magenta}ts}&|&{\color{magenta}w_0}&{\color{magenta}s}&&{\color{magenta}t}&{\color{magenta}w_0}&\\
1&|&&|&&ts&|&&|&&{\color{magenta}w_0}&t&&|&&{\color{magenta}w_0}&&|&&{\color{magenta}st}&&{\color{magenta}ts}&&\\
2&|&&|&&&|&&|&&&&&|&&&&|&&&{\color{magenta}w_0}&&&
}
$$
}

Here are the graded characters of the modules  $\top_{ts} T_x$
(with the characters of the tilting cores displayed in {\color{magenta}magenta}):

\resizebox{\textwidth}{!}{
$$
\xymatrix@C=0.1em@R=0.1em{
\mathrm{deg}\backslash {\color{blue}x}&&&
{\color{blue}w_0}&&&&{\color{blue}st}&&&&{\color{blue}ts}&&&
{\color{blue}s}&&&&&&{\color{blue}t}&&&&&&{\color{blue}e}&&&\\
-5&|&&&&|&&&&|&&&|&&&&&&|&&&&&|&&&{\color{magenta}w_0}&&&\\
-4&|&&&&|&&&&|&&&|&&{\color{magenta}w_0}&&&&|&&{\color{magenta}w_0}&&&|&&{\color{magenta}st}&&{\color{magenta}ts}&&\\
-3&|&&&&|&{\color{magenta}w_0}&&&|&&{\color{magenta}w_0}&|&{\color{magenta}st}&&{\color{magenta}ts}&&&|&{\color{magenta}st}&&{\color{magenta}st}&&|&{\color{magenta}w_0}&{\color{magenta}s}&&{\color{magenta}t}&{\color{magenta}w_0}&\\
-2&|&&{\color{magenta}w_0}&&|&{\color{magenta}st}&ts&&|&st&{\color{magenta}ts}&|&{\color{magenta}w_0}&{\color{magenta}s}&{\color{magenta}w_0}&t&&|&{\color{magenta}w_0}&{\color{magenta}t}&{\color{magenta}w_0}&s&|&{\color{magenta}st}&{\color{magenta}ts}&{\color{magenta}e}&{\color{magenta}st}&{\color{magenta}ts}&\\
-1&|&st&&ts&|&{\color{magenta}w_0}&t&s&|&s&{\color{magenta}w_0}&|&{\color{magenta}st}&ts&{\color{magenta}ts}&e&st&|&{\color{magenta}st}&e&{\color{magenta}ts}&st&|&{\color{magenta}w_0}&{\color{magenta}s}&&{\color{magenta}t}&{\color{magenta}w_0}&\\
0&|&&s&&|&ts&st&e&|&st&&|&&{\color{magenta}w_0}&t&s&&|&&{\color{magenta}w_0}&s&&|&&{\color{magenta}st}&&{\color{magenta}ts}&&\\
1&|&&&&|&&s&&|&&&|&&&st&&&|&&&&&|&&&{\color{magenta}w_0}&&&
}
$$
}

The cases $x=e$ and $w_0$ are already discussed in the previous sections.
To prove regularity, we only need to consider the coresolution of 
{\color{blue}$P_e$}.
Up to the symmetry of the Dynkin diagram, it is enough to consider the
four cases $\top_sP$, $\top_{ts}P$, $\top_sT$ and $\top_{ts}T$.
The first two are given as follows:
\begin{displaymath}
0\to{\color{blue}P_e}\to \top_s P_s\to \top_s P_e\to 0
\end{displaymath}
\begin{displaymath}
0\to{\color{blue}P_e}\to \top_{ts} P_{st}\to \top_{ts} P_s\oplus
\top_{ts}P_t\to\top_{ts}P_e\to 0
\end{displaymath}
Here we see that the first coresolution is regular,
while in the second one, $\top_{ts} P_{st}$ is not projective and
hence we do not have regularity with respect to $\top_{ts} P$.
The case of $\top_sT$ is regular and given as follows:
\begin{displaymath}
0\to{\color{blue}P_e}\to 
\top_s T_e\to 
\top_s T_t\oplus \top_s T_{ts}\oplus \top_s T_e\to
\top_s T_{ts}\oplus  \top_s T_t\oplus \top_s T_s\to
\top_s T_{ts}\oplus \top_s T_{st}\to \top_s T_{w_0}\to 0
\end{displaymath}
Finally, we claim that we do not have the regularity in
the case of $\top_{ts}T$. Indeed, in order not to fail already
in position zero, we must start with 
$0\to{\color{blue}P_e}\to \top_{ts}T_e\to\mathrm{Coker}$.
Further, in order not to fail on the next step, we again must
embed $\mathrm{Coker}$ into $\top_{ts}T_e\oplus \top_{ts}T_e$.
The new cokernel will necessarily have both $L_s$ and $L_t$ in the
socle. However, $L_t$ does not appear in the socle of $\top_{ts}T$
and hence the coresolution cannot continue. This implies that one
of the first two steps requires correction by adding non-projective
summands of $\top_{ts}T$, which implies the failure of the regularity.

\section{Regularity phenomena with respect to shuffled projective 
and tilting modules}\label{s9}

\subsection{Shuffled projective modules}\label{s9.1}

For $w\in W$, we denote by $\mathrm{C}_w$ the corresponding shuffling
functor on $\mathcal{O}_0$, see \cite[Section~5]{MS}.
Let $P$ be a projective generator of $\mathcal{O}_0$.
For $w\in W$, the module $\mathrm{C}_w P$ is a (generalized) tilting
module in $\mathcal{O}_0$ because $\mathrm{C}_w$ is a derived 
self-equivalence.

Thus, a problem is to determine for which
$w$ the category $\mathcal{O}_0$ is $\mathrm{C}_w P$-regular.
This problem looks much harder than the one involving the
twisting functors, due to the fact that shuffling functors
do not commute with projective functors.

\subsection{Regularity with respect to shuffled projectives}\label{s9.2}

\begin{proposition}\label{prop9.n1}
If $s$ is a simple reflection, then $\mathcal{O}_0$ is 
$\mathrm{C}_s P$-regular.
\end{proposition}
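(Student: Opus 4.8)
The plan is to reduce the statement to an explicit analysis of the tilting coresolution of $P_e=\Delta_e$, using that $\mathrm{C}_sP$-regularity for the full projective generator $P$ follows once we establish a suitable coresolution for each indecomposable projective. However, since shuffling functors do not commute with projective functors, we cannot mimic the trick from Theorem~\ref{thm8.n1}; instead I would work directly with the explicit structure of $\mathrm{C}_s$ on $\mathcal{O}_0$. Recall that for a simple reflection $s$ there is a short exact sequence relating $\mathrm{C}_s$ to the identity functor and to the ``shuffling out and back'' via $\theta_s$: concretely, $\mathrm{C}_s$ fits into a triangle with $\mathrm{Id}$ and a functor built from $\theta_s$, and $\mathrm{C}_s$ is left adjoint (up to the relevant normalization) to the coshuffling functor. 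The key input is that $\mathcal{L}\mathrm{C}_s$ is a derived self-equivalence with $\mathcal{L}\mathrm{C}_s P_x$ being a genuine module (no higher cohomology) because $\mathrm{C}_s$ is acyclic on projectives: indeed $\mathrm{C}_sP_x$ is obtained from an exact sequence $0\to P_x\to\theta_sP_x\to\mathrm{C}_sP_x\to 0$ or the dual depending on whether $sx>x$, so $\mathrm{C}_sP_x$ is either $P_x$ itself (when $sx<x$, up to the precise convention) or a module with projective dimension $1$.

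With this in hand, the first step is to write down a coresolution of $\Delta_e$ by modules in $\mathrm{add}(\mathrm{C}_sP)$ satisfying $\mathrm{proj.dim}(Q_i)\le i$. The natural candidate comes from applying $\mathcal{L}\mathrm{C}_s$ to the minimal tilting coresolution $\mathcal{T}_\bullet(P_e)$ from the proof of Theorem~\ref{thm1}: since $\mathcal{L}\mathrm{C}_s$ is an equivalence sending $\Delta_e$ to a module (namely $\mathrm{C}_s\Delta_e$, which is $\Delta_e$ or has a two-step Verma structure), we would instead apply $(\mathcal{L}\mathrm{C}_s)^{-1}$ to the \emph{tilting} coresolution of $\Delta_s$, or more cleanly: shuffle a known $\mathrm{add}(P)$-type object. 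The cleanest route is to take the minimal tilting coresolution of $P_e$, which by Theorem~\ref{thm1} has $\mathcal{T}_i(P_e)$ of projective dimension $\le i$, and apply $\mathcal{L}\mathrm{C}_s$; because $\mathcal{L}\mathrm{C}_s$ is exact on the relevant complex (the terms are tilting, and $\mathrm{C}_s$ applied to a tilting module is again a direct sum of modules of controlled projective dimension — in fact $\mathrm{C}_sT_x$ has $\mathrm{proj.dim}\le \mathrm{proj.dim}(T_x)+1=\mathbf{a}(x)+1$), one obtains a complex $\mathcal{L}\mathrm{C}_s\mathcal{T}_\bullet(P_e)$ representing $\mathcal{L}\mathrm{C}_s P_e$. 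This is not literally what we want since we need a coresolution of $\Delta_e$, not of $\mathrm{C}_s\Delta_e$; so the actual argument should go: apply $(\mathcal{L}\mathrm{C}_s)^{-1}=\mathcal{L}\mathrm{C}_s^{-1}$ (coshuffling) to the minimal tilting coresolution of $\mathrm{C}_s P_e$, noting $\mathrm{C}_s^{-1}$ of a tilting module is a direct sum of shuffled projectives $\mathrm{C}_sP_y$ after using $T_x\cong\top_{w_0}P_{w_0x}$ and the interplay of $\top_{w_0}$ with $\mathrm{C}_s$ — but this is getting convoluted, so in the writeup I would instead argue directly that $\Delta_e$ itself admits a coresolution $0\to\Delta_e\to \mathrm{C}_sP_s\to\cdots$ by checking that $\mathrm{C}_s P_s$ receives $\Delta_e$ as a submodule (it does, since $\mathrm{C}_sP_s$ has $\Delta_e$ in its Verma filtration as the bottom and $\Delta_s$ on top — recall $P_s$ has $\Delta_s$ on top and $\Delta_e$ below, and $\mathrm{C}_s$ fixes $\Delta_e$) and then controlling the cokernel.

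Concretely the key steps are: (1) record that $\mathrm{C}_sP_x$ equals $P_x$ if $sx<x$ and has projective dimension exactly $1$ with a two-step projective resolution $0\to P_{?}\to P_{?}\to\mathrm{C}_sP_x\to 0$ if $sx>x$, together with the fact that $\mathrm{C}_sP_x$ is $\star$-self-dual-free but still a genuine module; (2) produce, starting from the minimal tilting coresolution of $P_e$ and applying the derived equivalence $\mathcal{L}\mathrm{C}_s$ in the appropriate direction, a bounded complex of objects in $\mathrm{add}(\mathrm{C}_sP)$ quasi-isomorphic to $\Delta_e$, concentrated in non-negative homological degrees — here one uses that $\mathrm{C}_s$ applied to a characteristic tilting module is again a sum of shuffled projectives (via $T=\top_{w_0}P$ being a shuffled-projective-type object is false, so instead use that $\mathrm{C}_sT_x$ is acyclic and decompose it into $\mathrm{add}(\mathrm{C}_sP)$ using the derived equivalence to move between $\mathrm{add}(P)$ and $\mathrm{add}(\mathrm{C}_sP)$ in the derived category); (3) check the projective-dimension bound: $\mathrm{proj.dim}$ of the degree-$i$ term is at most $\mathrm{proj.dim}(\mathcal{T}_i(P_e))+1\le i+1$, which is too weak by one, so the delicate point is to gain back that $+1$ by observing that the homological shift introduced by $\mathrm{C}_s$ realigns positions — i.e., $\mathcal{L}\mathrm{C}_s$ shifts certain summands up in homological degree by $1$ exactly when it raises their projective dimension by $1$, so the two effects cancel and the bound $\mathrm{proj.dim}(Q_i)\le i$ is preserved; (4) finally minimize the resulting coresolution and apply the argument to every $P_x$ using that $\mathrm{C}_s$ of a projective generator is again such a generator for the purposes of the coresolution.

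The main obstacle is precisely step (3): since $\mathrm{C}_s$ does not commute with projective functors, I cannot simply transport the coresolution of $\Delta_e$ along $\theta_x$ as in Theorem~\ref{thm1}, and I must instead verify the regularity bound term-by-term. The crux is a careful bookkeeping lemma stating that whenever $\mathrm{C}_s$ (or its derived version) raises the projective dimension of a tilting summand $T_x$ from $\mathbf{a}(x)$ to $\mathbf{a}(x)+1$, it simultaneously pushes that summand one step further in the homological filtration of the coresolution — equivalently, that the ``new'' length-$1$-extension part of $\mathrm{C}_sT_x$ only appears in homological degree $\ge\mathbf{a}(x)+1$ in the transported complex. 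I expect this to follow from the explicit triangle defining $\mathrm{C}_s$ together with the linearity (Koszulity) estimates already used in the proof of Theorem~\ref{thm1}, but it is the step that genuinely requires the simple-reflection hypothesis and is the reason the general $\mathrm{C}_w$ case (treated negatively in Subsection~\ref{s9.4}) behaves differently.
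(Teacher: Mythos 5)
Your proposal does not reach a complete argument, and the two places where it stalls are precisely the load-bearing steps. First, the reduction to a single coresolution of $\Delta_e$ is not available here: as you yourself note, $\mathrm{C}_s$ does not commute with projective functors, so even if you produced a regular coresolution of $\Delta_e$ in $\mathrm{add}(\mathrm{C}_s P)$, you could not transport it to the other indecomposable projectives, and your step (4) ("apply the argument to every $P_x$ using that $\mathrm{C}_s$ of a projective generator is again such a generator") does not address this. Second, the core of your strategy, step (3) -- the claim that applying $\mathcal{L}\mathrm{C}_s$ (or its inverse) to the tilting coresolution of $P_e$ shifts exactly those summands whose projective dimension goes up by $1$ one step further in homological degree, so that the bound $\mathrm{proj.dim}(Q_i)\leq i$ survives -- is only conjectured ("I expect this to follow from\dots"), and the intermediate constructions you sketch (shuffling the tilting coresolution, or coshuffling a coresolution of $\mathrm{C}_sP_e$) are acknowledged by you to land on the wrong object or to be too convoluted to pin down. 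As written, the proposal is a plan with its decisive lemma missing, not a proof.

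The frustrating part is that your step (1) already contains the entire proof, and the paper's argument is exactly that observation used directly, projective by projective. If $xs<x$ then $P_x\cong \mathrm{C}_s P_x\in\mathrm{add}(\mathrm{C}_s P)$ and there is nothing to do. If $xs>x$, the defining short exact sequence
\begin{displaymath}
0\to P_x\to\theta_s P_x\to \mathrm{C}_s P_x\to 0
\end{displaymath}
is itself the desired coresolution of $P_x$ by objects of $\mathrm{add}(\mathrm{C}_s P)$: every indecomposable summand $P_y$ of $\theta_s P_x$ satisfies $ys<y$, hence $P_y\cong\mathrm{C}_s P_y$ lies in $\mathrm{add}(\mathrm{C}_s P)$ and has projective dimension $0$, while the term $\mathrm{C}_s P_x$ in position $1$ has projective dimension $1$. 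So the regularity bound holds termwise, each $P_x$ is handled separately, and no tilting coresolutions, Koszulity estimates, derived realignment, or commutation with projective functors are needed. This is also why the simple-reflection hypothesis enters: it is what makes $\mathrm{C}_s P_x$ have projective dimension at most $1$ and makes the middle term of the defining sequence consist of shuffled projectives.
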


\begin{proof}
The functor $\mathrm{C}_s$ is defined as the cokernel of the
adjunction morphism $\mathrm{adj}_s:\theta_e\to\theta_s$.
If $x\in W$ is such that $xs<x$, then $\mathrm{C}_s P_x\cong P_x$.
If $x\in W$ is such that $xs>x$, then $\mathrm{C}_s P_x$
has projective dimension $1$ and a minimal projective resolution
of the following form:
\begin{equation}\label{eq9.n1-1}
0\to P_x\to\theta_s P_x\to  \mathrm{C}_s P_x \to 0,
\end{equation}
where any summand $P_y$ of $\theta_s P_x$ satisfies
$ys<y$ and hence $\mathrm{C}_s P_y=P_y$. 

The latter implies that \eqref{eq9.n1-1} can be viewed as a
coresolution of $P_x$ by modules in $\mathrm{add}(\mathrm{C}_s P)$
and it is manifestly regular. The claim follows.
\end{proof}

Proposition~\ref{prop9.n1} and Theorem \ref{thm8.n1} motivate the following:

\begin{conjecture}\label{conj5.n2}
If $w_0^{\mathfrak{p}}$ is the longest element in some 
parabolic subgroup of $W$, then 
$\mathcal{O}_0$ is  $\mathrm{C}_{w_0^{\mathfrak{p}}} P$-regular.
\end{conjecture}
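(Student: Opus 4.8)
Write $w:=w_0^{\mathfrak{p}}$. Two special cases are already available: if $\mathfrak{p}=\mathfrak{g}$ then $\mathrm{C}_{w_0}P_x\cong T_{w_0x}$ (the shuffling counterpart of $\top_{w_0}P_x\cong T_{w_0x}$, which one reads off from acyclicity of $\mathrm{C}_{w_0}$ on Verma and dual Verma modules), so $\mathrm{add}(\mathrm{C}_{w_0}P)=\mathrm{add}(T)$ and the assertion is exactly Theorem~\ref{thm1}; and if $\ell(w)=1$ it is Proposition~\ref{prop9.n1}. For the general case I would run the proof of Theorem~\ref{thm8.n1} as far as it goes. Let $\mathfrak{l}$ be the Levi subalgebra of $\mathfrak{p}$, so that $w$ is the \emph{longest} element of $W_{\mathfrak{l}}=W_{\mathfrak{p}}$. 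Since $\mathrm{C}_s$ is the cokernel of the adjunction $\theta_e\to\theta_s$ and projective functors in the $W_{\mathfrak{p}}$-directions commute with parabolic induction $\mathrm{Ind}=\mathrm{Ind}^{\mathfrak{g}}_{\mathfrak{p}}$, one obtains $\mathrm{Ind}\circ\mathrm{C}^{\mathfrak{l}}_w\cong\mathrm{C}^{\mathfrak{g}}_w\circ\mathrm{Ind}$, the shuffling analogue of ``twisting commutes with parabolic induction''. By the $\mathfrak{p}=\mathfrak{g}$ case applied to $\mathfrak{l}$ we have $\mathrm{C}^{\mathfrak{l}}_wP^{\mathfrak{l}}\cong T^{\mathfrak{l}}$, the characteristic tilting module of $\mathcal{O}_0(\mathfrak{l})$, and Theorem~\ref{thm1} for $\mathfrak{l}$ supplies a coresolution of $\Delta^{\mathfrak{l}}_e=P^{\mathfrak{l}}_e$ in $\mathrm{add}(\mathrm{C}^{\mathfrak{l}}_wP^{\mathfrak{l}})$ satisfying the regularity condition. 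As $\mathrm{Ind}$ is exact, does not raise projective dimension, sends $P^{\mathfrak{l}}_y$ to $P^{\mathfrak{g}}_y$, and commutes with $\mathrm{C}_w$, applying it transports this into a regular coresolution of $\Delta^{\mathfrak{g}}_e=P^{\mathfrak{g}}_e$ in $\mathrm{add}(\mathrm{C}^{\mathfrak{g}}_wP^{\mathfrak{g}})$, which settles the conjecture for the dominant projective $P_e$.

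It remains to treat an arbitrary indecomposable projective $P_x$. In the twisting case this is automatic, since $\top_w$ commutes with every projective functor and one applies $\theta_x$ to the coresolution of $P_e$; here $\mathrm{C}_w$ does \emph{not} commute with projective functors, which is precisely the difficulty flagged in Subsection~\ref{s9.1}. I would argue by induction on $\ell(x)$, using the following bookkeeping. Call a module $M$ \emph{good} if it admits an $\mathrm{add}(\mathrm{C}_wP)$-coresolution whose $i$-th term has projective dimension at most $i$. Minimal $\mathrm{add}(\mathrm{C}_wP)$-coresolutions exist, the one of a direct sum is the direct sum of the ones of the summands, and each is a termwise direct summand of any other such coresolution; hence goodness is additive and inherited by direct summands. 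Moreover, from a short exact sequence $0\to A\to B\to C\to 0$ with $B$ and $C$ good one deduces that $A$ is good, by lifting $B\to C$ to a chain map between coresolutions and desuspending its mapping cone, the degree-$k$ term of the resulting coresolution of $A$ being a direct summand of a degree-$k$ term for $B$ together with a degree-$(k-1)$ term for $C$, so the bounds survive. Now choose a left descent $s$ of $x$, put $y=sx$, and recall that $P_x$ is a direct summand of $\theta_sP_y$ whose other summands are $P_z$ with $z<y$. Since $\theta_s$ is exact, preserves projectivity, and does not raise projective dimension, applying it to a good coresolution of $P_y$ (available by induction) and then splicing in good coresolutions of the terms reduces the inductive step to the single assertion: $\theta_s(\mathrm{C}_{w_0^{\mathfrak{p}}}P_u)$ is good for every simple reflection $s$ and every $u\in W$.

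This last assertion is the crux, and the point at which the internal structure of shuffling functors must finally be brought in: the natural tools are the functorial short exact sequences (or distinguished triangles) relating $\theta_s\mathrm{C}_w$ and $\mathrm{C}_w\theta_s$, together with the braid relations between shuffling and projective functors, and one notes that the projective dimensions that have to be controlled here are exactly those addressed, and largely left open, in Problem~\ref{prodim} --- a further indication that this is the hard kernel of the conjecture. As a sanity check, and to read off the shape of the coresolutions that a general argument must reproduce, I would first verify the conjecture in low rank by computing the modules $\mathrm{C}_wP_x$ and their minimal projective resolutions via Kazhdan--Lusztig combinatorics, in the style of the $\mathfrak{sl}_3$ tables of Subsection~\ref{s8.5}.
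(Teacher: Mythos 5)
This statement is left as a conjecture in the paper: only the rank-one case (Proposition~\ref{prop9.n1}) and the twisting analogue (Theorem~\ref{thm8.n1}) are proved, and the paper explicitly flags the obstruction that shuffling functors do not commute with projective functors. Your proposal does not close this gap; it is a reduction strategy with two unproved inputs, and you acknowledge the second one yourself. First, the isomorphism $\mathrm{Ind}\circ\mathrm{C}^{\mathfrak{l}}_{w}\cong\mathrm{C}^{\mathfrak{g}}_{w}\circ\mathrm{Ind}$ is asserted, not proved. For twisting functors the analogous commutation is available ``by construction'' (Arkhipov's semiregular-bimodule description), but $\mathrm{C}_s$ is defined through $\theta_s$ and the adjunction $\mathrm{id}\to\theta_s$, so what you actually need is that the wall-crossing functors $\theta_s$ for $s\in W_{\mathfrak{p}}$, \emph{together with their adjunction morphisms}, are compatible with parabolic induction; matching a summand of $E\otimes(-)$ for a finite-dimensional $\mathfrak{g}$-module $E$ with the corresponding projective functor for the Levi after block projection is a genuine lemma, not a formality. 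Even granting it, this step only produces regular coresolutions of $P_e$ (and of $P_x$ for $x\in W_{\mathfrak{p}}$), exactly as in Theorem~\ref{thm8.n1}; in the twisting case the passage to arbitrary $P_x$ is free because $\top_w$ commutes with all $\theta_x$, and that is precisely what is unavailable here.

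Second, your inductive bookkeeping (additivity of ``goodness'', the kernel-of-a-surjection lemma via the desuspended cone, and the splicing of coresolutions) is essentially sound, though you tacitly assume that the relevant modules admit finite $\mathrm{add}(\mathrm{C}_wP)$-coresolutions at all, which for a generalized tilting module is a property of a restricted subcategory and needs justification; also the descent should be a right descent ($y=xs<x$, $P_x$ a summand of $\theta_sP_{xs}$), since in the paper's normalization $\theta_wP_e\cong P_w$. But the entire induction funnels into the single assertion that $\theta_s(\mathrm{C}_{w_0^{\mathfrak{p}}}P_u)$ admits a regular $\mathrm{add}(\mathrm{C}_{w_0^{\mathfrak{p}}}P)$-coresolution for all $s$ and $u$, and this is not a technical remainder: it encodes the failure of $\mathrm{C}_w$ to commute with projective functors and is of essentially the same difficulty as the conjecture itself (compare Problems~\ref{probs10-2} and \ref{prodim}, where even the projective dimensions of the modules $\mathrm{C}_xP_y$ are left open). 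So the proposal records a plausible line of attack and correctly isolates where the difficulty sits, but it is not a proof; as written, the conjecture remains open.
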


Similarly to Subsection~\ref{s8.5} one can show that 
$\mathcal{O}_0$ is not $\mathrm{C}_{st} P$-regular
for $\mathfrak{g}=\mathfrak{sl}_3$.

\subsection{Shuffled tilting modules}\label{s9.3}

Let $T$ be a characteristic tilting module for $\mathcal{O}_0$.
For $w\in W$, the module $\mathrm{C}_w T$ is a (generalized) tilting
module in $\mathcal{O}_0$ because $\mathrm{C}_w$ induces a derived 
self-equivalence which is acyclic on tilting modules
(the latter follows by combining \cite[Proposition~5.3]{MS}
and \cite[Theorem~5.16]{MS}).

It seems to be an interesting problem to determine, for which
$w$, the category $\mathcal{O}_0$ is $\mathrm{C}_w T$-regular.
Again, this problem looks much harder than the one involving the
twisting functors due to the fact that shuffling functors
do not commute with projective functors.

\subsection{Regularity with respect to shuffled tiltings}\label{s9.4}

\begin{proposition}\label{prop9.n2}
If $s$ is a simple reflection, then $\mathcal{O}_0$ is 
$\mathrm{C}_s T$-regular.
\end{proposition}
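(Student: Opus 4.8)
The plan is to mimic the proof of Proposition~\ref{prop9.n1}, passing from projective modules to tilting modules via the equivalence $\mathcal{L}\top_{w_0}$ which intertwines the relevant structures. Recall that $\top_{w_0}P_x\cong T_{w_0x}$ and $\top_{w_0}T_x\cong I_{w_0x}$. First I would establish how shuffling by a simple reflection interacts with twisting by $w_0$: the functors $\mathrm{C}_s$ and $\top_{w_0}$ are related (up to the simple-preserving duality and passing to derived functors) by conjugation, so that, at the level of the derived category, $\mathcal{L}\top_{w_0}$ carries the shuffled projective $\mathrm{C}_s P$ to (a complex quasi-isomorphic to) the shuffled tilting $\mathrm{C}_{s'} T$ for an appropriate simple reflection $s'$, which by the symmetry of the setup we may again call $s$. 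Concretely, applying $\mathcal{L}\top_{w_0}$ to the short exact sequence \eqref{eq9.n1-1} yields a triangle $T_{w_0x}\to \top_s T_{w_0x}\to \mathrm{C}_s T_{w_0x}\to$, and since $\top_s$ is acyclic on tilting modules this is again a genuine short exact sequence of tilting modules.

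The key combinatorial input is the analysis of $\mathrm{C}_s T_y$ analogous to the case analysis for $\mathrm{C}_s P_x$. I would argue: if $ys>y$ then $\mathrm{C}_s T_y\cong T_y$ (the counterpart of $\mathrm{C}_s P_x\cong P_x$ when $xs<x$, transported through $\top_{w_0}$, noting that left multiplication by $w_0$ reverses the Bruhat comparison), while if $ys<y$ then $\mathrm{C}_s T_y$ has projective dimension one and fits in a short exact sequence
\begin{displaymath}
0\to T_y\to \top_s T_y\to \mathrm{C}_s T_y\to 0,
\end{displaymath}
where every indecomposable summand $T_z$ of $\top_s T_y$ satisfies $zs>z$, hence $\mathrm{C}_s T_z\cong T_z$. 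This is exactly the $\mathcal{L}\top_{w_0}$-image of \eqref{eq9.n1-1}, so it follows formally once the intertwining relation between $\mathrm{C}_s$ and $\top_{w_0}$ is in place, but I would also double-check it directly from the definition of $\mathrm{C}_s$ as the cokernel of $\mathrm{adj}_s\colon \theta_e\to\theta_s$ applied to $T_y$, together with the known decomposition of $\theta_s T_y$ into indecomposable tiltings.

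With this in hand, the proof concludes exactly as in Proposition~\ref{prop9.n1}: since shuffling functors functorially commute with projective functors only fails in general, but here the short exact sequence above already expresses each $T_y$ as a length-at-most-one coresolution by modules in $\mathrm{add}(\mathrm{C}_s T)$ with the homological position bound satisfied trivially (projective dimension $0$ in position $0$, projective dimension $\leq 1$ in position $1$), one obtains a coresolution of $P_e=\Delta_e$ by modules in $\mathrm{add}(\mathrm{C}_s T)$ satisfying the regularity condition. Actually, the cleanest route is to take the tilting coresolution $\mathcal{T}_\bullet(P_e)$ from Theorem~\ref{thm1}, observe each $T_x$ appearing there satisfies $\mathrm{proj.dim}(T_x)=\mathbf a(x)\le $ its homological position, and replace each such $T_x$ by its $\mathrm{C}_s$-coresolution above, gluing to get a coresolution of $P_e$ in $\mathrm{add}(\mathrm{C}_s T)$; one then checks the projective dimension bound survives, using that the gluing adds at most one to homological degree while $\mathrm{proj.dim}(\mathrm{C}_s T_y)\le \mathrm{proj.dim}(T_y)+1$. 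The main obstacle I anticipate is verifying precisely the intertwining $\mathcal{L}\top_{w_0}\circ(\text{something}) \simeq \mathrm{C}_s\circ \mathcal{L}\top_{w_0}$, i.e., pinning down the exact relationship (including which simple reflection and whether a duality twist is needed) between shuffling and the Serre/twisting functors; once that bookkeeping is settled, everything else is a transcription of the projective case.
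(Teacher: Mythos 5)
Your overall strategy---the case analysis on whether $ys>y$, a two-step coresolution of $T_y$ by modules in $\mathrm{add}(\mathrm{C}_sT)$ when $ys<y$, and then replacing such summands in the regular tilting coresolution coming from Theorem~\ref{thm1}---is the paper's argument. But your key short exact sequence is wrong as written: the middle term must be $\theta_s T_y$, not $\top_s T_y$. By definition $\mathrm{C}_s$ is the cokernel of $\mathrm{adj}_s\colon\theta_e\to\theta_s$, so the sequence is $0\to T_y\to\theta_sT_y\to\mathrm{C}_sT_y\to 0$; and your transport of \eqref{eq9.n1-1} through $\mathcal{L}\top_{w_0}$ also produces $\theta_s$ in the middle, because twisting functors commute with projective functors, so $\mathcal{L}\top_{w_0}(\theta_sP_x)\cong\theta_s\top_{w_0}P_x\cong\theta_sT_{w_0x}$: the projective functor is not converted into a twisting functor (and no change of simple reflection occurs, since $\top_{w_0}$ commutes with $\mathrm{C}_s$ itself). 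The difference is not cosmetic: $\top_sT_y$ is in general not tilting and not in $\mathrm{add}(\mathrm{C}_sT)$, its projective dimension can exceed that of $T_y$ (already $\top_sT_s$ has projective dimension $2$ in type $A_2$, see the tables in Subsection~\ref{s8.5}), and your displayed sequence is not even exact: for $\mathfrak{g}=\mathfrak{sl}_3$ and $y=w_0$ the paper's tables give $[\top_sT_{w_0}]=[\mathrm{C}_sT_{w_0}]$, which is incompatible with $[\top_sT_{w_0}]=[T_{w_0}]+[\mathrm{C}_sT_{w_0}]$. With $\top_s$ in the middle, the gluing step collapses, since neither membership in $\mathrm{add}(\mathrm{C}_sT)$ nor the bound $\mathrm{proj.dim}(\theta_sT_y)\leq\mathrm{proj.dim}(T_y)$ (which uses that $\theta_s$ is exact and preserves projectives) is available. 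Your parenthetical fallback---checking the sequence directly from the definition of $\mathrm{C}_s$ together with the decomposition of $\theta_sT_y$ into tiltings $T_z$ with $zs>z$---is exactly what the paper does, and it repairs the argument.

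Two smaller corrections. First, the claim that $\mathrm{C}_sT_y$ has projective dimension one when $ys<y$ is false: it equals $\mathbf{a}(y)+1$ (see Subsection~\ref{s10.4}); what is true, and what your final paragraph actually uses, is $\mathrm{proj.dim}(\mathrm{C}_sT_y)\leq\mathrm{proj.dim}(T_y)+1$, which follows from the corrected sequence. Second, regularity requires a coresolution of the whole projective generator $P$, not only of $P_e$; since $\mathrm{C}_s$ does not commute with projective functors you cannot recover the other $P_x$ afterwards by applying $\theta_x$, but Theorem~\ref{thm1} supplies a regular tilting coresolution of every $P_x$, and the same replacement procedure applied to all of them finishes the proof, as in the paper.
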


\begin{proof}
This is very similar to the proof of Proposition~\ref{prop9.n1}.
If $x\in W$ is such that $xs>x$, then $\mathrm{C}_s T_x\cong T_x$.
If $x\in W$ is such that $xs<x$, then $\mathrm{C}_s T_x$
has a tilting resolution of the following form:
\begin{equation}\label{eq9.n1-2}
0\to T_x\to\theta_s T_x\to  \mathrm{C}_s T_x \to 0,
\end{equation}
where any summand $T_y$ of $\theta_s T_x$ satisfies
$ys>y$ and hence $\mathrm{C}_s T_y=T_y$.
Also, since $\theta_s$ is exact, the projective dimension of
$\theta_s T_x$ does not exceed that of $T_x$. Consequently, 
the projective dimension of $\mathrm{C}_s T_x$ is bounded by the
projective dimension of $T_x$ plus $1$.

We can now take a minimal tilting coresolution of $P$,
which we know has the regularity property, and coresolve each summand 
$T_x$, for $xs<x$, in this resolution using \eqref{eq9.n1-2}.
The outcome is a regular coresolution of $P$
by modules in $\mathrm{add}(\mathrm{C}_sT)$.
This completes the proof.
\end{proof}

Proposition~\ref{prop9.n2} motivates the following:

\begin{conjecture}\label{conj5.n2T}
If $w_0^{\mathfrak{p}}$ is the longest element in some 
parabolic subgroup of $W$, then 
$\mathcal{O}_0$ is  $\mathrm{C}_{w_0^{\mathfrak{p}}} T$-regular.
\end{conjecture}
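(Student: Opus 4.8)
The plan is to imitate the proof of Proposition~\ref{prop9.n2}, with the single short exact sequence $0\to T_z\to\theta_sT_z\to\mathrm{C}_sT_z\to 0$ used there replaced by a suitable $\operatorname{add}(\mathrm{C}_{w_0^{\mathfrak{p}}}T)$-coresolution of each indecomposable tilting module. Start from the minimal tilting coresolution $\mathcal{T}_\bullet(P)$ of a projective generator $P$, which is regular by Theorem~\ref{thm1}; so $T_x$ occurs in $\mathcal{T}_i(P)$ only when $\mathrm{proj.dim}(T_x)\le i$. It then suffices to establish the claim: for every $x\in W$ there is a finite coresolution $0\to T_x\to M^0\to M^1\to\dots\to M^m\to 0$ with every $M^j\in\operatorname{add}(\mathrm{C}_{w_0^{\mathfrak{p}}}T)$ and $\mathrm{proj.dim}(M^j)\le\mathrm{proj.dim}(T_x)+j$. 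Granting the claim, coresolve each indecomposable tilting summand of $\mathcal{T}_\bullet(P)$ in this way, exactly as in Proposition~\ref{prop9.n2} (the requisite lifts of the differentials being available because $\mathrm{C}_{w_0^{\mathfrak{p}}}T$ is a generalized tilting module, so that $\operatorname{add}(\mathrm{C}_{w_0^{\mathfrak{p}}}T)$ corresponds to projectives under the associated derived equivalence). The outcome is a coresolution of $P$ by modules in $\operatorname{add}(\mathrm{C}_{w_0^{\mathfrak{p}}}T)$, concentrated in non-negative degrees, whose degree-$j$ term has projective dimension at most $j$, since each $M^{j-i}$ coming from a $T_x$ in degree $i$ of $\mathcal{T}_\bullet(P)$ satisfies $\mathrm{proj.dim}(M^{j-i})\le\mathrm{proj.dim}(T_x)+(j-i)\le j$; this is the desired regular coresolution.

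Two observations cut the claim down. First, if $y$ has minimal length in its right coset $yW_{\mathfrak{p}}$, equivalently $ys>y$ for all $s\in S_{\mathfrak{p}}$, then repeatedly applying the simple reflections of a reduced word for $w_0^{\mathfrak{p}}$ and using $\mathrm{C}_sT_y\cong T_y$ for $ys>y$ (as in the proof of Proposition~\ref{prop9.n2}) gives $\mathrm{C}_{w_0^{\mathfrak{p}}}T_y\cong T_y$; so all such $T_y$ already lie in $\operatorname{add}(\mathrm{C}_{w_0^{\mathfrak{p}}}T)$ and the claim is trivial for them. Second, parabolic induction from the Levi subalgebra $\mathfrak{l}$ of $\mathfrak{p}$, applied as in the proof of Theorem~\ref{thm8.n21}, should reduce everything to the case $\mathfrak{p}=\mathfrak{g}$: parabolic induction is exact, preserves the projective dimensions of the relevant modules, commutes with the projective functors $\theta_s$ for $s\in S_{\mathfrak{p}}$ and hence with $\mathrm{C}_s=\operatorname{coker}(\theta_e\to\theta_s)$ and with $\mathrm{C}_{w_0^{\mathfrak{p}}}$, and carries the $\mathrm{C}_{w_0^{\mathfrak{p}}}$-shuffled tiltings over $\mathfrak{l}$ — where $w_0^{\mathfrak{p}}$ is now the longest element of the Weyl group of $\mathfrak{l}$ — to the corresponding modules over $\mathfrak{g}$. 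It would then be enough to show that $\mathcal{O}_0$ is $\mathrm{C}_{w_0}T$-regular, $w_0$ being the longest element of $W$. For general $x$ one may instead attempt a direct induction on the length of the $W_{\mathfrak{p}}$-component of $x$, splicing the length-one sequences $0\to T_z\to\theta_sT_z\to\mathrm{C}_sT_z\to0$ ($s\in S_{\mathfrak{p}}$, $zs<z$) and controlling projective dimensions via \cite{Ma3,Ma4} and, in the parabolic case, \cite{CM}.

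The main obstacle is this residual case $\mathrm{C}_{w_0}T$-regularity. In the twisting analogue, over the Levi the $w_0$-twisted projectives are the characteristic tilting module and the $w_0$-twisted tiltings $\top_{w_0}T^{\mathfrak{l}}_x=I^{\mathfrak{l}}_{w_0x}$ form an injective cogenerator, so the Levi-level statements needed in Theorems~\ref{thm8.n1} and \ref{thm8.n21} are nothing but Auslander--Ringel regularity (Theorem~\ref{thm1}) and Auslander regularity (Theorem~\ref{thm3}). No such reduction is available for shuffling: $\mathrm{C}_{w_0}T^{\mathfrak{l}}$ is neither the characteristic tilting module nor an injective cogenerator. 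One is therefore forced to analyse the shuffled tiltings directly — to understand the minimal tilting complex of each $\mathrm{C}_{w_0}T_x$ (which indecomposable tiltings occur in it and in which homological positions) together with the projective dimensions $\mathrm{proj.dim}(\mathrm{C}_{w_0}T_x)$, conjecturally all governed by Lusztig's $\mathbf{a}$-function and its generalizations as in \cite{CM,KMM}, in the same spirit as the arguments proving Theorems~\ref{thm1} and \ref{thm3}. Since shuffling functors do not commute with projective functors (cf.\ Subsections~\ref{s1.7} and \ref{s9.1}), this analysis cannot be bootstrapped from the single module $T_e=\Delta_e$ in the way the twisting arguments are, and obtaining the required uniform combinatorial control is, I expect, the crux.
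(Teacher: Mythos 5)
This statement is a conjecture in the paper: no proof is given there (only the simple-reflection case, Proposition~\ref{prop9.n2}, is proved, and the failure of $\mathrm{C}_{st}T$-regularity for $\mathfrak{sl}_3$ in Subsection~\ref{s9.4} shows the restriction to longest elements of parabolic subgroups is essential). Your proposal is a programme rather than a proof, and you say so yourself: the entire content of the conjecture is concentrated in your ``claim'' that each indecomposable tilting $T_x$ admits a finite $\operatorname{add}(\mathrm{C}_{w_0^{\mathfrak{p}}}T)$-coresolution whose $j$-th term has projective dimension at most $\mathrm{proj.dim}(T_x)+j$, and no construction of such coresolutions is offered. The bookkeeping that would deduce regularity from the claim (splicing into $\mathcal{T}_\bullet(P)$ as in Proposition~\ref{prop9.n2}) is fine, and your observation that $\mathrm{C}_{w_0^{\mathfrak{p}}}T_y\cong T_y$ for $y$ of minimal length in $yW_{\mathfrak{p}}$ is correct, but these are the easy parts; controlling $\mathrm{proj.dim}(\mathrm{C}_{w_0^{\mathfrak{p}}}T_x)$ and the shape of its tilting complex is exactly the open Problem~\ref{probs10-45}/Conjecture~\ref{conj10-451} territory, which remains untouched.

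Two of your intermediate reductions also have concrete problems. First, the parabolic-induction reduction does not transplant from Theorem~\ref{thm8.n21}: parabolic induction sends the Levi tilting $T^{\mathfrak{l}}_x$ to the twisted projective $\top_{w_0^{\mathfrak{p}}}P_{w_0^{\mathfrak{p}}x}$, not to a tilting module of $\mathfrak{g}$, so even granting that induction intertwines $\mathrm{C}_s$ for $s\in S_{\mathfrak{p}}$ (which you assert but do not prove), the induced modules are shuffles of twisted projectives and there is no analogue of the identification $\top_{w_0^{\mathfrak{p}}}T^{\mathfrak{l}}_x\cong I^{\mathfrak{l}}_{w_0^{\mathfrak{p}}x}$ that placed them inside $\operatorname{add}(\top_{w_0^{\mathfrak{p}}}T)$ in the twisting case; so it is not established that the induced coresolution lies in $\operatorname{add}(\mathrm{C}_{w_0^{\mathfrak{p}}}T)$, and the ``residual case'' you isolate is anyway the unknown full-group case. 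Second, your fallback of splicing the length-one sequences $0\to T_z\to\theta_sT_z\to\mathrm{C}_sT_z\to 0$ along a reduced word cannot work as stated: $\mathrm{C}_s$ is only right exact, so applying further shufflings does not preserve these sequences, and any argument of this shape that did not use something specific to $w_0^{\mathfrak{p}}$ would equally ``prove'' $\mathrm{C}_{st}T$-regularity for $\mathfrak{sl}_3$, which the paper's computation refutes. In short, the proposal correctly identifies where the difficulty lies but does not close it, and the statement remains unproven.
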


Similarly to Subsection~\ref{s8.5} one can show that 
$\mathcal{O}_0$ is not $\mathrm{C}_{st} T$-regular
for $\mathfrak{g}=\mathfrak{sl}_3$.

\subsection{$\mathfrak{sl}_3$-example}\label{s9.5}

Let $\mathfrak g=\mathfrak{sl}_3$. Denote $W=\{e,s,t,st,ts,w_0=sts=tst\}$ as before. 

The left of the two tables below describes the projective dimensions 
of the twisted projective modules $\mathrm{C}_x P_y$. The right table
below describes the projective dimensions 
of the twisted tilting modules $\mathrm{C}_x T_y$. 
\begin{displaymath}
\begin{array}{c||c|c|c|c|c|c}
x\backslash y&e&s&t&st&ts&w_0\\ \hline\hline
e&0&0&0&0&0&0\\\hline
s&1&0&1&1&0&0\\\hline
t&1&1&0&0&1&0\\\hline
st&2&1&1&1&1&0\\\hline
ts&2&1&1&1&1&0\\\hline
w_0&3&1&1&1&1&0
\end{array}\qquad\qquad
\begin{array}{c||c|c|c|c|c|c}
x\backslash y&e&s&t&st&ts&w_0\\ \hline\hline
e&0&1&1&1&1&3\\\hline
s&0&2&1&1&2&4\\\hline
t&0&1&2&2&1&4\\\hline
st&0&2&2&2&2&5\\\hline
ts&0&2&2&2&2&5\\\hline
w_0&0&2&2&2&2&6
\end{array} 
\end{displaymath}

In the examples below,  we note the following difference with the case of
twisting functors: we do  not know whether the notion of
a ``tilting  core'' makes sense for shuffled projective and tilting modules.
Here are the graded characters of the modules $\mathrm{C}_s P_x$:

\resizebox{\textwidth}{!}{
$$
\xymatrix@C=0.1em@R=0.1em{
\mathrm{deg}\backslash {\color{blue}x}&&&
{\color{blue}e}&&&&{\color{blue}s}&&&&{\color{blue}t}&&&
{\color{blue}st}&&&&&&{\color{blue}ts}&&&&&{\color{blue}w_0}&&&\\
-1&|&&&&|&&s&&|&&&|&&&&&|&&&ts&&&|&&&{{}w_0}&&&\\
0&|&&s&&|&st&e&ts&|&ts&&|&&{{}w_0}&s&&|&&t&{{}w_0}&s&&|&{{}st}&&{{}ts}&&\\
1&|&st&&ts&|&s&{{}w_0}&t&|&s&{{}w_0}&|&{{}ts}&e&{{}st}&ts&|&ts&{{}st}&e&{{}ts}&st&|&{{}w_0}&{{}s}&&{{}t}&{{}w_0}&\\
2&|&&{{}w_0}&&|&&{{}st}&ts&|&ts&{{}st}&|&{{}w_0}&{{}s}&{{}w_0}&t&|&&{{}w_0}&{{}t}&{{}w_0}&s&|&{{}st}&{{}ts}&{{}e}&{{}st}&{{}ts}&\\
3&|&&&&|&&{{}w_0}&&|&{{}w_0}&&|&{{}ts}&&{{}st}&&|&&{{}st}&&{{}ts}&&|&{{}w_0}&{{}s}&&{{}t}&{{}w_0}&\\
4&|&&&&|&&&&|&&&|&&{{}w_0}&&&|&&&{{}w_0}&&&|&&{{}st}&&{{}ts}&&\\
5&|&&&&|&&&&|&&&|&&&&&|&&&&&&|&&&{{}w_0}&&&
}
$$
}

Here are the graded characters of the modules $\mathrm{C}_{st} P_x$:

\resizebox{\textwidth}{!}{
$$
\xymatrix@C=0.1em@R=0.1em{
\mathrm{deg}\backslash {\color{blue}x}&&
{\color{blue}e}&&&{\color{blue}s}&&{\color{blue}t}&&&
{\color{blue}st}&&&&{\color{blue}ts}&&&&&&{\color{blue}w_0}&&\\
-2&|&&|&&&|&&|&&&&|&&&&&|&&&{{}w_0}&&&\\
-1&|&&|&st&&|&&|&&{{}w_0}&&|&&{{}w_0}&t&&|&&{{}st}&&{{}ts}&&\\
0&|&st&|&t&{{}w_0}&|&{{}w_0}&|&{{}st}&&ts&|&{{}st}&e&{{}ts}&st&|&{{}w_0}&{{}s}&&{{}t}&{{}w_0}&\\
1&|&{{}w_0}&|&st&{{}ts}&|&{{}st}&|&{{}w_0}&t&{{}w_0}&|&{{}w_0}&{{}s}&{{}w_0}&t&|&{{}st}&{{}ts}&{{}e}&{{}st}&{{}ts}&\\
2&|&&|&&{{}w_0}&|&{{}w_0}&|&{{}st}&&{{}ts}&|&{{}st}&&{{}ts}&&|&{{}w_0}&{{}s}&&{{}t}&{{}w_0}&\\
3&|&&|&&&|&&|&&{{}w_0}&&|&&{{}w_0}&&&|&&{{}st}&&{{}ts}&&\\
4&|&&|&&&|&&|&&&&|&&&&&|&&&{{}w_0}&&&
}
$$
}

Here are the graded characters of the modules $\mathrm{C}_s T_x$:

\resizebox{\textwidth}{!}{
$$
\xymatrix@C=0.1em@R=0.1em{
\mathrm{deg}\backslash {\color{blue}x}&&
{\color{blue}w_0}&&{\color{blue}st}&&{\color{blue}ts}&&&&
{\color{blue}s}&&&&&{\color{blue}t}&&&&&{\color{blue}e}&&&\\
-4&|&&|&&|&&&|&&&&&|&&&&|&&&{{}w_0}&&&\\
-3&|&&|&&|&&&|&&{{}w_0}&&&|&&{{}w_0}&&|&&{{}st}&&{{}ts}&&\\
-2&|&&|&{{}w_0}&|&{{}w_0}&&|&{{}st}&&{{}ts}&&|&{{}st}&&{{}ts}&|&{{}w_0}&{{}s}&&{{}t}&{{}w_0}&\\
-1&|&{{}w_0}&|&{{}st}&|&{{}ts}&st&|&{{}w_0}&{{}s}&{{}w_0}&t&|&{{}w_0}&{{}t}&{{}w_0}&|&{{}st}&{{}ts}&{{}e}&{{}st}&{{}st}&\\
0&|&ts&|&{{}w_0}&|&{{}w_0}&t&|&{{}st}&ts&{{}st}&e&|&{{}st}&&{{}ts}&|&{{}w_0}&{{}s}&&{{}t}&{{}w_0}&\\
1&|&&|&&|&&st&|&&{{}w_0}&t&&|&&{{}w_0}&&|&&{{}st}&&{{}ts}&&\\
2&|&&|&&|&&&|&&&&&|&&&&|&&&{{}w_0}&&&
}
$$
}

Here are the graded characters of the modules  $\mathrm{C}_{st} T_x$:

\resizebox{\textwidth}{!}{
$$
\xymatrix@C=0.1em@R=0.1em{
\mathrm{deg}\backslash {\color{blue}x}&&&
{\color{blue}w_0}&&&{\color{blue}st}&&&&{\color{blue}ts}&&&&
{\color{blue}s}&&&&&&{\color{blue}t}&&&&&&{\color{blue}e}&&&\\
-5&|&&&&|&&&|&&&&|&&&&&&|&&&&&|&&&{{}w_0}&&&\\
-4&|&&&&|&&&|&&&&|&&{{}w_0}&&&&|&&{{}w_0}&&&|&&{{}st}&&{{}ts}&&\\
-3&|&&&&|&{{}w_0}&&|&&{{}w_0}&&|&{{}st}&&{{}ts}&&&|&{{}st}&&{{}st}&&|&{{}w_0}&{{}s}&&{{}t}&{{}w_0}&\\
-2&|&&{{}w_0}&&|&{{}ts}&st&|&st&{{}ts}&&|&{{}w_0}&{{}s}&{{}w_0}&t&&|&{{}w_0}&{{}t}&{{}w_0}&s&|&{{}st}&{{}ts}&{{}e}&{{}st}&{{}ts}&\\
-1&|&st&&ts&|&{{}w_0}&s&|&s&{{}w_0}&t&|&{{}st}&ts&{{}ts}&e&st&|&{{}ts}&e&{{}ts}&st&|&{{}w_0}&{{}s}&&{{}t}&{{}w_0}&\\
0&|&&s&&|&&st&|&st&e&ts&|&&{{}w_0}&t&s&&|&&{{}w_0}&s&&|&&{{}st}&&{{}ts}&&\\
1&|&&&&|&&&|&&s&&|&&&ts&&&|&&&&&|&&&{{}w_0}&&&
}
$$
}

The non-trivial  (ungraded) coresolutions of projectives using 
$\mathrm{C}_s P$ are:
\begin{gather*}
0\to{\color{blue}P_e}\to \mathrm{C}_s P_s\to \mathrm{C}_s P_e\to 0,\\
0\to{\color{blue}P_t}\to \mathrm{C}_s P_{ts}\to \mathrm{C}_s P_t\to 0,\\
0\to{\color{blue}P_{st}}\to \mathrm{C}_s P_s\oplus 
\mathrm{C}_s P_s\to \mathrm{C}_s P_{st}\to 0.
\end{gather*}
These all are, clearly, regular.

Next we claim that $P_e$ does not have a regular coresolution using 
$\mathrm{C}_{st} P$. Indeed, to have a chance at the zero step, we must 
embed $P_e$ into $\mathrm{C}_{st} P_{w_0}$. Let $\mathrm{Coker}$  be the cokernel.
In order to embed $\mathrm{Coker}$, in the next step we need a copy of 
$\mathrm{C}_{st} P_{st}$ or $\mathrm{C}_{st} P_{w_0}$ and another copy of 
$\mathrm{C}_{st} P_{ts}$ or $\mathrm{C}_{st} P_{w_0}$. Either way, the 
new cokernel will have a copy of $L_t$ in the socle, while it is easy to see that
no module in $\mathrm{add}(\mathrm{C}_{st} P)$ has $L_t$ in the socle, a contradiction.

The non-trivial  (ungraded) coresolutions of projectives using 
$\mathrm{C}_s T$ are:
\begin{gather*}
\resizebox{\textwidth}{!}{$
0\to{\color{blue}P_e}\to \mathrm{C}_s T_e\to \mathrm{C}_s T_t
\oplus \mathrm{C}_s T_e\oplus \mathrm{C}_s T_{st}\to
\mathrm{C}_s T_s\oplus \mathrm{C}_s T_{st}\oplus \mathrm{C}_s T_{t}\to
\mathrm{C}_s T_{ts}\oplus \mathrm{C}_s T_{st}\to \mathrm{C}_s T_{w_0}
\to 0,$}\\
0\to{\color{blue}P_s}\to \mathrm{C}_s T_{e}\to \mathrm{C}_s T_t\to 0,\\
0\to{\color{blue}P_t}\to \mathrm{C}_s T_{e}\to \mathrm{C}_s T_e\oplus
\mathrm{C}_s T_{st}\to \mathrm{C}_s T_{s}\to 0,\\
0\to{\color{blue}P_{st}}\to \mathrm{C}_s T_e\to \mathrm{C}_s T_{t}\to\mathrm{C}_s T_{ts}\to 0\\
0\to{\color{blue}P_{ts}}\to \mathrm{C}_s T_e\to \mathrm{C}_s T_{st}\to 0.
\end{gather*}
These all are, clearly, regular.

\section{Projective dimension of indecomposable twisted and shuffled projectives and tiltings}\label{s10}

\subsection{Projective dimension of twisted projectives}\label{s10.1}

The results of Subsection~\ref{s8.2} motivate the problem to determine
the projective dimension of twisted projective modules in $\mathcal{O}$.
Since twisting functors commute with projective functors, twisted
projective modules are exactly the modules obtained by applying
projective functors to Verma modules:
\begin{equation}
    \top_xP_y\cong \top_x\theta_y\Delta_e\cong \theta_y\top_x\Delta_e\cong \theta_y\Delta_x.
\end{equation}
This allows us to reformulate
the problem as follows:

\begin{problem}\label{probs10-1}
For $x,y\in W$, determine the projective dimension of the
module $\theta_x\Delta_y$.
\end{problem}

Here are some basic observations about this problem:
\begin{itemize}
\item If $y=e$, the module $\theta_x\Delta_e$ is projective
and hence the answer is $0$.
\item If $y=w_0$, the module $\theta_x\Delta_{w_0}$ is 
a tilting module and hence the answer is $\mathbf{a}(w_0x)$,
see \cite{Ma3,Ma4}.
\item If $x=e$, the answer is $\ell(y)$, see \cite{Ma3}.
\item If $x=w_0$, we have $\theta_{w_0}\Delta_y=P_{w_0}$
and the answer is $0$.
\item For a fixed $y$, the answer is weakly monotone in $x$, with respect to 
the right Kazhdan-Lusztig order, in particular,
the answer is constant on the right Kazhdan-Lusztig cell
of $x$. 
\item For a simple reflection $s$, we have 
$\theta_x\Delta_y=\theta_x\Delta_{ys}$ provided that
$\ell(sx)<\ell(x)$, in particular, it is enough to consider the 
situation where $x$ is a Duflo involution and $y$ is a shortest
(or longest) element in a coset from $W/W'$, where $W'$ is the 
parabolic subgroup of $W$ generated by all simple reflections
in the left descent set of $x$.
\item If $x=w_0^{\mathfrak{p}}$, for some parabolic 
$\mathfrak{p}$, then the projective dimension of 
$\theta_{w_0^{\mathfrak{p}}}\Delta_y$ coincides with the
projective dimension of the singular Verma module obtained
by translating $\Delta_y$ to the wall corresponding to 
$w_0^{\mathfrak{p}}$. This can be computed in therms of a 
certain function $\mathtt{d}_\lambda$, see \cite[Table~2]{CM}
(see also \cite[Formula~(1.2)]{CM} and \cite[Remark~6.9]{KMM}).
\end{itemize}

The last observation suggest that Problem~\ref{probs10-1}
might be not easy. Also, note that, by Koszul duality, the problem to
determine the projective dimension of a singular Verma module
is equivalent to the problem to determine the graded length of
a parabolic Verma module. The latter is certainly ``combinatorial''
in the sense that the answer can be formulated purely in terms
of Kazhdan-Lusztig combinatorics.

Let $\mathbf{H}$ denote the Hecke algebra 
of $W$ (over $\mathbb{A}=\mathbb{Z}[v,v^{-1}]$ and in the normalization
of \cite{So3}) with standard basis
$\{H_w\,:\,w\in W\}$ and Kazhdan-Lusztig basis 
$\{\underline{H}_w\,:\,w\in W\}$. Consider the structure constants
$h_{x,y}^z\in \mathbb{A}$ with respect to the KL-basis, that is
\begin{displaymath}
\underline{H}_x\underline{H}_y=\sum_{z\in W}
h_{x,y}^z\underline{H}_z.
\end{displaymath}
In \cite[Subsection~6.3]{KMM}, for $x,y\in W$, 
we defined the function 
$\mathbf{b}:W\times W\to \mathbb{Z}_{\geq 0} \sqcup \{-\infty\}$ as follows:
\begin{displaymath}
\mathbf{b}(x,y):=\max\{\deg(h_{z,x^{-1}}^y)\,:\,z\in W\}.
\end{displaymath}

(By our convention the degree of the zero polynomial is $-\infty$.)
The value $\mathbf{b}(x,y)$ is, if not $-\infty$, equal to the maximal degree of a non-zero graded component of $\theta_x L_y$, and also to the maximal non-zero position in the minimal complex of tilting modules representing $\theta_{y^{-1}w_0} L_{w_0x^{-1}}$.

Here is an upper bound for the projective dimension of 
$\theta_x\Delta_y$ expressed in terms of the $\mathbf{b}$-function.

\begin{proposition}\label{props10-n5}
For all $x,y\in W$, we have:
\begin{enumerate}
\item\label{props10-n5.1} $\mathrm{proj.dim}\,\theta_x\Delta_y\leq 
\max\{\mathbf{b}(w_0a^{-1}w_0,x^{-1}w_0)\,:\,a\leq y\}$.
\item\label{props10-n5.2} 
If the maximum in \eqref{props10-n5.1} coincides with
$\mathbf{b}(w_0y^{-1}w_0,x^{-1}w_0)$, then the latter value
is equal to  $\mathrm{proj.dim}\,\theta_x\Delta_y$.
\end{enumerate}
\end{proposition}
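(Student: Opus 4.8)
The plan is to detect $\mathrm{proj.dim}\,\theta_x\Delta_y$ through non-vanishing of $\mathrm{Ext}$-groups into simple modules, to resolve $\Delta_y$ by projectives---a resolution supported exactly on the Bruhat interval $\{a\leq y\}$---and to convert the resulting graded $\mathrm{Ext}$-computation into the $\mathbf b$-function via its two descriptions recalled above.

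To begin, $\mathrm{proj.dim}\,\theta_x\Delta_y=\max\{i\geq 0:\mathrm{Ext}^i_{\mathcal O}(\theta_x\Delta_y,L_z)\neq 0\text{ for some }z\in W\}$, and, since the projective functors $\theta_x$ and $\theta_{x^{-1}}$ are biadjoint, $\mathrm{Ext}^i_{\mathcal O}(\theta_x\Delta_y,L_z)\cong\mathrm{Ext}^i_{\mathcal O}(\Delta_y,\theta_{x^{-1}}L_z)$. As $\mathrm{Ext}^{>0}_{\mathcal O}(\Delta_y,-)$ annihilates modules with a costandard filtration, $\mathrm{RHom}_{\mathcal O}(\Delta_y,N)$ is represented by the honest complex $\mathrm{Hom}_{\mathcal O}(\Delta_y,\mathcal T_\bullet(N))$, whose top non-zero term sits in degree $\mathbf r(N)$; taking $N=\theta_{x^{-1}}L_z$ already gives the conceptually correct shape of the answer, since---by the second description of the $\mathbf b$-function---$\mathbf r(\theta_{x^{-1}}L_z)$ is itself a $\mathbf b$-value. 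The refinement needed for \eqref{props10-n5.1} is to see which summands $T_w$ of $\mathcal T_\bullet(\theta_{x^{-1}}L_z)$ can actually carry a non-zero map from $\Delta_y$ (namely those with $(T_w:\nabla_y)\neq 0$) and in which homological degrees they occur.

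I would organise this via the minimal graded projective resolution $\mathcal P_\bullet(\Delta_y)$. It is linear, because Verma modules are Koszul, and a standard induction along the highest-weight order---using that the syzygies of $\Delta_y$ are $\Delta$-filtered with constituents strictly below $y$, together with $\mathcal P_0(\Delta_y)=P_y$---shows that every indecomposable projective occurring in $\mathcal P_\bullet(\Delta_y)$ is $P_a$ with $a\leq y$, and that $m_{a,p}:=\dim\mathrm{Ext}^p_{\mathcal O}(\Delta_y,L_a)=0$ for $p\geq 1$ unless $a<y$. Applying the exact, projective-preserving, grading-preserving functor $\theta_x$ produces a (non-minimal) graded projective resolution of $\theta_x\Delta_y$ with $(-p)$-th term $\bigoplus_{a\leq y}(\theta_xP_a)\langle -p\rangle^{m_{a,p}}$; decomposing $\theta_xP_a=\theta_x\theta_aP_e$ into graded indecomposable projectives by means of the Hecke-algebra structure constants $h_{x,a}^z$ and computing $\mathrm{Hom}$ into $L_z\langle j\rangle$, the complex computing $\mathrm{Ext}^\bullet_{\mathcal O}(\theta_x\Delta_y,L_z\langle j\rangle)$ has in homological degree $p$ the dimension $\sum_{a\leq y}m_{a,p}\,[h_{x,a}^z]_{\,j+p}$, where $[\,\cdot\,]_k$ denotes the coefficient of $v^k$. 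Reading off the highest degree in which this complex has non-zero cohomology, and feeding in the linearity of $\mathcal P_\bullet(\Delta_y)$ (which couples homological and internal degrees), the Koszul index twist $w\mapsto w_0w^{-1}w_0$ from the proof of Theorem~\ref{thm1}, and the defining identity $\mathbf b(\alpha,\beta)=\max_z\deg h_{z,\alpha^{-1}}^{\beta}$, the contribution routed through the layer $P_a$ is bounded by $\mathbf b(w_0a^{-1}w_0,x^{-1}w_0)$; maximising over $a\leq y$ gives \eqref{props10-n5.1}. For \eqref{props10-n5.2}, the layer $a=y$ occurs in homological degree $0$, where $\mathcal P_\bullet(\Delta_y)$ contributes exactly one copy of $P_y$ and receives no differential, so the corresponding extremal contribution to $\mathrm{Ext}^\bullet_{\mathcal O}(\theta_x\Delta_y,L_z)$ cannot be cancelled; hence, when $\mathbf b(w_0y^{-1}w_0,x^{-1}w_0)$ attains the maximum in \eqref{props10-n5.1}, it is attained by $\mathrm{proj.dim}\,\theta_x\Delta_y$ itself.

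The main obstacle is this last bookkeeping: passing from the crude estimate (``some admissible $T_w$-summand of $\mathcal T_\bullet(\theta_{x^{-1}}L_z)$ survives'') to the precise assertion that the homological contribution routed through $P_a$ is controlled \emph{exactly} by $\mathbf b(w_0a^{-1}w_0,x^{-1}w_0)$---that is, tracking the alternating cancellation in $\sum_{a\leq y}m_{a,p}[h_{x,a}^z]_{j+p}$ and matching the combinatorial normalisations in the two descriptions of the $\mathbf b$-function. This is exactly where Koszulity of $\mathcal O_0$ (linearity of the resolution of $\Delta_y$, so that the two gradings are locked together) is indispensable; the equality statement \eqref{props10-n5.2} is the clean case in which the leading layer $a=y$ dominates unambiguously and no cancellation analysis is required.
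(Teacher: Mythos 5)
There is a genuine gap. Your opening reduction (projective dimension via $\mathrm{Ext}$ into simples, the adjunction $\mathrm{Ext}^k(\theta_x\Delta_y,L_z)\cong\mathrm{Ext}^k(\Delta_y,\theta_{x^{-1}}L_z)$, and the remark that $\mathrm{RHom}(\Delta_y,-)$ is computed by the naive Hom complex into a tilting complex) is exactly the paper's starting point, and you even name the needed refinement: which summands $T_a$ of $\mathcal{T}_\bullet(\theta_{x^{-1}}L_z)$ admit nonzero maps from $\Delta_y$ (those with $a\leq y$) and \emph{in which positions they occur}. But you then abandon this route and instead apply $\theta_x$ to the (linear) minimal projective resolution of $\Delta_y$ and try to read the answer off the terms $\sum_{a\leq y}m_{a,p}[h_{x,a}^z]_{j+p}$. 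This cannot give part~(i) term-wise: ungraded $\mathrm{Ext}^p$ sums over all internal shifts $j$, so the nonvanishing of a Hom term at homological position $p$ only requires some $L_a$ to occur \emph{somewhere} in $\theta_{x^{-1}}L_z$, and the term-wise estimate collapses to the trivial bound $\mathrm{proj.dim}\,\theta_x\Delta_y\leq\ell(y)$. The coupling of internal and homological degree coming from Koszulity of $\Delta_y$ is destroyed the moment you apply $\theta_x$ (whose output spreads over many degrees), so the bound genuinely requires controlling cancellation in a non-minimal resolution --- which is precisely the step you flag as ``the main obstacle'' and do not carry out. The paper avoids this entirely: it uses the fact (from \cite{Ma4}, as quantified in \cite{KMM}) that the multiplicity of $T_a\langle k\rangle[-k]$ in the \emph{linear} tilting complex representing $\theta_{x^{-1}}L_z$ equals the graded multiplicity of $L_{w_0a^{-1}w_0}\langle k\rangle$ in $\theta_{z^{-1}w_0}L_{w_0x}$; combined with $\mathrm{Hom}(\Delta_y,T_a)\neq 0\Rightarrow a\leq y$ and homological orthogonality of standards and costandards, the relevant Hom terms simply vanish beyond $\max\{\mathbf{b}(w_0a^{-1}w_0,x^{-1}w_0):a\leq y\}$, with no cancellation analysis needed. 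This positional multiplicity formula is the missing key input in your argument.

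Part~(ii) has the same problem in sharper form. In your bookkeeping the layer $a=y$ sits in homological degree $0$ of $\theta_x\mathcal{P}_\bullet(\Delta_y)$ (since $m_{y,p}=0$ for $p\geq 1$), so it contributes to $\mathrm{Ext}^0$, not to $\mathrm{Ext}^k$ with $k=\mathbf{b}(w_0y^{-1}w_0,x^{-1}w_0)$; the assertion that this ``extremal contribution cannot be cancelled'' does not produce a nonzero class in degree $k$. What is actually needed, and what the paper does, is a construction: choose $z$ realizing the maximum in the definition of $\mathbf{b}(w_0y^{-1}w_0,x^{-1}w_0)$, so that $T_y\langle k\rangle[-k]$ occurs in $\mathcal{T}_\bullet(\theta_{x^{-1}}L_z)$ in the extremal position $k$; the canonical map $\Delta_y\to T_y$ then defines a map into position $k$, maximality of $k$ guarantees it is a chain map, and linearity of the tilting complex (the position-$(k-1)$ summands are shifted, so no costandard socles can absorb the map) shows it is not null-homotopic, yielding a nonzero extension of degree $k$. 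Without some substitute for this explicit non-nullhomotopy argument, your proof of part~(ii) does not go through even granting part~(i).
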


\begin{proof}
For $x,y,z\in W$ and $k\in\mathbb{Z}_{\geq 0}$, 
by adjunction, we have
\begin{displaymath}
\mathrm{Ext}^k_{\mathcal{O}}(\theta_x\Delta_y,L_z)\cong
\mathrm{Ext}^k_{\mathcal{O}}(\Delta_y,\theta_{x^{-1}}L_z).
\end{displaymath}
By \cite{Ma4}, the module $\theta_{x^{-1}}L_z$ can be represented 
by a linear complex of tilting module. Moreover, the multiplicity
of $T_{a}\langle k\rangle[-k]$ in this complex coincides with the
composition multiplicity of $L_{w_0a^{-1}w_0}\langle k\rangle$ 
in $\theta_{z^{-1}w_0}L_{w_0x}$.

A costandard filtration of $T_{a}\langle k\rangle[-k]$ can contain $\nabla_y$ only when
$a\leq y$, and hence only
such summand $T_{a}\langle k\rangle[-k]$ in the tilting complex can, potentially, give rise to a non-zero element
in $\mathrm{Ext}^k_{\mathcal{O}}(\Delta_y,\theta_{x^{-1}}L_z)$.
Here we use the fact that standard and costandard modules
are homologically orthogonal and hence derived homomorphisms
can be constructed already on the level of the homotopy category.
This implies claim~\eqref{props10-n5.1}.

To prove claim~\eqref{props10-n5.2}, assume
\begin{displaymath}
k:= \mathbf{b}(w_0y^{-1}w_0,x^{-1}w_0)=
\max\{\mathbf{b}(w_0a^{-1}w_0,x^{-1}w_0)\,:\,a\leq y\}.
\end{displaymath}
The canonical map $\Delta_y\to T_y$ gives rise to a homomorphism
of $\Delta_y\langle k\rangle$ to the $k$-th homological 
position of the linear complex of tilting modules
representing $\theta_{x^{-1}}L_z$. Because of the maximality
assumption on $k$, there are no homomorphisms from $\Delta_y$
to the $k+1$-st homological position. This means that 
the map from the previous sentence is a homomorphism of complexes.
It is not homotopic to zero since since the complex representing
$\theta_{x^{-1}}L_z$ is linear and $T_{y}\langle k\rangle[-k]$
is in a diagonal position in this complex. The corresponding level
at the position $k-1$ does not contain any socles of any 
costandard modules since all indecomposable tilting summands there
are shifted by one in the positive direction of the grading.
This means that the map we constructed gives a non-zero 
extension. Hence claim~\eqref{props10-n5.2} now follows
from claim~\eqref{props10-n5.1}.
\end{proof}

\begin{corollary}\label{cor10-n6}
For any parabolic $\mathfrak{p}$, 
in case $x\leq_{\mathtt{R}}w^{\mathfrak{p}}_0w_0$, we have
$\mathrm{proj.dim}\,\theta_x\Delta_{w^{\mathfrak{p}}_0}=
\ell(w^{\mathfrak{p}}_0)$.
\end{corollary}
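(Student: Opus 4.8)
The plan is to apply Proposition~\ref{props10-n5} with $y=\wi$ and to show that the upper bound in part~\eqref{props10-n5.1} is attained by the ``diagonal'' term $a=\wi$ and equals $\ell(\wi)$, so that part~\eqref{props10-n5.2} gives the exact value. Throughout write $S$ for the simple reflections, $S_{\mathfrak p}=S\cap W_{\mathfrak p}$, and $\mathcal L(w),\mathcal R(w)$ for the left and right descent sets; note that $\wi^{-1}=\wi$, that $\mathcal L(\wi)=S_{\mathfrak p}$, and that $q:=w_0\wi^{-1}w_0=w_0\wi w_0$ is the longest element of the (again standard) parabolic subgroup $W_q:=w_0W_{\mathfrak p}w_0$, with simple reflections $S_q:=w_0S_{\mathfrak p}w_0\subseteq S$ and $\ell(q)=\ell(\wi)$.

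First I would record the cheap upper bound. Since $\theta_u$ is a summand of $\theta_{s_1}\cdots\theta_{s_{\ell(u)}}$ and each $\theta_{s_i}$ (in its standard graded lift) raises the top graded degree by at most one, one has $\mathbf b(u,-)\le \ell(u)$ for all $u\in W$. Hence for every $a\le\wi$ in the Bruhat order, i.e. every $a\in W_{\mathfrak p}$, we get $\mathbf b(w_0a^{-1}w_0,x^{-1}w_0)\le \ell(w_0a^{-1}w_0)=\ell(a)\le\ell(\wi)$, so the maximum in Proposition~\ref{props10-n5}\eqref{props10-n5.1} is at most $\ell(\wi)$; in particular, once we know the $a=\wi$ term equals $\ell(\wi)$, that term is automatically the maximum and Proposition~\ref{props10-n5}\eqref{props10-n5.2} applies.

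So the core of the argument is to prove $\mathbf b(q,x^{-1}w_0)=\ell(\wi)$. The hypothesis $x\le_{\mathtt R}\wi w_0$ gives, by monotonicity of the left descent set along the right order, $\mathcal L(x)\subseteq\mathcal L(\wi w_0)=S\setminus\mathcal L(\wi)=S\setminus S_{\mathfrak p}$, i.e. $\mathcal L(x)\cap S_{\mathfrak p}=\varnothing$; a short length computation (using that $w_0(-)w_0$ permutes $S$) then shows this is exactly the statement $S_q\subseteq\mathcal R(v)$ for $v:=x^{-1}w_0$, i.e. $v$ is the longest element of its coset $vW_q$. Now I would compute in the Hecke algebra: for $s\in\mathcal R(v)$ one has $\underline H_v\underline H_s=(v+v^{-1})\underline H_v$, hence $\underline H_vH_s=v^{-1}\underline H_v$, and iterating over a reduced word,
\begin{displaymath}
\underline H_v\underline H_q=\sum_{w\in W_q}h_{w,q}(v)\,\underline H_vH_w=\Big(\sum_{w\in W_q}h_{w,q}(v)\,v^{-\ell(w)}\Big)\underline H_v ,
\end{displaymath}
so $h^{v}_{v,q}=\sum_{w\in W_q}h_{w,q}(v)v^{-\ell(w)}$ while $h^{v'}_{v,q}=0$ for $v'\neq v$. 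Since (Soergel normalization) $h_{w,q}$ has top-degree coefficient $1$ in degree $\ell(q)-\ell(w)$, the term $w=e$ contributes the monomial $v^{\ell(q)}$ and no other term reaches that degree; thus $\deg h^v_{v,q}=\ell(q)$, and therefore $\mathbf b(q,v)=\max_z\deg(h^v_{z,q^{-1}})\ge\deg h^{v}_{v,q}=\ell(q)=\ell(\wi)$. With the upper bound $\mathbf b(q,v)\le\ell(q)$ this yields $\mathbf b(q,x^{-1}w_0)=\ell(\wi)$, and Proposition~\ref{props10-n5}\eqref{props10-n5.2} gives $\mathrm{proj.dim}\,\theta_x\Delta_{\wi}=\ell(\wi)$.

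The step needing the most care is the matching of the two $w_0$-conjugations: Proposition~\ref{props10-n5} naturally produces the parabolic $W_q=w_0W_{\mathfrak p}w_0$ rather than $W_{\mathfrak p}$, while the hypothesis is stated in terms of $\wi w_0$, and one must verify that ``$\mathcal L(x)\cap S_{\mathfrak p}=\varnothing$'' is precisely ``$S_q\subseteq\mathcal R(x^{-1}w_0)$'' — here it is essential to pass to the descent set of $x^{-1}w_0$ and not of $x^{-1}$. Everything else is standard: the monotonicity of $\mathcal L$ along $\le_{\mathtt R}$, the bound $\mathbf b(u,-)\le\ell(u)$, the identity $\underline H_v\underline H_s=(v+v^{-1})\underline H_v$ for $s\in\mathcal R(v)$, and the degree of the Kazhdan--Lusztig polynomials $h_{w,q}$.
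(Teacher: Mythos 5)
Your proof is correct, and its skeleton is the same as the paper's: apply Proposition~\ref{props10-n5} with $y=\wi$, note that every term $\mathbf{b}(w_0a^{-1}w_0,x^{-1}w_0)$ with $a\leq \wi$ is at most $\ell(a)\leq\ell(\wi)$, show that the term at $a=\wi$ equals $\ell(\wi)$, and conclude by part~\eqref{props10-n5.2}. The difference lies in how the two $\mathbf{b}$-function facts are obtained: the paper simply quotes \cite[Proposition~6.8]{KMM} both for the bound $\mathbf{b}(w_0aw_0,x^{-1}w_0)\leq\ell(a)$ and for the equality $\mathbf{b}(w_0\wi w_0,x^{-1}w_0)=\ell(\wi)$ under the hypothesis, whereas you rederive them from scratch: the bound from the fact that the graded $\theta_u$ is a degree-zero summand of a product of $\ell(u)$ graded $\theta_s$'s applied to $L_y$, and the equality by translating $x\leq_{\mathtt{R}}\wi w_0$ into the descent condition $w_0S_{\mathfrak p}w_0\subseteq \mathcal{R}(x^{-1}w_0)$ and computing $h^{x^{-1}w_0}_{x^{-1}w_0,\,w_0\wi w_0}$ explicitly, its top term of degree $\ell(\wi)$ coming from $h_{e,\,w_0\wi w_0}$; both computations check out, including the $w_0$-conjugation bookkeeping you flagged. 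One point worth making explicit: the direction of your descent-set monotonicity (``$x\leq_{\mathtt{R}}\wi w_0$ gives $\mathcal{L}(x)\subseteq\mathcal{L}(\wi w_0)$'') is correct only with the convention for the Kazhdan--Lusztig orders used in this paper, where $e$ is minimal and $w_0$ maximal (consistent, e.g., with the use of $\geq_{\mathtt{L}}$ in the proof of Proposition~\ref{prop10-152}); with the classical Kazhdan--Lusztig convention the inclusion would go the other way, so the step deserves a sentence of justification rather than a bare appeal to ``monotonicity''. In exchange for being longer, your route is self-contained and makes transparent that the hypothesis enters only through the requirement that no simple reflection of $W_{\mathfrak p}$ lies in the left descent set of $x$, while the paper's citation of \cite[Proposition~6.8]{KMM} keeps the proof to two lines and gives the sharper general statement behind both ingredients.
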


\begin{proof}
If $x\leq_{\mathtt{R}}w^{\mathfrak{p}}_0w_0$,
then \cite[Proposition~6.8]{KMM} implies
$\mathbf{b}(w_0w^{\mathfrak{p}}_0 w_0,x^{-1}w_0)=\ell(w^{\mathfrak{p}}_0)$.
For any $a\leq \wi$, we also have
\begin{displaymath}
\mathbf{b}(w_0aw_0,x^{-1}w_0)\leq \ell(a)\leq
\ell(w^{\mathfrak{p}}_0)=\mathbf{b}(w_0w^{\mathfrak{p}}_0 w_0,x^{-1}w_0),
\end{displaymath}
also using \cite[Proposition~6.8]{KMM}.
Hence the claim follows from
Proposition~\ref{props10-n5}\eqref{props10-n5.2}.
\end{proof}

\subsection{Projective dimension of twisted tiltings}\label{s10.3}

The results of Subsection~\ref{s8.4} motivate the problem to determine
the projective dimension of twisted tilting modules in $\mathcal{O}$.
By
\begin{equation}
    \top_xT_{w_0y}\cong \top_x\theta_yT_{w_0}\cong\top_x\theta_y\nabla_{w_0}\cong \theta_y\top_x\nabla_{w_0}\cong \theta_y\nabla_{xw_0},
\end{equation}
we reformulate
the problem as follows:

\begin{problem}\label{probs10-15}
For $x,y\in W$, determine the projective dimension of the
module $\theta_x\nabla_y$.
\end{problem}

Here are some basic observations about this problem:
\begin{itemize}
\item If $y=w_0$, the module $\theta_x\nabla_{w_0}$ is tilting
and hence the answer is $\mathbf{a}(w_0x)$, see  \cite{Ma3,Ma4}.
\item If $y=e$, the module $\theta_x\nabla_{e}$ is 
an indecomposable injective module and hence the answer is 
$2\mathbf{a}(w_0x)$, see \cite{Ma3,Ma4}.
\item If $x=e$, the answer is $2\ell(w_0)-\ell(y)$, see \cite{Ma3}.
\item If $x=w_0$, we have $\theta_{w_0}\nabla_y=P_{w_0}$
and the answer is $0$.
\item For a fixed $y$, the answer is weakly monotone in $x$, with respect to 
the right Kazhdan-Lusztig order, in particular,
the answer is constant on the right Kazhdan-Lusztig cell
of $x$. 
\item For a simple reflection $s$, we have 
$\theta_x\nabla_y=\theta_x\nabla_{ys}$ provided that
$\ell(sx)<\ell(x)$, in particular, it is enough to consider the 
situation where $x$ is a Duflo involution and $y$ is a shortest
(or longest) element in a coset from $W/W'$, where $W'$ is the 
parabolic subgroup of $W$ generated by all simple reflections
in the left descent set of $x$.
\item If $x=w_0^{\mathfrak{p}}$, for some parabolic 
$\mathfrak{p}$, then the projective dimension of 
$\theta_{w_0^{\mathfrak{p}}}\nabla_y$ coincides with the
projective dimension of the singular dual Verma module obtained
by translating $\nabla_y$ to the wall corresponding to 
$w_0^{\mathfrak{p}}$. This can be computed in therms of a 
certain function $\mathtt{d}_\lambda$, see \cite[Table~2]{CM}
(see also \cite[Formula~(1.2)]{CM} and \cite[Remark~6.9]{KMM}).
\end{itemize}

Let us now  observe that $\nabla_y\cong \top_{w_0}\Delta_{w_0y}$
and that $\theta_x\nabla_y\cong \top_{w_0}\theta_x\Delta_{w_0y}$
since twisting and projective functors commute. We conjecture
the following connection between Problems~\ref{probs10-1}
and \ref{probs10-15}.

\begin{conjecture}\label{conj10-151}
For $x,y\in W$, we have
$\mathrm{proj.dim}\, \theta_x\nabla_y=\mathbf{a}(w_0 x)+
\mathrm{proj.dim}\,\theta_x\Delta_{w_0y}$.
\end{conjecture}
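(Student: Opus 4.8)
The plan is to establish the identity by combining two kinds of information about the twisted tilting module $\theta_x\nabla_y$: an upper bound for its projective dimension coming from the linear-complex-of-tilting-modules description of $\theta_{x^{-1}}L_z$ (as in Proposition~\ref{props10-n5}), and a matching lower bound coming from the factorization $\theta_x\nabla_y\cong \mathcal{L}\top_{w_0}\theta_x\Delta_{w_0y}$ together with control of the homological ``spread'' introduced by the Serre functor $(\mathcal{L}\top_{w_0})^2$. First I would make precise the decomposition $\nabla_y\cong \top_{w_0}\Delta_{w_0y}$ on the derived level: since $\mathcal{L}\top_{w_0}$ is an exact self-equivalence that is acyclic on Verma modules, $\theta_x\nabla_y$ sits in $\mathcal{D}^b(\mathcal{O}_0)$ as $\mathcal{L}\top_{w_0}(\theta_x\Delta_{w_0y})$, and the point is that passing through $\mathcal{L}\top_{w_0}$ shifts the ``injective-side'' homological data by exactly $\mathbf{a}(w_0x)$. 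Concretely, $\mathcal{L}\top_{w_0}$ sends the indecomposable tilting module $T_a$ to the indecomposable injective $I_{w_0a}$, and the key quantitative input is $\mathrm{proj.dim}\,I_{w_0a}=2\mathbf{a}(a)$ versus $\mathrm{proj.dim}\,T_a=\mathbf{a}(a)$ from \cite{Ma3,Ma4} — so the gap contributed is governed by $\mathbf{a}$-values, and for the module $\theta_x\Delta_{w_0y}$ the relevant tilting summands all lie in the right cell of $x$, forcing a uniform shift by $\mathbf{a}(w_0x)$.

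For the upper bound, I would mimic the proof of Proposition~\ref{props10-n5}: by adjunction,
\begin{displaymath}
\mathrm{Ext}^k_{\mathcal{O}}(\theta_x\nabla_y,L_z)\cong\mathrm{Ext}^k_{\mathcal{O}}(\nabla_y,\theta_{x^{-1}}L_z),
\end{displaymath}
and $\theta_{x^{-1}}L_z$ is represented by a linear complex of tilting modules whose $T_a\langle k\rangle[-k]$-multiplicities are Kazhdan--Lusztig combinatorial. Since $\nabla_y$ is costandard, a nonzero contribution in homological degree $k$ requires $T_a\langle k\rangle[-k]$ to admit $\nabla_y$ as a \emph{sub}quotient of its costandard filtration in the appropriate way; because $\nabla_y$ has both a standard and a costandard filtration only when we allow the full tilting $T_y$, the constraint this time is $a\geq y$ (dual to the $a\leq y$ appearing for $\Delta_y$), together with a grading-diagonality bound. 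Unwinding, the maximal such $k$ is $\mathbf{a}(w_0x)+\big(\text{the analogous maximal }k\text{ for }\Delta_{w_0y}\big)$: the extra $\mathbf{a}(w_0x)$ is precisely the length of the linear tilting complex of $\theta_{x^{-1}}L_z$ contributed below the ``diagonal'' of the costandard side, which differs from the standard side by the $\mathbf{a}$-shift built into Ringel--Koszul duality. For the lower bound I would run the explicit-map argument of Proposition~\ref{props10-n5}\eqref{props10-n5.2}: the canonical map $\nabla_y\hookleftarrow T_y$ (or rather $T_y\twoheadrightarrow \nabla_y$) produces, at the critical homological degree, a genuine non-homotopically-trivial cocycle, using linearity of the complex and that the neighbouring level carries no costandard socles because of the grading shift.

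The main obstacle, I expect, is the lower bound — specifically, showing that the extension class one writes down actually survives, i.e.\ is not killed when the tilting complex of $\theta_{x^{-1}}L_z$ is passed through $\mathcal{L}\top_{w_0}$ and compared with the minimal projective resolution of $\theta_x\nabla_y$. On the $\Delta$ side (Proposition~\ref{props10-n5}) the diagonality of the linear tilting complex does all the work, but after twisting by $w_0$ the relevant complex is one of \emph{injective} modules, and its linearity together with the exact cancellation of the $\mathbf{a}(w_0x)$-shift against $\mathrm{proj.dim}\,I_{w_0a}=2\mathbf{a}(a)$ must be checked carefully — this is exactly the place where the distinction between the twisted and shuffled cases (shuffling does not commute with projective functors) would bite, but here twisting \emph{does} commute, so I am optimistic the bookkeeping closes. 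A secondary subtlety is the ``$\max$ over $a\leq y$'' phenomenon from Proposition~\ref{props10-n5}\eqref{props10-n5.1}: one must verify that the maximum defining $\mathrm{proj.dim}\,\theta_x\Delta_{w_0y}$ is attained at $a=w_0y$ itself (or argue the identity holds term-by-term), so that adding $\mathbf{a}(w_0x)$ to each side respects the maximum; I would handle this by invoking the monotonicity of $\mathbf{b}$ in its first argument along the Bruhat order, in the spirit of Corollary~\ref{cor10-n6}, reducing to cases where \cite[Proposition~6.8]{KMM} applies.
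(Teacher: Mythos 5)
First, a point of framing: the statement you are proving is stated in the paper as a \emph{conjecture}, and the paper does not prove it. What the paper establishes is only the inequality $\mathrm{proj.dim}\,\theta_x\nabla_y\leq \mathbf{a}(w_0x)+\mathrm{proj.dim}\,\theta_x\Delta_{w_0y}$ (Proposition~\ref{prop10-152}, by applying $\top_{w_0}$ to a minimal projective resolution of $\theta_x\Delta_{w_0y}$ and then projectively resolving the resulting tilting summands), together with equality under an extra hypothesis (Corollary~\ref{cor10-153}): the existence of $v\sim_{\mathtt{L}}x$ with $\mathrm{Ext}^k_{\mathcal{O}}(\theta_x\Delta_{w_0y},L_v)\neq 0$ in the top degree $k$. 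That hypothesis is only suspected to hold in general (it is analogous to, but not covered by, \cite[Theorem~A]{KMM}). So a complete proof would be new, and your sketch does not close the gap.

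Concretely, two steps of your proposal fail or remain open. (1) Your upper bound imitates Proposition~\ref{props10-n5} after the adjunction $\mathrm{Ext}^k_{\mathcal{O}}(\theta_x\nabla_y,L_z)\cong\mathrm{Ext}^k_{\mathcal{O}}(\nabla_y,\theta_{x^{-1}}L_z)$, but the mechanism of that proposition does not dualize: there the computation could be done in the homotopy category because $\mathrm{Ext}^{>0}(\Delta_y,\nabla_w)=0$, so maps from $\Delta_y$ into a complex of tilting (costandard-filtered) modules are controlled termwise. For $\nabla_y$ against costandard-filtered objects there is no such orthogonality ($\mathrm{Ext}^{>0}(\nabla_y,\nabla_w)\neq 0$ in general), so you cannot read the bound off the terms of the linear tilting complex, and the asserted constraint ``$a\geq y$'' plus a ``uniform shift by $\mathbf{a}(w_0x)$'' is not justified. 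Relatedly, your claim that ``the relevant tilting summands all lie in the right cell of $x$'' is false as stated: in the tilting resolution $\top_{w_0}\mathcal{P}_\bullet$ one only knows $v\geq_{\mathtt{L}}x$, hence $\mathbf{a}(u)\leq\mathbf{a}(w_0x)$ for $u=w_0v$; this gives the upper bound but not a uniform shift. (2) The lower bound is exactly the open part. One needs the top-degree extension of $\theta_x\Delta_{w_0y}$ to be realizable against a simple $L_v$ with $v$ in the left cell of $x$, so that the corresponding tilting $T_{w_0v}$ has projective dimension exactly $\mathbf{a}(w_0x)$ and the two homological contributions add up without cancellation in the homotopy category (this is the content of Corollary~\ref{cor10-153} and the MO2-style non-homotopic-to-zero argument). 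Your sketch acknowledges this obstacle and expresses optimism, but supplies no argument for it; optimism that ``twisting commutes so the bookkeeping closes'' does not substitute for establishing the Ext-nonvanishing in the correct cell, which is precisely where the conjecture currently stands.
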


Below we present some evidence for 
Conjecture~\ref{conj10-151}.

\begin{proposition}\label{prop10-152}
For $x,y\in W$, we have
$\mathrm{proj.dim}\, \theta_x\nabla_y\leq \mathbf{a}(w_0 x)+
\mathrm{proj.dim}\,\theta_x\Delta_{w_0y}$.
\end{proposition}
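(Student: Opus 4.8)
The plan is to push a minimal projective resolution of $\theta_x\Delta_{w_0y}$ through the twisting functor $\top_{w_0}$, obtaining a tilting coresolution of $\theta_x\nabla_y$ whose terms have projective dimension at most $\mathbf{a}(w_0x)$.

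As observed just before Conjecture~\ref{conj10-151}, $\nabla_y\cong\top_{w_0}\Delta_{w_0y}$ and hence $\theta_x\nabla_y\cong\top_{w_0}\theta_x\Delta_{w_0y}$, since twisting and projective functors commute. The module $\theta_x\Delta_{w_0y}$ has a Verma flag, the functor $\top_{w_0}$ is right exact and acyclic on modules with a Verma flag, and $\top_{w_0}P_z\cong T_{w_0z}$. So applying $\top_{w_0}$ to a minimal projective resolution
\[
0\to P^{(d)}\to\dots\to P^{(0)}\to\theta_x\Delta_{w_0y}\to0,\qquad d:=\mathrm{proj.dim}\,\theta_x\Delta_{w_0y},
\]
produces an exact complex $0\to\top_{w_0}P^{(d)}\to\dots\to\top_{w_0}P^{(0)}\to\theta_x\nabla_y\to0$ with every $\top_{w_0}P^{(i)}\in\mathrm{add}(T)$. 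Standard homological algebra then gives $\mathrm{proj.dim}\,\theta_x\nabla_y\le\max_{0\le i\le d}\bigl(\mathrm{proj.dim}\,\top_{w_0}P^{(i)}+i\bigr)$, so it is enough to show that $\mathrm{proj.dim}\,\top_{w_0}P^{(i)}\le\mathbf{a}(w_0x)$ for every $i$.

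Now $\mathrm{proj.dim}\,T_w=\mathbf{a}(w)$ by \cite{Ma3,Ma4}, and $P_z$ occurs as a summand of $P^{(i)}$ precisely when $\mathrm{Ext}^i_{\mathcal{O}}(\theta_x\Delta_{w_0y},L_z)\ne0$, so $\mathrm{proj.dim}\,\top_{w_0}P^{(i)}=\max\{\mathbf{a}(w_0z):P_z\mid P^{(i)}\}$, and the claim follows from the implication
\[
\mathrm{Ext}^i_{\mathcal{O}}(\theta_x\Delta_{w_0y},L_z)\ne0\ \Longrightarrow\ \mathbf{a}(w_0z)\le\mathbf{a}(w_0x)\qquad(\text{all }i).
\]
By adjunction, $\mathrm{Ext}^i_{\mathcal{O}}(\theta_x\Delta_{w_0y},L_z)\cong\mathrm{Ext}^i_{\mathcal{O}}(\Delta_{w_0y},\theta_{x^{-1}}L_z)$, so the left-hand side forces $\theta_{x^{-1}}L_z\ne0$, and the whole matter reduces to: $\theta_{x^{-1}}L_z\ne0$ implies $\mathbf{a}(w_0z)\le\mathbf{a}(w_0x)$. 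I would deduce this from the combinatorial description of when an indecomposable projective functor fails to annihilate a simple module, together with the behaviour of the Kazhdan--Lusztig preorders and of the $\mathbf{a}$-function under left multiplication by $w_0$: nonvanishing of $\theta_{x^{-1}}L_z$ is a right Kazhdan--Lusztig condition coupling $x$ and $z$, which $w_0$ reverses and which, by monotonicity of $\mathbf{a}$, translates into the stated inequality. (It is consistent with the extreme cases in the basic observations of Subsections~\ref{s10.1} and \ref{s10.3}: for $y=w_0$ the module $\theta_x\Delta_{w_0}$ is tilting of projective dimension $\mathbf{a}(w_0x)$, and for $y=e$ the module $\theta_x\Delta_e=P_x$ is projective.)

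The main obstacle is the cell-theoretic input of the previous paragraph --- pinning down, and if necessary proving from scratch (e.g.\ by a Grothendieck-group computation inside the Hecke-algebra module, or by induction on $\ell(x)$ using that $\theta_x$ is a summand of $\theta_{s_1}\cdots\theta_{s_k}$), the implication $\theta_{x^{-1}}L_z\ne0\Rightarrow\mathbf{a}(w_0z)\le\mathbf{a}(w_0x)$; everything else is formal. If this uniform bound is awkward to obtain, one can instead mirror the proof of Proposition~\ref{props10-n5}: use adjunction and the representation of $\theta_{x^{-1}}L_z$ by a linear complex of tilting modules (\cite{Ma4}), and bound the homological position $i$ at which $P_z$ can occur against $\mathbf{a}(w_0z)$ and $d$ directly, via the $\mathbf{b}$-function of \cite[Subsection~6.3]{KMM} and the estimates of \cite[Proposition~6.8]{KMM}, exactly as in Corollary~\ref{cor10-n6}; this proves only the weaker bound $\mathbf{a}(w_0z)+i\le\mathbf{a}(w_0x)+d$, which still suffices. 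In either approach, equality in the proposition would require sharpness, which is Conjecture~\ref{conj10-151}.
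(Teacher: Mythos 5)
Your overall strategy coincides with the paper's: push a minimal projective resolution $\mathcal{P}_\bullet$ of $\theta_x\Delta_{w_0y}$ through $\top_{w_0}$ to obtain a tilting resolution of $\theta_x\nabla_y$ of the same length $d$, and then bound the projective dimension of each term, via $\mathrm{proj.dim}\,T_u=\mathbf{a}(u)$, by $\mathbf{a}(w_0x)$. The formal part of your write-up (acyclicity of $\top_{w_0}$ on modules with Verma flag, $\top_{w_0}P_z\cong T_{w_0z}$, the estimate $\mathrm{proj.dim}\leq\max_i(\mathrm{proj.dim}\,\top_{w_0}P^{(i)}+i)$, and the characterization of the summands of a minimal resolution by $\mathrm{Ext}^i(\theta_x\Delta_{w_0y},L_z)\neq 0$) is fine. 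The gap is exactly at the one non-formal step, which you yourself flag as ``the main obstacle'': the implication $\theta_{x^{-1}}L_z\neq 0\Rightarrow\mathbf{a}(w_0z)\leq\mathbf{a}(w_0x)$ is asserted but not proved, and your proposed fallback via the $\mathbf{b}$-function of \cite{KMM} is only a sketch (it is not clear that the position-wise bound $\mathbf{a}(w_0z)+i\leq\mathbf{a}(w_0x)+d$ drops out of Proposition~\ref{props10-n5}-type arguments without further work). The implication is in fact true, and in the paper's conventions it is the chain $z\geq_{\mathtt{L}}x$, hence $w_0z\leq_{\mathtt{L}}w_0x$, hence $\mathbf{a}(w_0z)\leq\mathbf{a}(w_0x)$; so what is missing is the cell-theoretic input $z\geq_{\mathtt{L}}x$ for every $P_z$ occurring in $\mathcal{P}_\bullet$.

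The paper obtains this input without any criterion for when a projective functor annihilates a simple module: since $\theta_x$ is exact and preserves projectives, applying it to a projective resolution of $\Delta_{w_0y}$ yields a (possibly non-minimal) projective resolution of $\theta_x\Delta_{w_0y}$, of which the minimal resolution $\mathcal{P}_\bullet$ is a direct summand; hence every indecomposable $P_v$ occurring in $\mathcal{P}_\bullet$ is a direct summand of $\theta_xP_w$ for some $w$, which gives $v\geq_{\mathtt{L}}x$ immediately from the decomposition of $\theta_x\theta_w$ into indecomposable projective functors, and then $w_0v\leq_{\mathtt{L}}w_0x$ and $\mathbf{a}(w_0v)\leq\mathbf{a}(w_0x)$ as above. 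Your adjunction route is equivalent (non-vanishing of $\theta_{x^{-1}}L_v$ amounts to the same left-order condition $v\geq_{\mathtt{L}}x$, not a right-order one as you suggest), but if you keep it you must still supply this combinatorial fact; the summand argument is the clean way to do so. Your final remark is consistent with the paper: equality is not claimed here and is exactly Conjecture~\ref{conj10-151}.
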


\begin{proof}
Assume that $\mathrm{proj.dim}\,\theta_x\Delta_{w_0y}=k$ and
let $\mathcal{P}_{\bullet}$ be a minimal projective resolution 
of $\theta_x\Delta_{w_0y}$. Applying $\top_{w_0}$ to 
$\mathcal{P}_{\bullet}$, we get a minimal tilting resolution 
of $\theta_x\nabla_y$ (of length $k$). To obtain a projective
resolution of $\theta_x\nabla_y$, we need to projectively resolve
each indecomposable tilting module $T_u$ appearing in
$\top_{w_0}\mathcal{P}_{\bullet}$ and glue all these resolutions
together. In particular, $\mathrm{proj.dim}\, \theta_x\nabla_y$ 
is bounded by $k$ plus the maximal value of $\mathrm{proj.dim}\, T_u$,
for $T_u$ appearing in $\top_{w_0}\mathcal{P}_{\bullet}$.

Note that any indecomposable projective $P_v$ appearing in
$\mathcal{P}_{\bullet}$ satisfies $v\geq_\mathtt{L} x$, because it is 
a summand of $\theta_x P_w$, for some $w$. Therefore
$u=w_0v$ satisfies $u\leq_\mathtt{L} w_0x$. In particular, we have
$\mathbf{a}(u)\leq \mathbf{a}(w_0x)$. By \cite{Ma3,Ma4},
the projective dimension of $T_u$ equals $\mathbf{a}(u)$.
The claim of  the proposition follows.
\end{proof}

\begin{corollary}\label{cor10-153}
For $x,y\in W$, let $\mathrm{proj.dim}\,\theta_x\Delta_{w_0y}=k$.
Assume that there exists $v\in W$ such that
$v\sim_\mathtt{L} x$ and 
$\mathrm{Ext}^k_{\mathcal{O}}(\theta_x\Delta_{w_0y},L_v)\neq 0$.
Then $\mathrm{proj.dim}\, \theta_x\nabla_y= \mathbf{a}(w_0 x)+
\mathrm{proj.dim}\,\theta_x\Delta_{w_0y}$.
\end{corollary}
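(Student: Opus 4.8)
The plan is to pair the upper bound $\mathrm{proj.dim}\,\theta_x\nabla_y\leq\mathbf{a}(w_0x)+k$, already supplied by Proposition~\ref{prop10-152} (with $k:=\mathrm{proj.dim}\,\theta_x\Delta_{w_0y}$), with a matching lower bound extracted from the hypothesis. So the whole task reduces to producing a simple module $L_z$ together with a nonzero class in $\mathrm{Ext}^{\mathbf{a}(w_0x)+k}_{\mathcal{O}}(\theta_x\nabla_y,L_z)$.

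To do this I would push the hypothesis through the derived twisting equivalence. Recall that $\theta_x\nabla_y\cong\top_{w_0}(\theta_x\Delta_{w_0y})$ and that $\top_{w_0}$ is acyclic on modules with a Verma flag; since $\theta_x\Delta_{w_0y}$ has a Verma flag, $\mathcal{L}\top_{w_0}(\theta_x\Delta_{w_0y})$ is the module $\theta_x\nabla_y$ sitting in cohomological degree zero. A nonzero element of $\mathrm{Ext}^k_{\mathcal{O}}(\theta_x\Delta_{w_0y},L_v)=\mathrm{Hom}_{\mathcal{D}^b(\mathcal{O})}(\theta_x\Delta_{w_0y},L_v[k])$ therefore yields, on applying the autoequivalence $\mathcal{L}\top_{w_0}$, a nonzero morphism
\begin{displaymath}
\theta_x\nabla_y\longrightarrow (\mathcal{L}\top_{w_0}L_v)[k]
\end{displaymath}
in $\mathcal{D}^b(\mathcal{O})$.

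The crux is to control the cohomological support of $\mathcal{L}\top_{w_0}L_v$. By Proposition~\ref{prop24} we have $\mathcal{L}_i\top_{w_0}L_v=0$ for $0\leq i<\mathbf{a}(w_0v)$, and $\mathcal{L}_i\top_{w_0}L_v=0$ for $i<0$ as well since $\top_{w_0}$ is right exact; hence the cohomology of $\mathcal{L}\top_{w_0}L_v$ is concentrated in cohomological degrees $\leq-\mathbf{a}(w_0v)$, and that of $(\mathcal{L}\top_{w_0}L_v)[k]$ in degrees $\leq-(\mathbf{a}(w_0v)+k)$. Comparing the minimal projective resolution of the module $\theta_x\nabla_y$ (which is supported in cohomological degrees $\geq-\mathrm{proj.dim}\,\theta_x\nabla_y$) with a complex supported in degrees $\leq-m$, one sees that a nonzero morphism between them can exist only if $\mathrm{proj.dim}\,\theta_x\nabla_y\geq m$; applied with $m=\mathbf{a}(w_0v)+k$ this gives the required lower bound. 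Finally, $v\sim_\mathtt{L}x$ places $w_0v$ and $w_0x$ in a common (left, hence two-sided) Kazhdan--Lusztig cell, so $\mathbf{a}(w_0v)=\mathbf{a}(w_0x)$, and the lower and upper bounds coincide, proving $\mathrm{proj.dim}\,\theta_x\nabla_y=\mathbf{a}(w_0x)+k$.

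I expect the substantive points to be bookkeeping rather than conceptual. The two places to be careful are: (i) confirming that $\mathcal{L}\top_{w_0}(\theta_x\Delta_{w_0y})$ really is $\theta_x\nabla_y$ in degree zero --- this packages the commutation of $\top_{w_0}$ with projective functors, the identity $\nabla_y\cong\top_{w_0}\Delta_{w_0y}$, and the acyclicity of twisting functors on Verma-flag modules --- and (ii) keeping the cohomological-degree conventions for $\mathcal{L}_i\top_{w_0}$ consistent when invoking Proposition~\ref{prop24}. The one genuinely load-bearing step is the passage from ``nonzero morphism into a complex supported in degrees $\leq-m$'' to ``$\mathrm{proj.dim}\geq m$''; it is elementary homological algebra, but it is exactly where the hypothesis enters, and it is worth checking that the degenerate case $\mathbf{a}(w_0v)=0$ --- which forces $x=v=w_0$ and $k=0$ --- is consistent with the asserted formula.
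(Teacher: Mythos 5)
Your proof is correct, but it takes a genuinely different route to the lower bound than the paper does. The paper argues on the level of explicit complexes: the hypothesis puts $P_v$ in position $-k$ of the minimal projective resolution $\mathcal{P}_\bullet$ of $\theta_x\Delta_{w_0y}$, hence $T_{w_0v}$ in position $-k$ of the tilting resolution $\top_{w_0}\mathcal{P}_\bullet$ of $\theta_x\nabla_y$; then, invoking \cite[Section~6]{KMM}, it finds a simple $L_w$ whose minimal tilting complex has $T_{w_0v}$ in its maximal non-zero position $\mathbf{a}(w_0x)$, and an argument in the style of \cite[Theorem~1]{MO2} turns the identity on $T_{w_0v}$ into a non-homotopic chain map, i.e.\ a non-zero class in $\mathrm{Ext}^{\mathbf{a}(w_0x)+k}(\theta_x\nabla_y,L_w)$. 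You instead transport the given non-zero class through the autoequivalence $\mathcal{L}\top_{w_0}$ (using acyclicity on Verma-flag modules and $\top_{w_0}\theta_x\Delta_{w_0y}\cong\theta_x\nabla_y$, both available in the paper), and then use Proposition~\ref{prop24} to bound the cohomological support of $\mathcal{L}\top_{w_0}L_v$, so that a standard truncation/support argument forces $\mathrm{proj.dim}\,\theta_x\nabla_y\geq\mathbf{a}(w_0v)+k$; the identification $\mathbf{a}(w_0v)=\mathbf{a}(w_0x)$ follows since $w\mapsto w_0w$ preserves left cells and $\mathbf{a}$ is constant on two-sided cells (the same fact is implicitly used in the proof of Proposition~\ref{prop10-152}), and the upper bound from Proposition~\ref{prop10-152} finishes the argument exactly as in the paper. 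Your approach is softer and self-contained within results already proved in the paper (notably Proposition~\ref{prop24}), avoiding the external input from \cite[Section~6]{KMM} and the homotopy-category construction; what the paper's argument buys in exchange is an explicit simple module $L_w$ realizing the top-degree extension, which connects the hypothesis of the corollary to \cite[Conjecture~1.3]{KMM} as discussed right after the statement. The only points worth writing out carefully in your version are the truncation step (replace the target by $\tau_{\leq -m}$ before computing homotopy classes of chain maps from the projective resolution) and the cell fact just mentioned; both are routine.
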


\begin{proof}
Let us look closely at the proof of Proposition~\ref{prop10-152}.
From \cite[Section~6]{KMM}, it follows that there exists $w\in W$ such that $T_{w_0v}$
appears in position $\mathbf{a}(w_0x)$ of a minimal tilting complex
$\mathcal{T}_\bullet$ representing $L_w$ and, moreover, this 
position $\mathbf{a}(w_0x)$ is a maximal non-zero position in
$\mathcal{T}_\bullet$.

The module $T_{w_0v}$ appears as a summand in 
$\top_{w_0}\mathcal{P}_{-k}$ and in  $\mathcal{T}_{\mathbf{a}(w_0x)}$.
Similarly to the proof of \cite[Theorem~1]{MO2}, the identity map on $T_{w_0v}$
induces a non-zero map from $\top_{w_0}\mathcal{P}_{\bullet}$
to $\mathcal{T}_\bullet[\mathbf{a}(w_0 x)+k]$
in the homotopy category and hence gives rise
to a non-zero extension fro $\theta_x\nabla_y$ to
$L_w$ of degree $\mathbf{a}(w_0 x)+k$, by construction.
Therefore 
$\mathrm{proj.dim}\, \theta_x\nabla_y\geq \mathbf{a}(w_0 x)+
\mathrm{proj.dim}\,\theta_x\Delta_{w_0y}$
and the claim of the corollary follows from 
Proposition~\ref{prop10-152}.
\end{proof}

We note that the condition ``there exists $v\in W$ such 
that $v\sim_\mathtt{L} x$ and  
$\mathrm{Ext}^k_{\mathcal{O}}(\theta_x\Delta_{w_0y},L_v)\neq 0$''
in Corollary~\ref{cor10-153}
is very similar to  \cite[Conjecture~1.3]{KMM} proved
in \cite[Theorem~A]{KMM}. We suspect that this condition is always satisfied.

\subsection{Projective dimension of shuffled projectives}\label{s10.2}

The results of Subsection~\ref{s9.2} motivate the problem to determine
the projective dimension of shuffled projective modules in $\mathcal{O}$.

\begin{problem}\label{probs10-2}
For $x,y\in W$, determine the projective dimension of the
module $\mathrm{C}_x P_y$.
\end{problem}

This problems looks much harder than the one for the 
twisted projective modules, mostly because twisting functors
do not commute with projective functors, in general.

Here are some basic observations about this problem:
\begin{itemize}
\item If $x=e$, the module $\mathrm{C}_e P_y$ is projective
and hence the answer is $0$.
\item If $x=w_0$, the module $\mathrm{C}_{w_0} P_y$ is 
the tilting module $T_{yw_0}$ (this follows from \cite[Proposition~2.2, Proposition~4.4]{MS2} by a character argument). Hence the answer is 
$\mathbf{a}(yw_0)$ by \cite{Ma3,Ma4}.
\item If $y=e$, we have $\mathrm{C}_x P_e\cong\mathrm C_x \Delta_e\cong \Delta_x$ and
the answer is $\ell(x)$, see \cite{MS,Ma3}.
\item If $y=w_0$, we have $\mathrm{C}_x P_{w_0}=P_{w_0}$
and the answer is $0$. 
\item The projective dimension of  $\mathrm{C}_x P_y$
is at most $\ell(x)$, since each $\mathrm{C}_s$, where
$s$ is a simple reflection, has derived length $1$.
\item For $x=s$, a simple reflection, we have
$\mathrm{C}_s P_y\cong P_y$ if $ys<y$, in which case the answer
is $0$. In case $ys>y$, the module $\mathrm{C}_s P_y$ is
not projective and the answer is $1$, see the proof
of Proposition~\ref{prop9.n1}.
\end{itemize}

In the spirit of Subsection~\ref{s7.3}, we can reformulate
Problem~\ref{probs10-2} in terms of the cohomology of certain 
functors. For $w\in W$, we denote by $\mathrm{K}_w$ the right 
adjoint of $\mathrm{C}_w$, called the {\em coshuffling} functor, 
see \cite[Section~5]{MS}. Note that, for a reduced expression
$w=rs\dots t$, we have $\mathrm{C}_w=\mathrm{C}_t\dots
\mathrm{C}_s\mathrm{C}_r$ and $\mathrm{K}_w=\mathrm{K}_r
\mathrm{K}_s\dots\mathrm{K}_t$. Also, we have
$\mathrm{K}_w=\star\circ\mathrm{C}_w\circ\star$.
We denote by $L$ the direct sum of all simple
modules in $\mathcal{O}_0$.

\begin{proposition}\label{props10-21}
For $x,y\in W$, the projective dimension of $\mathrm{C}_x P_y$
coincides with the maximal $k\geq 0$ such that 
$[\mathcal{L}_k\mathrm{C}_x\, L:L_y]\neq 0$.
\end{proposition}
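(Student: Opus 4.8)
The plan is to express the projective dimension of $\mathrm{C}_x P_y$ via $\Ext$-vanishing against simple modules and then transport this, using the derived self-equivalence $\mathcal{L}\mathrm{C}_x$ together with its right adjoint $\mathcal{R}\mathrm{K}_x$, into a statement about the composition factors of the cohomology of $\mathcal{L}\mathrm{C}_x$ applied to the simple cogenerator. Concretely, since the category has finite global dimension, $\mathrm{proj.dim}(\mathrm{C}_x P_y)$ equals the maximal $k$ with $\Ext^k_{\mathcal{O}}(\mathrm{C}_x P_y, L_z)\neq 0$ for some $z\in W$. The first observation is that, because $\mathrm{C}_x$ is exact on projectives (indeed $\mathrm{C}_x P_y$ has a projective resolution obtained by applying $\mathrm{C}_x$ to a suitable short resolution, as in Proposition~\ref{prop9.n1}), the object $\mathrm{C}_x P_y$ is already a module, so $\Ext^k_{\mathcal{O}}(\mathrm{C}_x P_y, L_z)=\Hom_{\mathcal{D}^b(\mathcal{O})}(\mathrm{C}_x P_y, L_z[k])$.

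Next I would apply the inverse derived equivalence. Since $\mathcal{L}\mathrm{C}_x$ is an equivalence of $\mathcal{D}^b(\mathcal{O}_0)$ with right adjoint $\mathcal{R}\mathrm{K}_x$, we have
\begin{displaymath}
\Hom_{\mathcal{D}^b(\mathcal{O})}(\mathrm{C}_x P_y, L_z[k])\cong
\Hom_{\mathcal{D}^b(\mathcal{O})}(P_y, \mathcal{R}\mathrm{K}_x L_z[k]).
\end{displaymath}
Because $P_y$ is projective, the right-hand side is simply the multiplicity $[\mathcal{L}_k(\mathcal{R}\mathrm{K}_x L_z) : L_y]$ (up to taking the $-k$-th cohomology, i.e. $\mathcal{R}^k\mathrm{K}_x$). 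So $\mathrm{proj.dim}(\mathrm{C}_x P_y)$ equals the maximal $k$ such that $[\mathcal{R}^k\mathrm{K}_x\, L_z : L_y]\neq 0$ for some $z$; summing over $z$, this is the maximal $k$ with $[\mathcal{R}^k\mathrm{K}_x\, L : L_y]\neq 0$ where $L$ is the sum of all simples.

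It remains to pass from coshuffling to shuffling. Using $\mathrm{K}_x=\star\circ\mathrm{C}_x\circ\star$ and the fact that $\star$ is an exact simple-preserving duality, one has $[\mathcal{R}^k\mathrm{K}_x\,L:L_y]=[\mathcal{L}_k\mathrm{C}_x\,L:L_y]$: applying $\star$ turns the injective-type resolution computing $\mathcal{R}\mathrm{K}_x$ into a projective-type resolution computing $\mathcal{L}\mathrm{C}_x$, it fixes $L$ (each simple is self-dual), and it fixes each composition-factor multiplicity $[\,\cdot\,:L_y]$ since $\star L_y\cong L_y$. Combining the two displayed reductions gives exactly the asserted equality with $[\mathcal{L}_k\mathrm{C}_x\,L:L_y]$.

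The main obstacle I anticipate is the careful bookkeeping of the adjunction on the derived level: one must be sure that $\mathcal{L}\mathrm{C}_x$ is genuinely an equivalence with $\mathcal{R}\mathrm{K}_x$ as (both left and right) adjoint, which requires invoking the results of \cite{MS} on shuffling and coshuffling functors, and that the homological indexing (cohomological degree of $\mathcal{R}\mathrm{K}_x$ versus homological degree of $\mathcal{L}\mathrm{C}_x$) matches up correctly under $\star$. The identity $\mathrm{K}_w=\star\circ\mathrm{C}_w\circ\star$ recalled just before the statement is the key input that makes the last step work; everything else is a formal consequence of $P_y$ being projective and $\mathcal{L}\mathrm{C}_x$ being an equivalence.
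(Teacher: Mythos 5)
Your proposal is correct and follows essentially the same route as the paper: express $\mathrm{proj.dim}(\mathrm{C}_x P_y)$ via $\mathrm{Ext}$ against simples, identify $\mathrm{C}_x P_y$ with $\mathcal{L}\mathrm{C}_x P_y$, move the equivalence across the Hom via its adjoint/inverse $\mathcal{R}\mathrm{K}_x$, use projectivity of $P_y$ to read off composition multiplicities of $\mathcal{R}^k\mathrm{K}_x L$, and finally convert to $\mathcal{L}_k\mathrm{C}_x L$ using $\mathrm{K}_x=\star\circ\mathrm{C}_x\circ\star$ and the fact that $\star$ fixes simples. The only cosmetic difference is that you make explicit the acyclicity of $\mathrm{C}_x$ on $P_y$ (which the paper uses silently), though the cleaner justification is acyclicity of shuffling functors on modules with Verma flags rather than ``exactness on projectives''.
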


\begin{proof}
The projective dimension of a module coincides with the maximal
degree of a non-vanishing extension to a simple module.
Since $\mathcal{L}\mathrm{C}_x$ is a derived equivalence with
inverse $\mathcal{R}\mathrm{K}_x$ by \cite[Theorem~5.7]{MS},
for $i\geq 0$, we have
\begin{displaymath}
\begin{array}{rcl} 
\dim \mathrm{Ext}_{\mathcal{O}}^i(\mathrm{C}_x P_y,L)&=&
\dim \mathrm{Hom}_{\mathcal{D}^b(\mathcal{O})}(\mathrm{C}_x P_y,L[i])\\
&=&\dim \mathrm{Hom}_{\mathcal{D}^b(\mathcal{O})}
(\mathcal{L}\mathrm{C}_x P_y,L[i])\\
&=&\dim \mathrm{Hom}_{\mathcal{D}^b(\mathcal{O})}
(P_y,\mathcal{R}\mathrm{K}_x L[i])\\
&=& [\mathcal{R}^i\mathrm{K}_x\, L:L_y]\\
&=& [\mathcal{L}_i\mathrm{C}_x\, L^\star:L_y^\star]\\
&=& [\mathcal{L}_i\mathrm{C}_x\, L:L_y]
\end{array}
\end{displaymath}
and the claim follows.
\end{proof}

\subsection{Projective dimension of shuffled tiltings}\label{s10.4}

The results of Subsection~\ref{s9.4} motivate the problem to determine
the projective dimension of shuffled projective modules in $\mathcal{O}$.

\begin{problem}\label{probs10-45}
For $x,y\in W$, determine the projective dimension of the
module $\mathrm{C}_x T_y$.
\end{problem}

This problems looks much harder than the one for the 
twisted tilting modules, mostly because twisting functors
do not commute with projective functors, in general.

Here are some basic observations about this problem:
\begin{itemize}
\item If $x=e$, the module $\mathrm{C}_e T_y$ is tilting
and hence the answer is $\mathbf{a}(y)$, see \cite{Ma3,Ma4}.
\item If $x=w_0$, the module $\mathrm{C}_{w_0} T_y$ is 
the injective module $I_{yw_0}$. In fact, we have
\[\mathrm{C}_{w_0} T_y \cong\mathrm{C}_{w_0} \top_{w_0} P_{w_0y}\cong \top_{w_0}\mathrm{C}_{w_0} P_{w_0y} \cong \top_{w_0}T_{w_0yw_0}\cong I_{yw_0}.\]
Hence the answer is 
$2\mathbf{a}(w_0yw_0)$ by \cite{Ma3,Ma4}.
\item If $y=e$, we have $\mathrm{C}_x T_e\cong \mathrm{C}_{x}\top_{w_0} P_{w_0} \cong \top_{w_0}\mathrm{C}_{x} P_{w_0} \cong \top_{w_0}P_{w_0} 
\cong P_{w_0}$ 
and the answer is $0$.
\item If $y=w_0$, we have $\mathrm{C}_x T_{w_0}\cong \mathrm{C}_{x} \nabla_{w_0} \cong\nabla_{w_0x}$
and the answer is $\ell(w_0)+\ell(x)$, see \cite{Ma3}. 
\item The projective dimension of  $\mathrm{C}_x T_y$
is at most $\ell(x)+\mathbf{a}(y)$, since the projective
dimension of $T_y$ is $\mathbf{a}(y)$ by \cite{Ma3,Ma4}
and each $\mathrm{C}_s$, where
$s$ is a simple reflection, has derived length $1$.
\item For $x=s$, a simple reflection, we have
$\mathrm{C}_s T_y\cong T_y$ if $ys>y$, in which case the answer
is $\mathbf{a}(y)$ by \cite{Ma3,Ma4}. 
In case $ys<y$, the module $\mathrm{C}_s P_y$ is
no longer tilting and the answer is $\mathbf{a}(y)+1$
because the minimal tilting resolution of $\mathrm{C}_s P_y$
has $T_y$ in position $-1$.
\end{itemize}

In the spirit of Subsection~\ref{s10.3}, we make the
following conjecture:

\begin{conjecture}\label{conj10-451}
For $x,y\in W$, we have
$\mathrm{proj.dim}\,\mathrm{C}_x T_y=\mathbf{a}(y)+
\mathrm{proj.dim}\,\mathrm{C}_x P_{w_0y}$.
\end{conjecture}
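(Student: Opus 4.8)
\emph{Overall plan.} I would prove the two inequalities separately, closely following the strategy of Proposition~\ref{prop10-152} and Corollary~\ref{cor10-153}: the bound ``$\le$'' should be completely analogous and unconditional, whereas ``$\ge$'' will require a non-vanishing input of the same kind as the one in Corollary~\ref{cor10-153}.

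\emph{Key reduction.} The first step is to record the identity
\[
\mathrm{C}_x T_y\;\cong\;\top_{w_0}\bigl(\mathrm{C}_x P_{w_0 y}\bigr),
\]
which holds because $T_y\cong\top_{w_0}P_{w_0 y}$, because twisting by $w_0$ commutes with shuffling functors (exactly as used in Subsection~\ref{s10.4}), and because $\mathrm{C}_x P_{w_0 y}$ has a Verma flag (shuffling functors preserve $\mathcal{F}(\Delta)$, see \cite{MS}), so that $\top_{w_0}$ is acyclic on it and both composites $\mathcal{L}\top_{w_0}\mathcal{L}\mathrm{C}_x P_{w_0 y}$ and $\mathcal{L}\mathrm{C}_x\mathcal{L}\top_{w_0}P_{w_0 y}$ are concentrated in homological degree $0$. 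Put $k:=\mathrm{proj.dim}\,\mathrm{C}_x P_{w_0 y}$.

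\emph{Upper bound.} Let $\mathcal{P}_\bullet$ be a minimal projective resolution of $\mathrm{C}_x P_{w_0 y}$. Applying the acyclic functor $\top_{w_0}$, and using $\top_{w_0}P_v\cong T_{w_0 v}$, turns $\mathcal{P}_\bullet$ into a tilting resolution of $\mathrm{C}_x T_y$ of length $k$ built from the modules $T_{w_0 v}$ with $P_v$ occurring in $\mathcal{P}_\bullet$. Splicing in minimal projective resolutions of those $T_{w_0 v}$, and using $\mathrm{proj.dim}\,T_{w_0 v}=\mathbf{a}(w_0 v)$ from \cite{Ma3,Ma4}, one obtains $\mathrm{proj.dim}\,\mathrm{C}_x T_y\le k+\max_v\mathbf{a}(w_0 v)$; so it is enough to show $\mathbf{a}(w_0 v)\le\mathbf{a}(y)$ for every such $v$. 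By the computation underlying Proposition~\ref{props10-21}, $P_v$ occurs in $\mathcal{P}_\bullet$ if and only if $L_{w_0 y}$ is a composition factor of $\mathcal{L}_i\mathrm{C}_x L_v$ for some $i$. Here I would invoke the observation from the proof of Proposition~\ref{prop24} — every composition factor $L_u$ of $\theta L_w$, for a projective functor $\theta$, satisfies $\mathbf{a}(w_0 u)\ge\mathbf{a}(w_0 w)$ — together with the two-term complex $\mathcal{L}\mathrm{C}_s L_w\cong[\,L_w\to\theta_s L_w\,]$ and an induction on $\ell(x)$, to conclude that every composition factor $L_u$ of any $\mathcal{L}_i\mathrm{C}_x L_v$ satisfies $\mathbf{a}(w_0 u)\ge\mathbf{a}(w_0 v)$. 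Taking $u=w_0 y$ gives $\mathbf{a}(w_0 v)\le\mathbf{a}(w_0(w_0 y))=\mathbf{a}(y)$, completing the upper bound $\mathrm{proj.dim}\,\mathrm{C}_x T_y\le\mathbf{a}(y)+k$.

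\emph{Lower bound and main obstacle.} For the reverse inequality I would produce a simple module $L_z$ with $\mathrm{Ext}^{\,k+\mathbf{a}(y)}_{\mathcal{O}}(\mathrm{C}_x T_y,L_z)\neq 0$. Among the $P_v$ in homological position $-k$ of $\mathcal{P}_\bullet$ I would pick one with $\mathbf{a}(w_0 v)=\mathbf{a}(y)$, so that the corresponding $T_{w_0 v}$ in that position of $\top_{w_0}\mathcal{P}_\bullet$ has projective dimension $\mathbf{a}(y)$; then, just as in the proofs of \cite[Theorem~1]{MO2} and of Corollary~\ref{cor10-153} (and using, from \cite[Section~6]{KMM}, that $T_{w_0 v}$ appears in the top homological position of a linear tilting complex representing a suitable $L_z$), the identity on $T_{w_0 v}$ should assemble into a chain map which is not null-homotopic, yielding the required non-zero extension. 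The one input this argument needs — and the place where I expect the real difficulty — is the existence of such a $v$, namely of $v\in W$ with $\mathbf{a}(w_0 v)=\mathbf{a}(y)$ and $\mathrm{Ext}^{k}_{\mathcal{O}}(\mathrm{C}_x P_{w_0 y},L_v)\neq 0$; this is the shuffled analogue of \cite[Conjecture~1.3]{KMM} and of the hypothesis of Corollary~\ref{cor10-153}. In the twisted world it is within reach because the projectives in a minimal resolution of $\theta_x\Delta_{w_0 y}$ are direct summands of various $\theta_x P_w$, hence have a transparent Kazhdan--Lusztig-cell description; for $\mathrm{C}_x P_{w_0 y}$ no such description is available, since shuffling functors do not commute with projective functors — precisely the obstruction flagged around Problem~\ref{probs10-2} — so pinning down the top of the minimal projective resolution of $\mathrm{C}_x P_{w_0 y}$, and in particular locating a suitable $v$, would require a substantially finer analysis of $\mathcal{L}\mathrm{C}_x L$, presumably along the lines of \cite[Section~6]{KMM}.
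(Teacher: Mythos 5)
There is a fundamental gap, and it is one you partly acknowledge yourself: the statement you are asked to prove is stated in the paper only as a conjecture (Conjecture~\ref{conj10-451}), and the paper does not prove it either. What the paper offers is exactly the two partial results your proposal reproduces: the unconditional upper bound $\mathrm{proj.dim}\,\mathrm{C}_x T_y\leq \mathbf{a}(y)+\mathrm{proj.dim}\,\mathrm{C}_x P_{w_0y}$ (Proposition~\ref{prop10-452}) and a \emph{conditional} lower bound (Corollary~\ref{cor10-453}) whose hypothesis --- the existence of $v$ with $v\sim_\mathtt{R} w_0y$ and $\mathrm{Ext}^k_{\mathcal{O}}(\mathrm{C}_x P_{w_0y},L_v)\neq 0$ --- is itself only suspected to hold always, being a shuffled analogue of \cite[Conjecture~1.3]{KMM}. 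Your lower-bound step rests on precisely this unproven input, so your proposal does not establish the conjecture; it only recovers the evidence already in Section~\ref{s10}. Note also that your version of the hypothesis is weaker than the paper's: you ask only for $\mathbf{a}(w_0v)=\mathbf{a}(y)$, whereas the argument via \cite[Section~6]{KMM} (locating $T_{w_0v}$ in the top position of a linear tilting complex of some simple, as in the proof of Corollary~\ref{cor10-153}) really uses the cell condition $v\sim_\mathtt{R} w_0y$, so even the conditional statement as you phrase it would need repair.

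On the part that can be proved, your upper bound is correct and matches Proposition~\ref{prop10-452} in outline (apply $\top_{w_0}$ to a minimal projective resolution of $\mathrm{C}_x P_{w_0y}$, then resolve the resulting tiltings), but you bound the relevant $\mathbf{a}$-values differently: you argue via composition factors of $\mathcal{L}_i\mathrm{C}_x L_v$, using the two-term complex for $\mathcal{L}\mathrm{C}_s$ and the fact that projective functors cannot decrease $\mathbf{a}(w_0\,\text{-})$ on composition factors, whereas the paper observes directly that every $P_v$ in the resolution of $\mathrm{C}_x P_{w_0y}$ is a summand of $\theta P_{w_0y}$ for a projective functor $\theta$ (by gluing the resolutions $0\to P_w\to\theta_s P_w\to \mathrm{C}_sP_w\to 0$ along a reduced expression of $x$), giving $v\geq_\mathtt{R} w_0y$ and hence $\mathbf{a}(w_0v)\leq\mathbf{a}(y)$. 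The paper's route is slightly sharper, since it yields the right-order statement and not merely the inequality of $\mathbf{a}$-values, but your variant is sound for the bound. The conjecture itself, however, remains open in both your account and the paper.
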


Below we present some evidence for 
Conjecture~\ref{conj10-451}.

\begin{proposition}\label{prop10-452}
For $x,y\in W$, we have
$\mathrm{proj.dim}\,\mathrm{C}_x T_y\leq \mathbf{a}(y)+
\mathrm{proj.dim}\,\mathrm{C}_x P_{w_0y}$.
\end{proposition}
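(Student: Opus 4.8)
The plan is to imitate, essentially verbatim, the proof of Proposition~\ref{prop10-152}, with the pair $(\theta_x\Delta_{w_0y},\theta_x\nabla_y)$ replaced by $(\mathrm{C}_xP_{w_0y},\mathrm{C}_xT_y)$. The starting point is the isomorphism $\mathrm{C}_xT_y\cong\top_{w_0}\mathrm{C}_xP_{w_0y}$: indeed $T_y\cong\top_{w_0}P_{w_0y}$, and since the derived functors $\mathcal{L}\top_{w_0}$ and $\mathcal{L}\mathrm{C}_x$ commute while each is acyclic on the module to which it is applied below — $\top_{w_0}$ on modules with a Verma flag and $\mathrm{C}_x$ on projectives and on tiltings (see Subsection~\ref{s9.3}) — one gets $\mathcal{L}\top_{w_0}(\mathrm{C}_xP_{w_0y})\cong\mathcal{L}\mathrm{C}_x(\mathcal{L}\top_{w_0}P_{w_0y})\cong\mathcal{L}\mathrm{C}_xT_y\cong\mathrm{C}_xT_y$, concentrated in degree $0$. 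In particular $\top_{w_0}$ is acyclic on $\mathrm{C}_xP_{w_0y}$, so if $\mathcal{P}_\bullet$ is a minimal projective resolution of $\mathrm{C}_xP_{w_0y}$ of length $k:=\mathrm{proj.dim}\,\mathrm{C}_xP_{w_0y}$, then $\top_{w_0}\mathcal{P}_\bullet$ is a resolution of $\mathrm{C}_xT_y$ by tilting modules whose $(-i)$-th term is $\bigoplus T_{w_0v}$, taken over the indecomposable summands $P_v$ of $\mathcal{P}_{-i}$.

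First I would then replace each indecomposable tilting $T_{w_0v}$ occurring in this resolution by a minimal projective resolution of it, of length $\mathrm{proj.dim}\,T_{w_0v}=\mathbf{a}(w_0v)$ by \cite{Ma3,Ma4}, and splice, obtaining a projective resolution of $\mathrm{C}_xT_y$ together with the bound
\[
\mathrm{proj.dim}\,\mathrm{C}_xT_y\ \le\ k+\max\{\,\mathbf{a}(w_0v)\ :\ P_v\text{ occurs in }\mathcal{P}_\bullet\,\}.
\]
Thus the proposition reduces to the assertion that $\mathbf{a}(w_0v)\le\mathbf{a}(y)$ for every indecomposable projective $P_v$ occurring in the minimal projective resolution of $\mathrm{C}_xP_{w_0y}$. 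For this I would argue as in Proposition~\ref{props10-21}: transporting $\mathrm{Ext}$'s along the derived self-equivalence $\mathcal{L}\mathrm{C}_x$ and the simple preserving duality yields
\[
[\mathcal{P}_{-i}:P_v]=\dim\mathrm{Ext}^i_{\mathcal{O}}(\mathrm{C}_xP_{w_0y},L_v)=[\mathcal{L}_i\mathrm{C}_x\,L_v:L_{w_0y}],
\]
so it suffices to show: if $L_{w_0y}$ is a composition factor of $\mathcal{L}_j\mathrm{C}_xL_v$ for some $j\ge 0$, then $\mathbf{a}(w_0v)\le\mathbf{a}(y)$, equivalently $w_0v\geq_{\mathtt{J}}y$ in the two-sided order.

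The hard part will be precisely this last reduction. In the twisted analogue the corresponding statement was free, because twisting commutes with projective functors: the entire minimal projective resolution of $\theta_x\Delta_{w_0y}$ lies in $\operatorname{add}(\theta_xP)$, which forces $v\geq_{\mathtt{L}}x$, hence $w_0v\leq_{\mathtt{L}}w_0x$ and $\mathbf{a}(w_0v)\le\mathbf{a}(w_0x)$. No such structural shortcut is available for shuffled modules, so one must instead understand the composition factors of the derived shuffled simple module $\mathcal{L}\mathrm{C}_xL_v$ directly and locate them in the two-sided order — for instance by computing $[\mathcal{L}\mathrm{C}_xL_v]$ in the Hecke algebra (where $\mathcal{L}\mathrm{C}_x$ acts by right multiplication by $H_x$) or, via Koszul self-duality, by identifying $\mathcal{L}\mathrm{C}_xL_v$ with a twisted Verma-type object $\top_{x^{-1}}\Delta_{w_0v^{-1}w_0}$ and reading off its constituents from Kazhdan--Lusztig combinatorics, in each case matching the constraint coming from $L_{w_0y}$ against the monotonicity of Lusztig's $\mathbf{a}$-function along the two-sided order. (Alternatively one may apply $\mathrm{C}_x$ directly to a minimal projective resolution $\mathcal{Q}_\bullet$ of the $\mathrm{C}_x$-acyclic module $T_y$, obtaining a length-$\mathbf{a}(y)$ resolution of $\mathrm{C}_xT_y$ by modules in $\operatorname{add}(\mathrm{C}_xP)$ and reducing instead to $\mathrm{proj.dim}\,\mathrm{C}_xP_v\le\mathrm{proj.dim}\,\mathrm{C}_xP_{w_0y}$ for the few, well-understood $P_v$ occurring in $\mathcal{Q}_\bullet$ — which by Corollary~\ref{cor2} at least satisfy $\mathbf{a}(w_0v)\le\mathbf{a}(y)$ — but the difficulty there is of the same nature.)
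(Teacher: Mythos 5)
Your first half coincides with the paper's argument: the identification $\mathrm{C}_xT_y\cong\top_{w_0}\mathrm{C}_xP_{w_0y}$ via commutation of twisting and shuffling, applying $\top_{w_0}$ to a minimal projective resolution $\mathcal{P}_\bullet$ of $\mathrm{C}_xP_{w_0y}$ to obtain a tilting resolution of $\mathrm{C}_xT_y$, splicing in projective resolutions of the tilting terms, and thereby reducing the proposition to the inequality $\mathbf{a}(w_0v)\leq\mathbf{a}(y)$ for every indecomposable $P_v$ occurring in $\mathcal{P}_\bullet$. But at that point you declare this inequality to be the ``hard part'', assert that no structural shortcut is available for shuffled modules, and offer only unexecuted sketches (Hecke-algebra characters, a Koszul-duality identification of $\mathcal{L}\mathrm{C}_xL_v$). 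So the proposal is incomplete exactly at the step that carries all the content; transporting $\mathrm{Ext}$'s along $\mathcal{L}\mathrm{C}_x$ to the statement $[\mathcal{L}_i\mathrm{C}_xL_v:L_{w_0y}]\neq 0\Rightarrow \mathbf{a}(w_0v)\leq\mathbf{a}(y)$ is a reformulation, not a proof. (Also, your parenthetical ``equivalently $w_0v\geq_{\mathtt{J}}y$'' is not an equivalence: comparability in the two-sided order is sufficient for the $\mathbf{a}$-inequality, not necessary, and with the order conventions used in this paper the relevant comparison goes the other way.)

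The missing point is elementary and is precisely the structural shortcut you say does not exist. For any $w\in W$ and simple reflection $s$ there is an exact sequence $0\to P_w\to\theta_sP_w\to\mathrm{C}_sP_w\to 0$, i.e.\ a two-term projective resolution of $\mathrm{C}_sP_w$ whose terms are summands of $\theta P_w$ for projective functors $\theta$. Since $\mathrm{C}_s$ preserves modules with a Verma flag and is acyclic on them (see \cite[Section~5]{MS}), one can glue such resolutions inductively along a reduced expression of $x$ and obtain a (possibly non-minimal) projective resolution of $\mathrm{C}_xP_{w_0y}$ all of whose indecomposable summands are summands of $\theta P_{w_0y}$ for various projective functors $\theta$. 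As the minimal resolution $\mathcal{P}_\bullet$ is a direct summand of any projective resolution, every $P_v$ occurring in it satisfies $v\geq_{\mathtt{R}}w_0y$, hence $w_0v\leq_{\mathtt{R}}y$ and $\mathbf{a}(w_0v)\leq\mathbf{a}(y)$, and your splicing bound then gives $\mathrm{proj.dim}\,\mathrm{C}_xT_y\leq \mathbf{a}(y)+\mathrm{proj.dim}\,\mathrm{C}_xP_{w_0y}$, exactly as in Proposition~\ref{prop10-152}. So the route you chose is the paper's route; what is missing is this one concrete observation about how projective resolutions of shuffled projectives are built from the sequences $0\to P_w\to\theta_sP_w\to\mathrm{C}_sP_w\to 0$, which replaces the commutation-with-projective-functors argument used in the twisted case.
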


\begin{proof}
Assume that $\mathrm{proj.dim}\,\mathrm{C}_x P_{w_0y}=k$ and
let $\mathcal{P}_{\bullet}$ be a minimal projective resolution 
of $\mathrm{C}_x P_{w_0y}$. Applying $\top_{w_0}$ to 
$\mathcal{P}_{\bullet}$, and using that twisting and shuffling
functors commute (e.g. because twisting functors commute with
projective functors and natural transformations between them and
shuffling functors are defined in terms of (co)kernels of such natural
transformations), we get a minimal tilting resolution 
of $\mathrm{C}_x T_y$ (of length $k$). To obtain a projective
resolution of $\mathrm{C}_x T_y$, we need to projectively resolve
each indecomposable tilting module $T_u$ appearing in
$\top_{w_0}\mathcal{P}_{\bullet}$ and glue all these resolutions
together. In particular, $\mathrm{proj.dim}\, \mathrm{C}_xT_y$ 
is bounded by $k$ plus the maximal value of $\mathrm{proj.dim}\, T_u$,
for $T_u$ appearing in $\top_{w_0}\mathcal{P}_{\bullet}$.

Note that a projective resolution of $\mathrm{C}_sP_{w}$, for any $w\in W$ and $s\in S$, has the following form: $0\to P_w\to\theta_x P_w\to0$ and a projective resolution of $\mathrm{C}_xP_{w_0y}$ is obtained by gluing such resolutions inductively along a reduced decomposition of $x$. Thus, an indecomposable projective $P_v$ appearing in
$\mathcal{P}_{\bullet}$  is 
a summand of $\theta P_{w_0y}$ for some projective functor $\theta$ and satisfies $v\geq_\mathtt{R} w_0y$. 
Therefore,
$u=w_0v$ satisfies $u\leq_\mathtt{R} y$. In particular, we have
$\mathbf{a}(u)\leq \mathbf{a}(y)$. By \cite{Ma3,Ma4},
the projective dimension of $T_u$ equals $\mathbf{a}(u)$.
The claim of  the proposition follows.
\end{proof}

\begin{corollary}\label{cor10-453}
For $x,y\in W$, let $\mathrm{proj.dim}\,\mathrm{C}_x P_{w_0y}=k$.
Assume that there exists $v\in W$ such that
$v\sim_\mathtt{R} w_0y$ and 
$\mathrm{Ext}^k_{\mathcal{O}}(\mathrm{C}_x P_{w_0y},L_v)\neq 0$.
Then $\mathrm{proj.dim}\, \mathrm{C}_x T_y=\mathbf{a}(y)+
\mathrm{proj.dim}\,\mathrm{C}_x P_{w_0y}$.
\end{corollary}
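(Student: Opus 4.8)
The plan is to get the claimed equality by combining Proposition~\ref{prop10-452}, which already gives $\mathrm{proj.dim}\,\mathrm{C}_xT_y\le\mathbf{a}(y)+k$, with a matching lower bound produced exactly as in the proof of Corollary~\ref{cor10-153}. First I would unpack the hypothesis: since $\mathcal{P}_\bullet$ is a \emph{minimal} projective resolution of $\mathrm{C}_xP_{w_0y}$, the condition $\mathrm{Ext}^k_{\mathcal{O}}(\mathrm{C}_xP_{w_0y},L_v)\neq 0$ says precisely that $P_v$ is a direct summand of the top term $\mathcal{P}_{-k}$. Applying $\top_{w_0}$ to $\mathcal{P}_\bullet$ — which, as in the proof of Proposition~\ref{prop10-452}, yields a minimal tilting resolution of $\mathrm{C}_xT_y$ of length $k$, because twisting and shuffling functors commute, $\mathcal{L}\top_{w_0}$ is acyclic on Verma-flag modules and $\top_{w_0}P_u\cong T_{w_0u}$ — we see that $T_{w_0v}$ is a direct summand of the $(-k)$-th term of $\top_{w_0}\mathcal{P}_\bullet$. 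Moreover, from $v\sim_\mathtt{R}w_0y$ and the fact that left multiplication by $w_0$ reverses the right Kazhdan--Lusztig order, we get $w_0v\sim_\mathtt{R}y$, hence $\mathbf{a}(w_0v)=\mathbf{a}(y)$ and, by \cite{Ma3,Ma4}, $\mathrm{proj.dim}\,T_{w_0v}=\mathbf{a}(y)$.

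The next step is to manufacture a non-zero degree-$(\mathbf{a}(y)+k)$ extension from $\mathrm{C}_xT_y$ to a simple module. By the results of \cite[Section~6]{KMM}, there is $w\in W$ such that the indecomposable tilting module $T_{w_0v}$ occurs in homological position $\mathbf{a}(w_0v)=\mathbf{a}(y)$ of the minimal (linear, by Koszul self-duality) tilting complex $\mathcal{T}_\bullet$ representing $L_w$, and this position is the maximal non-zero one in $\mathcal{T}_\bullet$. Thus $T_{w_0v}$ is at the same time a summand of $\top_{w_0}\mathcal{P}_{-k}$ and of $\mathcal{T}_{\mathbf{a}(y)}$; arguing as in the proof of \cite[Theorem~1]{MO2}, the identity map on $T_{w_0v}$ induces a map $\top_{w_0}\mathcal{P}_\bullet\to\mathcal{T}_\bullet[\mathbf{a}(y)+k]$ in the homotopy category which is not null-homotopic. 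This yields $\mathrm{Ext}^{\mathbf{a}(y)+k}_{\mathcal{O}}(\mathrm{C}_xT_y,L_w)\neq 0$, hence $\mathrm{proj.dim}\,\mathrm{C}_xT_y\ge\mathbf{a}(y)+k$, and together with Proposition~\ref{prop10-452} this gives the asserted equality.

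The hard part will be the verification that the induced chain map is genuinely non-zero in the homotopy category. One must check that no homotopy corrections are available at the positions neighbouring $-k$, and that contributions coming from other indecomposable tilting summands of $\top_{w_0}\mathcal{P}_{-k}$ of projective dimension $\mathbf{a}(y)$ cannot cancel the one coming from $T_{w_0v}$; this is exactly where one needs the structural input of \cite[Section~6]{KMM} — that $T_{w_0v}$ sits in an extremal ``diagonal'' position of the linear complex $\mathcal{T}_\bullet$ — together with the argument template of \cite[Theorem~1]{MO2}. Apart from this point, the proof is a direct transcription of the proof of Corollary~\ref{cor10-153}, with the left Kazhdan--Lusztig order used there replaced throughout by the right order.
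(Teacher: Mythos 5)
Your proposal is correct and follows exactly the route the paper intends: the paper's proof of this corollary is literally ``apply Proposition~\ref{prop10-452} and repeat the argument of Corollary~\ref{cor10-153}'', and your write-up is that argument transcribed to the shuffled setting, including the right translation of the cell hypothesis (left multiplication by $w_0$ reverses $\leq_{\mathtt{R}}$, so $w_0v\sim_{\mathtt{R}}y$ and $\mathrm{proj.dim}\,T_{w_0v}=\mathbf{a}(w_0v)=\mathbf{a}(y)$) and the same appeal to \cite[Section~6]{KMM} and the \cite[Theorem~1]{MO2}-style non-nullhomotopy argument for the lower bound.
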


\begin{proof}

Follows from Proposition \ref{prop10-452} by 
a line of arguments analogous to the ones in the proof of Corollary \ref{cor10-153}.

%
%
\end{proof}

Again, we suspect that the above assumption ``there exists $v\in W$ such that
$v\sim_\mathtt{R} w_0y$ and 
$\mathrm{Ext}^k_{\mathcal{O}}(\mathrm{C}_x P_{w_0y},L_v)\neq 0$''
in Corollary~\ref{cor10-453}  is always
satisfied. 

\vspace{2mm}

\noindent
H.~K.: Department of Mathematics, Uppsala University, Box. 480,
SE-75106, Uppsala,\\ SWEDEN, email: {\tt hankyung.ko\symbol{64}math.uu.se}

\noindent
V.~M.: Department of Mathematics, Uppsala University, Box. 480,
SE-75106, Uppsala,\\ SWEDEN, email: {\tt mazor\symbol{64}math.uu.se}

\noindent
R.~M.: Department of Mathematics, Uppsala University, Box. 480,
SE-75106, Uppsala,\\ SWEDEN, email: {\tt rafael.mrden\symbol{64}math.uu.se}

\end{document}